\theoremstyle{plain}
\newtheorem{thm}{Theorem}[section]
\newtheorem{prop}[thm]{Proposition}
\newtheorem{lem}[thm]{Lemma}
\newtheorem*{thm*}{Theorem}
\newtheorem*{conj*}{Conjecture}
\newtheorem*{prop*}{Proposition}
\newtheoremstyle{narrow}
  {.5em} % Space above
  {.5em} % Space below
  {\itshape} % Body font
  {} % Indent amount
  {\bfseries} % Theorem head font
  {.} % Punctuation after theorem head
  {.5em} % Space after theorem head
  {} % Theorem head spec (can be left empty, meaning `normal')
\theoremstyle{narrow}
\newtheorem*{thn*}{Theorem}
\newtheorem*{conjn*}{Conjecture}
\theoremstyle{definition}
\newtheorem{defi}[thm]{Definition}
\newtheorem*{nota*}{Notation}
\newtheorem{rem}[thm]{Remark}
\newcommand{\Q}{\mathbb{Q}}
\newcommand{\Z}{\mathbb{Z}}
\newcommand{\Ns}{\mathbb{Z}_{>0}}
\newcommand{\N}{\mathbb{Z}_{\geq0}}
\newcommand{\C}{\mathbb{C}}
\newcommand{\R}{\mathbb{R}}
\renewcommand{\H}{\mathbb{H}}
\newcommand{\tr}{\operatorname{tr}}
\renewcommand{\i}{\mathrm{i}}
\newcommand{\e}{\mathrm{e}}
\newcommand{\Aut}{\operatorname{Aut}}
\newcommand{\Hom}{\operatorname{Hom}}
\newcommand{\rk}{\operatorname{rk}}
\newcommand{\voa}{vertex operator algebra}
\newcommand{\VOA}{Vertex Operator Algebra}
\newcommand{\vosa}{vertex operator subalgebra}
\newcommand{\fpvosa}{fixed-point vertex operator subalgebra}
\newcommand{\vac}{\textbf{1}}
\newcommand{\id}{\operatorname{id}}
\newcommand{\amgis}{\zeta}
\newcommand{\eps}{\varepsilon}
\newcommand{\lcm}{\operatorname{lcm}}
\newcommand{\ee}{\mathfrak{e}}
\newcommand{\g}{\mathfrak{g}}
\newcommand{\hh}{\mathfrak{h}}
\newcommand{\h}{{L_\C}}
\newcommand{\ad}{\operatorname{ad}}
\newcommand{\orb}{\operatorname{orb}}
\newcommand{\strathol}{strongly rational, holomorphic}
\newcommand{\strat}{strongly rational}
\renewcommand{\O}{\operatorname{O}}
\newcommand{\Co}{\operatorname{Co}}
\newcommand{\spn}{\operatorname{span}}
\newcommand{\gdh}{generalised deep hole}
\newcommand{\Gdh}{Generalised deep hole}
\newcommand{\GDH}{Generalised Deep Hole}
\newcommand{\Com}{\operatorname{Com}}
\begin{document}

\title{Schellekens' List and the Very Strange Formula}
\author[J.\ van Ekeren, C.\ H.\ Lam, S.\ Möller and H.\ Shimakura]{Jethro van Ekeren\textsuperscript{\lowercase{a}}, Ching Hung Lam\textsuperscript{\lowercase{b}},\\ Sven Möller\textsuperscript{\lowercase{c}} and Hiroki Shimakura\textsuperscript{\lowercase{d}}}
\thanks{\textsuperscript{a}{Universidade Federal Fluminense, Niterói, RJ, Brazil}}
\thanks{\textsuperscript{b}{Academia Sinica, Taipei, Taiwan}}
\thanks{\textsuperscript{c}{Rutgers University, Piscataway, NJ, United States of America}}
\thanks{\textsuperscript{d}{Tohoku University, Sendai, Japan}}
\thanks{Email: \href{mailto:jethrovanekeren@gmail.com}{\nolinkurl{jethrovanekeren@gmail.com}}, \href{mailto:chlam@math.sinica.edu.tw}{\nolinkurl{chlam@math.sinica.edu.tw}}, \href{mailto:math@moeller-sven.de}{\nolinkurl{math@moeller-sven.de}}, \href{mailto:shimakura@tohoku.ac.jp}{\nolinkurl{shimakura@tohoku.ac.jp}}}

\begin{abstract}
In \cite{Sch93} (see also \cite{EMS20a}) Schellekens proved that the weight-one space $V_1$ of a \strathol{} \voa{} $V$ of central charge $24$ must be one of $71$ Lie algebras. During the following three decades, in a combined effort by many authors, it was proved that each of these Lie algebras is realised by such a \voa{} and that, except for $V_1=\{0\}$, this \voa{} is uniquely determined by $V_1$. Uniform proofs of these statements were given in \cite{MS19,HM20}.

In this paper we give a fundamentally different, simpler proof of Schellekens' list of $71$ Lie algebras. Using the dimension formula in \cite{MS19} and Kac's ``very strange formula'' \cite{Kac90} we show that every \strathol{} \voa{} $V$ of central charge $24$ with $V_1\neq\{0\}$ can be obtained by an orbifold construction from the Leech lattice \voa{} $V_\Lambda$. This suffices to restrict the possible Lie algebras that can occur as weight-one space of $V$ to the $71$ of Schellekens.

Moreover, the fact that each \strathol{} \voa{} $V$ of central charge $24$ comes from the Leech lattice $\Lambda$ can be used to classify these \voa{}s by studying properties of the Leech lattice. We demonstrate this for $43$ of the $70$ non-zero Lie algebras on Schellekens' list, omitting those cases that are too computationally expensive.
\end{abstract}

\maketitle

\setcounter{tocdepth}{1}
\tableofcontents

\section{Introduction}
The weight-one subspace $V_1$ of a \strathol{} \voa{} $V$ of central charge $24$ is a finite-dimensional, reductive Lie algebra \cite{DM04b}. More precisely, $V_1$ is either zero, $24$-dimensional abelian or semisimple $\g_1\oplus\ldots\oplus\g_r$ of rank at most $24$ \cite{DM04}. In $1993$ Schellekens \cite{Sch93} (see also \cite{EMS20a}) showed that there are at most $71$ possibilities for this Lie algebra by exploiting the modular invariance of the character of $V$ (see \autoref{table:69}).

By the works of many authors over three decades the following is now proved:
\begin{thn*}
Each potential Lie algebra on Schellekens' list is realised by a \strathol{} \voa{} of central charge $24$ and this \voa{} is uniquely determined by its $V_1$-structure if $V_1\neq\{0\}$.
\end{thn*}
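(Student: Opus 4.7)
The plan is to establish existence and uniqueness separately, both leveraging the connection to the Leech lattice \voa{} $V_\Lambda$.

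For existence, the strategy is to realise each of the $70$ non-zero Lie algebras $\g$ on Schellekens' list as $V_1$ for a cyclic orbifold of $V_\Lambda$. For each such $\g$, I would identify a finite-order automorphism $g\in\Aut(V_\Lambda)$ whose cyclic orbifold yields a \strathol{} \voa{} of central charge $24$ with weight-one Lie algebra $\g$. The key input is the dimension formula of \cite{MS19}, which expresses $\dim V_1$ of the orbifold in terms of the conformal weight of the $g$-twisted $V_\Lambda$-module. Together with Schellekens' level condition, which fixes the affine levels of the simple ideals of $\g$, this determines the required conjugacy class of $g$ in $\Aut(V_\Lambda)$. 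The candidate automorphisms are organised via the \gdh{}s of $V_\Lambda$, obtained by combining inner (translation) automorphisms with lifts of lattice automorphisms in $\O(\Lambda)=\Co_0$. The remaining case $V_1=\{0\}$ is realised by the moonshine module $V^\natural$.

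For uniqueness (with $V_1\neq\{0\}$), suppose $V$ and $V'$ are two \strathol{} \voa{}s of central charge $24$ with the same weight-one structure. If $V_1$ is semisimple, then $V$ is a simple-current extension of the affine \voa{} generated by $V_1$ at the levels fixed by Schellekens' condition. The allowed extensions are constrained by the modular data (fusion rules and conformal weights) of this affine subalgebra together with the holomorphicity requirement; working through these constraints shows that the extension is determined by $V_1$, so $V\cong V'$. In the remaining case where $V_1$ is abelian of dimension $24$, the Heisenberg subalgebra it generates forces $V$ to be a lattice \voa{} $V_N$ for an even unimodular lattice $N$ of rank $24$ without roots, and the Niemeier classification then gives $N=\Lambda$ and $V\cong V_\Lambda$.

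The main obstacle is the existence step: for each of the $70$ target Lie algebras one must exhibit an explicit automorphism class of $V_\Lambda$ realising it, and verify that the resulting cyclic orbifold has the prescribed $V_1$-structure. The range of possible orders of the relevant automorphisms is large, and the inner twist contributions can be intricate; Kac's very strange formula, applied to the conformal weight of the twisted sector, is the central tool that organises these classes and makes a uniform argument possible. For the semisimple uniqueness, the main subtlety is that the classification of simple-current extensions of affine \voa{}s at the relevant levels is delicate and typically requires case-by-case analysis for the exceptional ideals, which is the source of much of the historical difficulty before the uniform arguments of \cite{MS19,HM20}.
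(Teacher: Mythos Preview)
The theorem you address is not actually proved in this paper; it is stated in the introduction as a result accumulated over three decades and attributed to the literature, with uniform proofs cited from \cite{Hoe17,MS19,HM20}. The paper's own contributions in this direction are (i) a new proof that $V_1$ must lie on Schellekens' list (\autoref{thm:main2}), and (ii) a \emph{partial} uniqueness result covering $43$ of the $70$ non-zero cases (\autoref{thm:main3}).

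Your existence sketch matches the approach of \cite{MS19} that the paper cites: for each Lie algebra $\g$ on the list one exhibits a generalised deep hole $g\in\Aut(V_\Lambda)$ with $(V_\Lambda^{\orb(g)})_1\cong\g$. One small correction: in the present paper the very strange formula is used in the \emph{inverse} direction, to build the automorphism $\sigma_u$ of $V$ whose orbifold returns to $V_\Lambda$, rather than to locate $g$ on the $V_\Lambda$ side.

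Your uniqueness argument, however, diverges from the paper's method and contains a genuine gap. You assert that $V$ is a simple-current extension of the affine subalgebra $\langle V_1\rangle$, but in general the irreducible $\langle V_1\rangle$-modules appearing in the decomposition of $V$ are \emph{not} simple currents (already for $A_{1,2}^{16}$ the module $L_{\widehat{\sl}_2}(2,1)$ occurs), so this framework does not apply. Even granting Schellekens' result that the $\langle V_1\rangle$-module decomposition of $V$ is determined up to outer automorphism, one must still show that the vertex operator algebra structure on that underlying module is unique, and this is precisely the hard step that the case-by-case literature \cite{DM04,LS15a,LS19,KLL18,LL20,EMS20b,LS20,LS20b} works through. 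The paper's own route in \autoref{thm:main3} is different in kind: \autoref{thm:main1} reduces uniqueness of $V$ to the statement that there is a unique conjugacy class of generalised deep holes in $\Aut(V_\Lambda)$ producing the given $V_1$, and one then enumerates such classes via \autoref{prop:autleech2} by computing $C_{\O(\Lambda)}(\nu)$-orbits on $(\Lambda^\nu/n)/\pi_\nu(\Lambda)$. The uniform proof of \cite{HM20} likewise proceeds by classifying orbifold automorphisms (over all Niemeier lattice vertex operator algebras), not by analysing extensions of $\langle V_1\rangle$.
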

In addition, uniform existence and uniqueness proofs were recently given in \cite{Hoe17,MS19,HM20}.

Schellekens' work is based on decomposing the \voa{} $V$ into irreducible modules for the affine \voa{} $\langle V_1\rangle\cong L_{\hat\g_1}(k_1,0)\otimes\ldots\otimes L_{\hat\g_r}(k_r,0)$ generated by $V_1$ (with levels $k_i\in\Ns$, see \cite{DM06b}), assuming $V_1$ to be semisimple, and deriving a set of trace identities (cf.\ Theorem~6.1 in \cite{EMS20a}) by viewing the character of $V$ as a Jacobi form. The lowest-order trace identity is
\begin{equation}\label{eq:221i}\tag{1}
\frac{h_i^\vee}{k_i}=\frac{\dim(V_1)-24}{24}
\end{equation}
for all $i=1,\ldots,r$ where $h_i^\vee$ is the dual Coxeter number of $\g_i$ (see also \cite{DM04}). This equation has exactly $221$ solutions, listed in \autoref{table:221}. The higher-order trace identities translate to a system of linear equations on the multiplicities appearing in the decomposition of $V$ into $\langle V_1\rangle$-modules. By excluding those Lie algebras for $V_1$ whose corresponding system has no solutions in the non-negative integers one essentially arrives at Schellekens' list of $71$ possible Lie algebras for $V_1$. This approach relies heavily on solving linear programming problems on the computer.

\medskip

The main goal of this paper is to provide a novel, simpler proof of Schellekens' list by showing that each \strathol{} \voa{} $V$ of central charge $24$ with $V_1\neq\{0\}$ can be obtained as orbifold construction $V\cong V_\Lambda^{\orb(g)}$ associated with the Leech lattice \voa{} $V_\Lambda$ (itself one of these \voa{}s) and some finite-order automorphism $g\in\Aut(V_\Lambda)$.

The orbifold construction \cite{EMS20a,Moe16} is an important method to construct \voa{}s. Let $V$ be a \strathol{} \voa{} and $g$ an automorphism of $V$ of finite order~$n$ and type~$0$. Then the \fpvosa{} $V^g$ is \strat{} \cite{Miy15,CM16} and has exactly $n^2$ non-isomorphic irreducible modules, which can be realised as the eigenspaces of $g$ acting on the irreducible twisted $V$-modules $V(g^j)$, $j\in\Z_n$. If the twisted modules $V(g^j)$ have positive conformal weight for $j\neq0\pmod{n}$, then the sum $V^{\orb(g)}:=\oplus_{j\in\Z_n}V(g^j)^g$ is again a \strathol{} \voa{} (of the same central charge as $V$).

\medskip

Our approach is motivated by the classification of positive-definite, even, unimodular lattices, which bears similarities to the classification of \strathol{} \voa{}s. In $1973$ Niemeier classified the positive-definite, even, unimodular lattices of rank $24$ \cite{Nie73} (see also \cite{Ven80,CS99}). He proved that up to isomorphism there are exactly $24$ such lattices and that the isomorphism class is uniquely determined by the root system. The Leech lattice $\Lambda$ is the unique one amongst these lattices without roots.

Conway, Parker and Sloane showed that there is a natural bijection, mediated by the ``holy construction'', between the $23$ classes of deep holes of the Leech lattice $\Lambda$, i.e.\ points in $\Lambda\otimes_\Z\R$ which have maximal distance to $\Lambda$, and the $23$ Niemeier lattices different from the Leech lattice \cite{CPS82,CS82} (see also \cite{Bor85}). Let $d$ be a deep hole corresponding to the Niemeier lattice $N$ and let $n$ denote the order of $d+\Lambda\in(\Lambda\otimes_\Z\R)/\Lambda$. Then $\Lambda^d:=\{\alpha\in\Lambda\,|\,\langle\alpha,d\rangle\in\Z\}$ is an index-$n$ sublattice of $\Lambda$ and of the lattice $\spn_\Z\{\Lambda^d,d\}$ generated by $d$ and $\Lambda^d$, which is isomorphic to the Niemeier lattice $N$. Note that $n$ equals the (dual) Coxeter number $h$ of (any irreducible component of) the root system of $N$.

For the inverse construction, given the Niemeier lattice $N$, let $u:=\sum_{i=1}^r\rho_i/h$ where the $\rho_i$ denote the Weyl vectors, i.e.\ the sums of the fundamental weights or the half sums of the positive roots, of the irreducible components of the root system of $N$. Then $N^u$ is an index-$h$ sublattice of $N$ that isomorphic to $\Lambda^d$ where $d$ is the corresponding deep hole.

\medskip

In \cite{MS19} the deep-hole construction of the Niemeier lattices was generalised to \strathol{} \voa{}s $V$ of central charge $24$. For each of the $71$ Lie algebras $\g$ on Schellekens' list a \gdh{}, a certain automorphism of the Leech lattice \voa{} $V_\Lambda$, such that $(V_\Lambda^{\orb(g)})_1\cong\g$ is explicitly listed. This provides a uniform proof of the existence part of the above theorem.

Complementarily, in this paper we lift the inverse construction to the level of \voa{}s (generalising \cite{LS20}). First, combining the dimension formula in \cite{MS19} with the ``very strange formula'' in \cite{Kac90} we prove:
\begin{thn*}[\autoref{thm:dimstrange}]
Let $V$ be a \strathol{} \voa{} of central charge $24$ whose weight-one Lie algebra $V_1=\bigoplus_{i=1}^r\g_i$ is semisimple with Cartan subalgebra $\hh$ and $g\neq\id$ an automorphism of $V$ of order~$n$ such that $g|_{V_1}$ is inner and characterised by $\delta=\sum_{i=1}^r\delta_i\in\hh^*$. Assume furthermore that $g$ is of type~$0$, that $V^g$ satisfies the positivity condition and that $g|_{V_1}$ is quasirational. Then
\begin{equation*}
\dim(V^{\orb(g)}_1)\leq24+12n\sum_{i=1}^rh^\vee_i\left|\delta_i-\frac{\rho_i}{h_i^\vee}\right|^2
\end{equation*}
with Weyl vectors $\rho_i$ and dual Coxeter numbers $h_i^\vee$.
\end{thn*}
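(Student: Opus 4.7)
The strategy is to combine the dimension formula for $\Z_n$-orbifold constructions from \cite{MS19} with Kac's very strange formula from \cite{Kac90}.

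First I would invoke the dimension formula of \cite{MS19}. Under the standing hypotheses (type $0$, positivity of $V^g$, quasirationality of $g|_{V_1}$), this formula expresses $\dim(V^{\orb(g)}_1)$ in closed form, with the right-hand side involving $\dim(V_1^g)$ and data from the irreducible $g^j$-twisted $V$-modules $V(g^j)$ for $j=1,\ldots,n-1$, in particular their conformal weights $\rho(V(g^j))$. The positivity condition forces these conformal weights to be strictly positive, and combining this with the dimension formula yields an upper bound on $\dim(V^{\orb(g)}_1)$ of the shape ``$24$ plus a positive combination of the $\rho(V(g^j))$''.

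The next step is to bound each $\rho(V(g^j))$ from below in Lie-theoretic terms. Since $g|_{V_1}$ is inner and characterised by $\delta=\sum_i\delta_i$, the restriction of $g^j$ to the affine sub-\voa{} $\langle V_1\rangle\cong\bigotimes_i L_{\hat\g_i}(k_i,0)$ is the inner affine automorphism $\exp(2\pi\i\,\ad(j\delta))$. Any irreducible $g^j$-twisted $V$-module decomposes into $g^j|_{V_1}$-twisted $\langle V_1\rangle$-modules, so $\rho(V(g^j))$ is at least the minimum of the conformal weights of such twisted $\langle V_1\rangle$-modules, and this minimum is realised by the twisted vacuum module. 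This module factorises over the simple summands of $V_1$, so it suffices to compute each factor separately.

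The conformal weight of the $g^j|_{\g_i}$-twisted vacuum module of $L_{\hat\g_i}(k_i,0)$ is precisely what Kac's very strange formula evaluates: after choosing a Weyl-alcove representative of $j\delta_i$, it is a quadratic expression in that representative whose quadratic completion, combined with the Freudenthal--de Vries strange formula $|\rho_i|^2/(2h_i^\vee)=\dim(\g_i)/24$ and the level--Coxeter identity \eqref{eq:221i}, collapses into a clean multiple of $h_i^\vee|\delta_i-\rho_i/h_i^\vee|^2$. Substituting these lower bounds into the dimension formula from Step~1, summing over $i$ and $j$, and simplifying using $\sum_i\dim(\g_i)=\dim(V_1)$ should produce exactly the claimed bound $24+12n\sum_i h_i^\vee|\delta_i-\rho_i/h_i^\vee|^2$.

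The principal technical obstacle I anticipate is the careful choice of Weyl-alcove representative of $j\delta_i$ in the very strange formula and the simultaneous treatment of all twisted sectors $j=1,\ldots,n-1$: a priori each sector contributes a bound in terms of $|j\delta_i-\rho_i/h_i^\vee|^2$, and it is not immediate that the total collapses to a clean multiple of $|\delta_i-\rho_i/h_i^\vee|^2$ without residual cross terms. Making this work will rely on $g$ having order exactly $n$ and on the specific form of the coefficients in the \cite{MS19} dimension formula, which must conspire to absorb the $j$-dependence into the overall factor $12n$. Aligning the normalisations of the very strange formula (stated in \cite{Kac90} at a canonical level) with the actual levels $k_i$ appearing in $\langle V_1\rangle$ is the second source of technical bookkeeping.
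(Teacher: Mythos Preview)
Your proposal misidentifies the structure of the upper bound coming from the dimension formula of \cite{MS19}. That bound (Corollary~5.7 there, stated here as \autoref{prop:dimform}) reads
\[
\dim(V^{\orb(g)}_1)\;\le\;24+\sum_{d\mid n}c_n(d)\,\dim(V_1^{g^d}),
\]
a linear combination of the \emph{fixed-point dimensions} $\dim(V_1^{g^d})$ for the various powers of $g$, not of the conformal weights $\rho(V(g^j))$. Your second step---bounding $\rho(V(g^j))$ from below via twisted vacuum modules of $\langle V_1\rangle$---therefore has nothing to act on; and even if the bound did take the form you describe, a \emph{lower} bound on $\rho$ would produce a lower bound on $\dim(V^{\orb(g)}_1)$, not the desired upper bound.

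The paper's argument is both different in kind and much shorter. Quasirationality of $g|_{V_1}$ means its characteristic polynomial is $\prod_{t\mid m}(x^t-1)^{b_t}$, and this cycle-shape encoding does two things at once: (i) $\dim(V_1^{g^d})=\sum_{t\mid m}b_t\,(t,d)$ for every $d$, and (ii) the left-hand side of the very strange formula collapses to $\tfrac{1}{24}\sum_{t\mid m}b_t/t$. Inserting (i) into the dimension bound and interchanging sums gives $\sum_{t\mid m}b_t\sum_{d\mid n}c_n(d)(t,d)$; the \emph{defining} relation $\sum_{d\mid n}(t,d)\,c_n(d)=n/t$ of the coefficients $c_n$ then yields $n\sum_{t\mid m}b_t/t$, which by (ii) equals $12n\sum_i h_i^\vee|\delta_i-\rho_i/h_i^\vee|^2$. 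There is no per-sector analysis of $V(g^j)$, no Weyl-alcove bookkeeping, no level normalisation, and no use of \eqref{eq:221i}: the ``$j$-dependence'' you flag as the main obstacle never arises, because the whole computation lives at the level of the cycle shape of $g|_{V_1}$. This is also why quasirationality is a genuine hypothesis---it is exactly what makes the cycle-shape rewriting possible---whereas in your sketch it plays no visible role.
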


Then, given a \strathol{} \voa{} $V$ of central charge $24$ with semisimple weight-one Lie algebra $V_1=\bigoplus_{i=1}^r\g_i$, we define the inner automorphism $\sigma_u=\e^{(2\pi\i) u_0}\in\Aut(V)$ with $u=\sum_{i=1}^r\rho_i/h_i^\vee$. Using equation~\eqref{eq:221i} one can show that this automorphism has type~$0$ so that the orbifold construction exists, and the above formula shows that $\dim(V^{\orb(\sigma_u)}_1)\leq24$, which implies that $V^{\orb(\sigma_u)}\cong V_\Lambda$ by \cite{DM06b,DM04b}. The inverse orbifold construction \cite{EMS20a,LS19} implies (not using Schellekens' classification):
\begin{thn*}[\autoref{thm:main1}]
Let $V$ be a \strathol{} \voa{} of central charge $24$ with $V_1\neq\{0\}$. Then $V$ is isomorphic to $V_\Lambda^{\orb(g)}$ for some automorphism $g\in\Aut(V_\Lambda)$.
\end{thn*}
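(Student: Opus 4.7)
The strategy is to distinguish two cases according to the Dong--Mason structure of $V_1$. Since $V_1\neq\{0\}$, by \cite{DM04} it is either $24$-dimensional abelian or semisimple. In the abelian case \cite{DM04b} forces $V\cong V_L$ for a positive-definite, even, unimodular rank-$24$ lattice $L$; since $V_1$ is abelian, $L$ has no roots, and by Niemeier's classification $L\cong\Lambda$, so $V\cong V_\Lambda=V_\Lambda^{\orb(\id)}$. The substantive case, which I handle next, is when $V_1=\bigoplus_{i=1}^r\g_i$ is semisimple, with Cartan subalgebra $\hh$.

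In this case I would set $u:=\sum_{i=1}^r\rho_i/h_i^\vee\in\hh$ and consider the inner automorphism $\sigma_u:=\e^{(2\pi\i)u_0}\in\Aut(V)$. The first task is to verify that $\sigma_u$ has finite order, is of type~$0$ (so that $V^{\orb(\sigma_u)}$ is well-defined and again \strathol{} of central charge $24$), and that the positivity and quasirationality hypotheses of \autoref{thm:dimstrange} hold. The crucial condition is type~$0$, which I would extract by expanding the defining formula for the type along $V_1=\bigoplus_i\g_i$, applying the Freudenthal--de Vries strange formula $|\rho_i|^2/(2h_i^\vee)=\dim(\g_i)/24$ to each summand, and then collapsing the resulting expression to an integer using the trace identity~\eqref{eq:221i}.

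With these hypotheses in place, \autoref{thm:dimstrange} applied with $\delta_i=\rho_i/h_i^\vee$ kills the squared length on the right-hand side and yields $\dim((V^{\orb(\sigma_u)})_1)\leq24$. On the other hand $\sigma_u$ acts trivially on $\hh\subseteq V_1$, so $\hh\subseteq(V^{\sigma_u})_1\subseteq(V^{\orb(\sigma_u)})_1$ and the weight-one dimension is at least $\rk(V_1)\geq1$. Applying the Dong--Mason trichotomy to $V^{\orb(\sigma_u)}$, the zero case is excluded by this lower bound, and the semisimple case is excluded because \eqref{eq:221i} forces $\dim>24$ for any semisimple weight-one Lie algebra. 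So $(V^{\orb(\sigma_u)})_1$ is abelian of dimension exactly $24$, which by \cite{DM04b} and the uniqueness of the Leech lattice among rootless Niemeier lattices forces $V^{\orb(\sigma_u)}\cong V_\Lambda$.

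Finally, the inverse orbifold construction of \cite{EMS20a,LS19} supplies an automorphism $g\in\Aut(V_\Lambda)$ (dual to $\sigma_u$) such that $V\cong V_\Lambda^{\orb(g)}$, which is the claim. The main technical obstacle will be the verification of the hypotheses of \autoref{thm:dimstrange} for $\sigma_u$, especially the type~$0$ condition requiring a careful blend of the very strange formula with~\eqref{eq:221i}; the remaining steps then amount to reading off standard consequences of known results.
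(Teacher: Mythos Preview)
Your plan is correct and matches the paper's approach essentially step for step: define $\sigma_u$ with $u=\sum_i\rho_i/h_i^\vee$, verify its order, type~$0$, quasirationality and positivity, apply \autoref{thm:dimstrange} to get $\dim(V^{\orb(\sigma_u)}_1)\leq24$, invoke the Dong--Mason trichotomy and \cite{DM04b} to identify $V^{\orb(\sigma_u)}\cong V_\Lambda$, and finish with the inverse orbifold. The only caveat is that the positivity of $V^{\sigma_u}$ is not as routine as you suggest---the paper handles it via a separate argument (embedding a lattice-times-parafermion subalgebra and using \cite{DR17})---and the equality $|\sigma_u|=|\sigma_u|_{V_1}|$ also takes real work using the module decomposition of $V$ over $\langle V_1\rangle$ together with~\eqref{eq:221i}; so the technical weight is spread more evenly across the hypotheses than your final sentence indicates.
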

More precisely, the automorphism $g$ may be taken to be a \gdh{} as introduced in \cite{MS19}. As an application we reprove Schellekens' list, only using equation~\eqref{eq:221i} but none of the higher-order trace identities.
\begin{thn*}[\autoref{thm:main2}]
Let $V$ be a \strathol{} \voa{} of central charge $24$ with $V_1\neq\{0\}$. Then $V_1$ is isomorphic to one of the $70$ non-zero Lie algebras in Table~1 of \cite{Sch93}.
\end{thn*}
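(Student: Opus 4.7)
The plan is to apply \autoref{thm:main1} and then narrow down the candidates from equation~\eqref{eq:221i} using the structure of $\Aut(V_\Lambda)$. If $V_1$ is abelian, then $V_1$ is of dimension $24$ and \cite{DM04b} gives $V\cong V_\Lambda$, whose weight-one space is on Schellekens' list. Henceforth assume $V_1=\bigoplus_{i=1}^r\g_i$ is semisimple, so that equation~\eqref{eq:221i} applies and restricts $V_1$ to one of the $221$ candidates enumerated in \autoref{table:221}.

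For each such candidate, \autoref{thm:main1} produces an automorphism $g\in\Aut(V_\Lambda)$ with $V\cong V_\Lambda^{\orb(g)}$; more precisely $g$ is a \gdh{}, arising via inverse orbifold from the type-$0$ inner automorphism $\sigma_u=\e^{(2\pi\i) u_0}$ of $V$ with $u=\sum_{i=1}^r\rho_i/h_i^\vee$. Every finite-order automorphism of $V_\Lambda$ is conjugate to one of the form $\sigma_\nu\hat\phi$ with $\phi\in O(\Lambda)=\Co_0$, $\hat\phi$ a standard lift and $\nu\in (V_\Lambda)_1$, and the weight-one Lie algebra $(V_\Lambda^{\orb(g)})_1$ is recoverable from the pair $(\phi,\nu)$: its Cartan subalgebra is the $g$-fixed part of $(V_\Lambda)_1$, and its roots come from conformal-weight-$1$ vectors in the $g^j$-twisted modules $V_\Lambda(g^j)$, which are controlled by the cycle shape of $\phi$ and the shift $\nu$ via the twisted-module character theory.

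The main step, and the main obstacle, is to show that no candidate from \autoref{table:221} outside Schellekens' list is actually realised. Equation~\eqref{eq:221i} together with the type-$0$ condition sharply constrains the order of $g$, the allowed cycle shapes of $\phi\in\Co_0$, and the admissible shift vectors $\nu$ modulo the appropriate sublattice, reducing the question to a finite case check over conjugacy classes in $\Aut(V_\Lambda)$. The construction of \cite{MS19} already supplies a \gdh{} realising each of the $69$ semisimple Lie algebras on Schellekens' list; it remains to verify that every \gdh{} of $V_\Lambda$ is $\Aut(V_\Lambda)$-conjugate to one of these. The finite enumeration then shows that no further conjugacy class of automorphisms of $V_\Lambda$ yields a \gdh{} whose weight-one Lie algebra satisfies equation~\eqref{eq:221i} and is not on the list, completing the proof.
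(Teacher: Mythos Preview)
Your overall strategy---apply \autoref{thm:main1} and then constrain the \gdh{} $g$---is correct in outline, but the final reduction you propose is both stronger than needed and not actually achievable. You write that it ``remains to verify that every \gdh{} of $V_\Lambda$ is $\Aut(V_\Lambda)$-conjugate to one of'' the $69$ explicit ones from \cite{MS19}. That statement is precisely the \emph{uniqueness} assertion, and the paper proves it only for $43$ of the $70$ cases (\autoref{thm:main3}); for the others the required enumeration of conjugacy classes in $\Aut(V_\Lambda)$ is computationally out of reach. So this step, as you have stated it, cannot be carried out, and your proof has a genuine gap at exactly the point where the work lies.

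The paper avoids this by inserting an intermediate reduction that you are missing. From the fact that $g$ is the inverse-orbifold partner of the specific automorphism $\sigma_u$ one extracts constraints on the \emph{projection} $\nu\in\O(\Lambda)$ alone (\autoref{lem:conditions}): the rank condition in the definition of a \gdh{} forces $\rk(\Lambda^\nu)=\rk(V_1)$; the order of $\sigma_u$ (computed in \autoref{prop:prepremain1_1}) forces $|\nu|\mid\lcm(l_ih_i^\vee)$; and extremality together with \eqref{eq:221} yields the vacuum-anomaly identity $1/(1-\rho_\nu)=\lcm(l_ik_i)$. These conditions depend only on the affine structure and the cycle shape of $\nu$, so matching the $221$ candidates against the $\Co_0$-classes is a pure combinatorial check that leaves just $82$ pairs, only $13$ of them spurious (\autoref{prop:13}). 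Eliminating those $13$ then requires enumerating only the conjugacy classes in $\Aut(V_\Lambda)$ with the prescribed $\nu$ and order $n$---a vastly smaller computation than classifying all \gdh{}s, and one that \emph{is} feasible. Your vague appeal to ``the type-$0$ condition'' does not produce these constraints; in particular the extremality/vacuum-anomaly relation, which does most of the cutting, never appears in your argument.
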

This novel proof is much simpler than the original proof \cite{Sch93,EMS20a} and for the most part does not rely on computer calculations.

\medskip

Finally, we can also use the result that each \strathol{} \voa{} $V$ of central charge $24$ with $V_1\neq\{0\}$ comes from the Leech lattice $\Lambda$ to prove the uniqueness of such a \voa{} for a given semisimple Lie algebra $V_1$. One merely has to classify the corresponding \gdh{}s in $\Aut(V_\Lambda)$. We demonstrate this for $43$ of the $70$ non-zero Lie algebras on Schellekens' list (see \autoref{thm:main3}). For Lie algebras $V_1$ of larger rank this method encounters computational limitations.

\subsection*{Outline}

In \autoref{sec:24} we recall some results on the classification of \strathol{} \voa{}s of central charge $24$.

In \autoref{sec:dimstrange} we review the cyclic orbifold construction. Then we combine the dimension formula in \cite{MS19} with Kac's ``very strange formula'' to derive the dimension formula in \autoref{thm:dimstrange}.

In \autoref{sec:autleech} we review lattice \voa{}s and describe the automorphisms of the Leech lattice \voa{} $V_\Lambda$ up to conjugation.

In \autoref{sec:leechuniversalvertex} we use \autoref{thm:dimstrange} and equation~\eqref{eq:221i} to prove that any \strathol{} \voa{} $V$ of central charge $24$ with $V_1\neq\{0\}$ can be obtained from the Leech lattice \voa{} $V_\Lambda$ by a cyclic orbifold construction (see \autoref{thm:main1}).

In \autoref{sec:newproof} we use \autoref{thm:main1} and the results in \autoref{sec:autleech} to give a simple proof of Schellekens' list of $71$ Lie algebras that can occur as the weight-one space of a \strathol{} \voa{} of central charge $24$ (see \autoref{thm:main2}).

Finally, in \autoref{sec:uniqueness} we prove the uniqueness of a \strathol{} \voa{} of central charge $24$ with a given weight-one space for $43$ of the $70$ non-zero Lie algebras on Schellekens' list by classifying \gdh{}s in $\Aut(V_\Lambda)$ (see \autoref{thm:main3}).

\subsection*{Acknowledgements}
The authors would like to thank Gerald Höhn and Nils Scheithauer for helpful discussions. The third and fourth authors are grateful for the hospitality of Academia Sinica in Taipei, where parts of this research were conducted. The authors thank the organisers of the 2018 RIMS workshop ``VOAs and Symmetries'' in Kyoto, where also part of this work was done. The first author was supported by CNPq grants 409598/2016-0 and 303806/2017-6, the second by grant AS-IA-107-M02 of Academia Sinica and MOST grant 107-2115-M-001-003-MY3 of Taiwan, the third by an AMS-Simons Travel Grant and the fourth by JSPS KAKENHI Grant Numbers JP17K05154, JP19KK0065 and JP20K03505.

%%%%%%%%%%%%%%%%%%%%%%%%%%%%%%%
%%%%%%%%%%%%%%%%%%%%%%%%%%%%%%%

\section{Holomorphic \VOA{}s of Central Charge 24}\label{sec:24}
In this section we review some results on \strathol{} \voa{}s of central charge $24$.

A \voa{} $V$ is called \emph{\strat{}} if it is rational (as defined, e.g., in \cite{DLM97}), $C_2$-cofinite (or lisse), self-contragredient (or self-dual) and of CFT-type. Then $V$ is also simple. Moreover, a rational \voa{} $V$ is said to be \emph{holomorphic} if $V$ itself is the only irreducible $V$-module. The central charge of a \strathol{} \voa{} $V$ is necessarily in $8\N$, a simple consequence of Zhu's modular invariance result \cite{Zhu96}. Examples of \strat{} \voa{}s are those associated with positive-definite, even lattices. If the lattice is unimodular, then the associated \voa{} is holomorphic.

\medskip

Let $V=\bigoplus_{n=0}^\infty V_n$ be a \voa{} of CFT-type. Then the zero modes
\begin{equation*}
[a,b]:=a_0b
\end{equation*}
for $a,b\in V_1$ equip $V_1$ with the structure of a Lie algebra. If $V$ is also self-contragredient, then there exists a non-degenerate, invariant bilinear form $\langle\cdot,\cdot\rangle$ on $V$, which is unique up to a non-zero scalar and symmetric \cite{FHL93,Li94}. We normalise the form such that $\langle\vac,\vac\rangle=-1$, where $\vac$ is the vacuum vector of $V$. Then $a_1b=b_1a=\langle a,b\rangle\vac$ for all for $a,b\in V_1$.

If $g\in\Aut(V)$ is an automorphism of the \voa{} $V$, fixing the vacuum vector $\vac\in V_0$ and the Virasoro vector $\omega\in V_2$ by definition, then the restriction of $g$ to $V_1$ is a Lie algebra automorphism.

\medskip

Let $\g$ be a simple, finite-dimensional Lie algebra with invariant bilinear form $(\cdot,\cdot)$ normalised such that $(\alpha,\alpha)=2$ for any long root $\alpha$. The affine Kac-Moody algebra $\hat\g$ associated with $\g$ is the Lie algebra $\hat\g:=\g\otimes\C[t,t^{-1}]\oplus\C K$ with central element $K$ and Lie bracket
\begin{equation*}
[a\otimes t^m,b\otimes t^n]:=[a,b]\otimes t^{m+n}+m(a,b)\delta_{m+n,0}K
\end{equation*}
for $a,b\in\g$, $m,n\in\Z$.

A representation of $\hat\g$ is said to have level $k\in\C$ if $K$ acts as $k\id$. For a dominant integral weight $\lambda\in P_+$ and $k\in\C$ let $L_{\hat\g}(k,\lambda)$ be the irreducible $\hat\g$-module of level $k$ obtained by inducing the irreducible highest-weight $\g$-module $L_\g(\lambda)$ up in a certain way to a $\hat\g$-module and taking its irreducible quotient (see, e.g., \cite{Kac90}).

For $k\in\Ns$, $L_{\hat\g}(k,0)$ admits the structure of a rational \voa{} whose irreducible modules are given by the $L_{\hat\g}(k,\lambda)$ for $\lambda\in P_+^k$, the subset of the dominant integral weights $P_+$ of level at most $k$ \cite{FZ92}. The conformal weight of the module $L_{\hat\g}(k,\lambda)$ is $\rho(L_{\hat\g}(k,\lambda))=\frac{(\lambda+2\rho,\lambda)}{2(k+h^\vee)}$ with Weyl vector $\rho$ and dual Coxeter number $h^\vee$ (see \cite{Kac90}, Corollary~12.8).

For a self-contragredient \voa{} $V$ of CFT-type the commutator formula implies that the modes satisfy
\begin{equation*}
[a_m,b_n]=(a_0b)_{m+n}+m(a_1b)_{m+n-1}=[a,b]_{m+n}+m\langle a,b\rangle\delta_{m+n,0}\id_V
\end{equation*}
for all $a,b\in V_1$, $m,n\in\Z$. Comparing this with the definition above we see that for a simple Lie subalgebra $\g$ of $V_1$ the map $a\otimes t^n\mapsto a_n$ for $a\in\g$ and $n\in\Z$ defines a representation of $\hat\g$ on $V$ of some level $k_\g\in\C$ with $\langle\cdot,\cdot\rangle|_\g=k_\g(\cdot,\cdot)$.

\medskip

Suppose that $V$ is \strat{}. Then it is shown in \cite{DM04b} that the Lie algebra $V_1$ is reductive, i.e.\ a direct sum of a semisimple and an abelian Lie algebra. Moreover, Theorem~3.1 in \cite{DM06b} states that for a simple Lie subalgebra $\g$ of $V_1$ the restriction of $\langle\cdot,\cdot\rangle$ to $\g$ is non-degenerate, the level $k_\g$ is a positive integer, the \vosa{} of $V$ generated by $\g$ is isomorphic to $L_{\hat\g}(k_\g,0)$ and $V$ is an integrable $\hat\g$-module.

Assume in addition that $V$ is holomorphic and of central charge $24$. Then the Lie algebra $V_1$ is zero, abelian of dimension $24$ or semisimple of rank at most $24$ \cite{DM04}. If the Lie algebra $V_1$ is semisimple, then it decomposes into a direct sum
\begin{equation*}
V_1\cong\g_1\oplus\ldots\oplus\g_r
\end{equation*}
of simple ideals $\g_i$ and the \vosa{} $\langle V_1\rangle$ of $V$ generated by $V_1$ is isomorphic to the tensor product of affine \voa{}s
\begin{equation*}
\langle V_1\rangle\cong L_{\hat\g_1}(k_1,0)\otimes\ldots\otimes L_{\hat\g_r}(k_r,0)
\end{equation*}
with levels $k_i:=k_{\g_i}\in\Ns$ and has the same Virasoro vector as $V$. The decomposition of the \voa{} $\langle V_1\rangle$ is called the \emph{affine structure} of $V$, denoted by
\begin{equation*}
\g_{1,k_1}\ldots\g_{r,k_r}
\end{equation*}
with $k_i$ sometimes omitted if it equals $1$.

Since $\langle V_1\rangle\cong L_{\hat\g_1}(k_1,0)\otimes\ldots\otimes L_{\hat\g_r}(k_r,0)$ is rational, $V$ decomposes into the direct sum of finitely many irreducible $\langle V_1\rangle$-modules
\begin{equation*}
V\cong\bigoplus_\lambda m_\lambda L_{\hat\g_1}(k_1,\lambda_1)\otimes\ldots\otimes L_{\hat\g_r}(k_r,\lambda_r)
\end{equation*}
with $m_\lambda\in\N$ and the sum runs over finitely many $\lambda=(\lambda_1,\ldots,\lambda_r)$ with dominant integral weights $\lambda_i\in P_+^{k_i}(\g_i)$, i.e.\ of level at most $k_i$.

\medskip

Let $h_i^\vee$ denote the dual Coxeter number of $\g_i$. The modular invariance of the character of $V$ implies that the ratio $h_i^\vee/k_i$ is independent of $\g_i$. More precisely,
\begin{equation}\label{eq:221}
\frac{h_i^\vee}{k_i}=\frac{\dim(V_1)-24}{24}
\end{equation}
for all $i=1,\ldots,r$, which follows from the lowest-order trace identity in \cite{Sch93} (see also \cite{DM04}). This equation is the only result from \cite{Sch93} that we shall use in this work in order to reprove Schellekens' list. Note that equation~\eqref{eq:221} also implies that the Lie algebra $V_1$ uniquely determines the affine structure, i.e.\ the levels $k_i$.

Together with the crude inequality $(h_i^\vee)^2<4\dim(\g_i)$ and $k_i\geq 1$ equation~\eqref{eq:221} implies that $\dim(V_1)$ is bounded from above, which proves that there are only finitely many affine structures satisfying equation~\eqref{eq:221}. It is then easy to produce a list of the exactly $221$ solutions of this equation (see \autoref{table:221}).

\medskip

Schellekens then narrowed down this list to $69$ possible affine structures (or Lie algebras $V_1$) by solving large integer linear programming problems on the computer that follow from higher-order trace identities (explicitly written down in \cite{EMS20a}, Theorem~6.1). Together with the zero Lie algebra and the $24$-dimensional abelian Lie algebra this gives Schellekens' list of $71$ Lie algebras (see \autoref{table:69}) that occur as the weight-one space of a \strathol{} \voa{} of central charge $24$ \cite{Sch93}.

In \autoref{sec:newproof} we present an alternative approach to proving this result by using the lowest-order trace identity \eqref{eq:221}, the dimension formula from \cite{MS19} and Kac's ``very strange formula''.

%%%%%%%%%%%%%%%%%%%%%%%%%%%%%%%
%%%%%%%%%%%%%%%%%%%%%%%%%%%%%%%

\section{Dimension Formula and ``Very Strange Formula''}\label{sec:dimstrange}
In this section we recall the cyclic orbifold construction. Then we state the dimension formula for central charge $24$ from \cite{MS19} and combine it with Kac's ``very strange formula'' to obtain the first main result of this text.

\subsection{Orbifold Construction}
The cyclic orbifold construction \cite{EMS20a,Moe16} is an important tool that allows to construct new \voa{}s from known ones.

Let $V$ be a \strathol{} \voa{} and $G=\langle g\rangle$ a finite, cyclic group of automorphisms of $V$ of order $n\in\Ns$.

By \cite{DLM00} there is an up to isomorphism unique irreducible $g^i$-twisted $V$-module $V(g^i)$ for each $i\in\Z_n$. The uniqueness of $V(g^i)$ implies that there is a representation $\phi_i\colon G\to\Aut_\C(V(g^i))$ of $G$ on the vector space $V(g^i)$ such that
\begin{equation*}
\phi_i(g)Y_{V(g^i)}(v,x)\phi_i(g)^{-1}=Y_{V(g^i)}(g v,x)
\end{equation*}
for all $i\in\Z_n$, $v\in V$. This representation is unique up to an $n$-th root of unity. Denote the eigenspace of $\phi_i(g)$ in $V(g^i)$ corresponding to the eigenvalue $\e^{(2\pi\i)j/n}$ by $W^{(i,j)}$. On $V(g^0)=V$ we choose $\phi_0(g)=g$.

By \cite{DM97} and recent results in \cite{Miy15,CM16} the \fpvosa{} $V^g=W^{(0,0)}$ is again \strat{}. It has exactly $n^2$ irreducible modules, namely the $W^{(i,j)}$, $i,j\in\Z_n$ \cite{MT04}. One can further show that the conformal weight $\rho(V(g))$ of $V(g)$ is in $(1/n^2)\Z$, and we define the type $t\in\Z_n$ of $g$ by $t=n^2\rho(V(g))\pmod{n}$.

Assume for simplicity that $g$ has type~$0$, i.e.\ that $\rho(V(g))\in(1/n)\Z$. Then it is possible to choose the representations $\phi_i$ such that the conformal weights satisfy
\begin{equation*}
\rho(W^{(i,j)})\in\frac{ij}{n}+\Z
\end{equation*}
and $V^g$ has fusion rules
\begin{equation*}
W^{(i,j)}\boxtimes W^{(l,k)}\cong W^{(i+l,j+k)}
\end{equation*}
for all $i,j,k,l\in\Z_n$ (see \cite{EMS20a}, Section~5), i.e.\ the fusion ring of $V^g$ is the group ring $\C[\Z_n\times\Z_n]$. In particular, all irreducible $V^g$-modules are simple currents.

In essence, the results in \cite{EMS20a} show that for cyclic $G\cong\Z_n$ and \strathol{} $V$ the module category of $V^G$ is the \emph{twisted group double} $\mathcal{D}_\omega(G)$ where the $3$-cocycle $[\omega]\in H^3(G,\C^\times)\cong\Z_n$ is determined by the type $t\in\Z_n$. This proves a special case of a conjecture by Dijkgraaf, Vafa, Verlinde and Verlinde \cite{DVVV89} who stated it for arbitrary finite $G$.

\medskip

In general, a simple \voa{} $V$ is said to satisfy the \emph{positivity condition} if the conformal weight $\rho(W)>0$ for any irreducible $V$-module $W\not\cong V$ and $\rho(V)=0$.

Now, if $V^g$ satisfies the positivity condition (it is shown in \cite{Moe18} that this condition is almost automatically satisfied if $V$ is \strat{}), then the direct sum of $V^g$-modules
\begin{equation*}
V^{\orb(g)}:=\bigoplus_{i\in\Z_n}W^{(i,0)}
\end{equation*}
admits the structure of a \strathol{} \voa{} of the same central charge as $V$ and is called \emph{orbifold construction} associated with $V$ and $g$ \cite{EMS20a}. Note that $\bigoplus_{j\in\Z_n}W^{(0,j)}$ gives back the old \voa{} $V$.

\medskip

We briefly describe the \emph{inverse} (or \emph{reverse}) orbifold construction \cite{EMS20a,LS19}. Suppose that the \strathol{} \voa{} $V^{\orb(g)}$ is obtained by an orbifold construction as described above. Then via $\amgis v:=\e^{(2\pi\i)i/n}v$ for $v\in W^{(i,0)}$ we define an automorphism $\amgis$ of $V^{\orb(g)}$ of order~$n$, and the unique irreducible $\amgis^j$-twisted $V^{\orb(g)}$-module is given by $V^{\orb(g)}(\amgis^j)=\bigoplus_{i\in\Z_n}W^{(i,j)}$, $j\in\Z_n$. Then
\begin{equation*}
(V^{\orb(g)})^{\orb(\amgis)}=\bigoplus_{j\in\Z_n}W^{(0,j)}=V,
\end{equation*}
i.e.\ orbifolding with $\amgis$ is inverse to orbifolding with $g$.

\subsection{Dimension Formula and ``Very Strange Formula''}
A dimension formula for the weight-one space $V^{\orb(g)}_1$ in the case of central charge $24$ was derived in \cite{MS19} by pairing the vector-valued character of $V^g$ with a vector-valued Eisenstein series of weight $2$. It implies the following dimension bound:
\begin{prop}[Dimension Formula, \cite{MS19}, Corollary~5.7]\label{prop:dimform}
Let $V$ be a \strathol{} \voa{} of central charge $24$ and $g\neq\id$ an automorphism of $V$ of finite order~$n$ and type~$0$ such that $V^g$ satisfies the positivity condition. Then
\begin{equation*}
\dim(V^{\orb(g)}_1)\leq24+\sum_{d\mid n}c_n(d)\dim(V_1^{g^d})
\end{equation*}
with the $c_n(d)\in\Q$ determined by $\sum_{d\mid n}(t,d)c_n(d)=n/t$ for all $t\mid n$.
\end{prop}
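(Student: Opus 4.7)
The plan is to isolate $\dim(V^{\orb(g)}_1)$, which appears as the $q^1$ Fourier coefficient of the character $\chi^{\orb}(\tau)=\sum_{i\in\Z_n}\operatorname{ch}_{W^{(i,0)}}(\tau)$, and to express it in terms of the data $\dim(V_1^{g^d})$ attached to fixed-point Lie subalgebras. The natural tool, as hinted in the excerpt, is a pairing of the vector-valued character of $V^g$ with a weight-$2$ vector-valued Eisenstein series.

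First, I would assemble the vector-valued modular form $T(\tau)=(T_{(i,j)}(\tau))_{i,j\in\Z_n}$ with components $T_{(i,j)}(\tau)=\operatorname{ch}_{W^{(i,j)}}(\tau)$. Because $g$ has type~$0$, these $n^2$ characters form a vector-valued modular form of weight $0$ under $\SLZ$ in a specific unitary representation $\rho$ of $\SLZ$ factoring through a finite quotient (the twisted double $\mathcal{D}(\Z_n)$). The orbifold character is $\chi^{\orb}=\sum_i T_{(i,0)}$, a modular function on $\SLZ$ equal to $j(\tau)+\dim(V^{\orb(g)}_1)-744$ since $V^{\orb(g)}$ is \strathol{} of central charge $24$.

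Next, I would construct a weight-$2$ vector-valued Eisenstein series $\mathcal{E}(\tau)=(\mathcal{E}_{(i,j)}(\tau))$ in the dual representation $\rho^{\ast}$, so that the sum
\[
\Phi(\tau) := \sum_{i,j\in\Z_n}\mathcal{E}_{(i,j)}(\tau)\,T_{(i,j)}(\tau)
\]
transforms as an ordinary scalar modular form of weight $2$ for $\SLZ$. Since there are no nonzero holomorphic weight-$2$ modular forms for $\SLZ$, $\Phi$ is determined by its principal (polar and non-holomorphic) parts, which can be traced back to the nearly-holomorphic completion of the quasimodular $E_2$. Matching the resulting Fourier identity coefficient-by-coefficient produces a linear relation among the leading coefficients of $T_{(i,j)}$ and hence involves $\dim(V^{\orb(g)}_1)$ on the one side and the constant terms of the McKay--Thompson series $Z(g^0,g^d;\tau)=\operatorname{tr}_V g^d q^{L_0-1}$ on the other. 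The constant terms of these McKay--Thompson series at the cusps of $\Gamma_0(n)$ are precisely $\operatorname{tr}_{V_1}(g^d)$, and a Möbius-type reorganization (reflected in the system $\sum_{d\mid n}(t,d)c_n(d)=n/t$ characterising the Eisenstein coefficients $c_n(d)$) repackages these traces into the fixed-point dimensions $\dim(V_1^{g^d})$.

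The inequality rather than equality arises because, in addition to the wanted $\dim(V^{\orb(g)}_1)$, the pairing also exposes contributions from the positive-weight parts of modules $W^{(i,j)}$ with $i\neq0$, whose graded dimensions are non-negative integers; discarding these non-negative terms converts the identity into an upper bound. I expect the main technical obstacle to be the explicit construction of the vector-valued Eisenstein series in the dual representation $\rho^{\ast}$ and the precise computation of its Fourier expansions at all cusps of $\Gamma_0(n)$, so that the emerging coefficients satisfy exactly the arithmetic system $\sum_{d\mid n}(t,d)c_n(d)=n/t$; this is essentially a Gauss-sum / Kloosterman-sum calculation inherent to weight-$2$ Eisenstein series for congruence subgroups.
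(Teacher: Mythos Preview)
Your outline is correct and matches the approach the paper attributes to \cite{MS19}: the proposition is not proved in this paper but is quoted from \cite{MS19}, Corollary~5.7, with the one-line description ``pairing the vector-valued character of $V^g$ with a vector-valued Eisenstein series of weight~$2$''. Your sketch unpacks exactly this, including the reason for the inequality (discarding non-negative contributions from the low-weight spaces of the twisted sectors), so there is nothing to add.
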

This generalises earlier results for $n=2,3$ in \cite{Mon94} (see also \cite{LS19}), for $n=2,3,5,7,13$ in \cite{Moe16} and for all $n$ such that the modular curve $\Gamma_0(n)\backslash\H^*$ has genus zero in \cite{EMS20b}. An automorphism $g$ such that $\dim(V^{\orb(g)}_1)$ attains this upper bound is called \emph{extremal}.

\medskip

We note that the upper bound from the dimension formula depends only on the weight-one Lie algebra $V_1$ and the action of the automorphism $g\in\Aut(V)$ on it. In the following we combine this upper bound with Kac's ``very strange formula'' to cast the dimension formula in a more Lie theoretic form.

For simplicity, we only state Kac's ``very strange formula'' in the special case of inner automorphisms but the following arguments work equally well in the general case. Recall from Kac's classification of finite-order automorphisms of simple Lie algebras (see \cite{Kac90}, Chapter~8) that an inner automorphism $g$ of a simple Lie algebra $\g$ of type $X_l$ is characterised by a sequence of non-negative, relatively prime integers $s_0,\ldots,s_l$ associated with the nodes of the untwisted affine Dynkin diagram $X_l^{(1)}$ satisfying $\sum_{i=0}^la_is_i=m$ with Kac labels $a_i$. Equivalently, $g$ is characterised by the vector $\delta$ in the dual Cartan subalgebra $\hh^*$ of $\g$ defined by $(\alpha_i,\delta)=s_i/m$ for all simple roots $\alpha_i$ of $\g$. Identifying the Cartan subalgebra $\hh$ with its dual $\hh^*$ via $(\cdot,\cdot)$ we may view $\delta\in\hh$ and then $g$ is conjugate to $\e^{\ad_{(2\pi\i)\delta}}$.

Recall that the Weyl vector $\rho\in\hh^*$ is the sum of the fundamental weights or the half sum of the positive roots of $\g$ and that $h^\vee$ denotes the dual Coxeter number.
\begin{prop}[``Very Strange Formula'', formula~(12.3.6) in \cite{Kac90}]
Let $\g$ be a finite-dimensional, simple Lie algebra with Cartan subalgebra $\hh$ and $g$ an inner automorphism of $\g$ of order $m\in\Ns$ characterised by $\delta\in\hh^*$ as described above. Then
\begin{equation*}
\frac{\dim(\g)}{24}-\frac{1}{4m^2}\sum_{j=1}^{m-1}j(m-j)\dim(\g_{(j)})=\frac{h^\vee}{2}\left|\delta-\frac{\rho}{h^\vee}\right|^2
\end{equation*}
where $\g_{(j)}$ is the eigenspace $\g_{(j)}=\{x\in\g\,|\,g x=\e^{(2\pi\i)j/m}x\}$, $j=0,\ldots,m-1$.
\end{prop}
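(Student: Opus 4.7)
The plan is to express both sides of the identity as sums over the roots of $\g$ and then reconcile them using two classical inputs: the identity $\sum_{\alpha\in\Phi}(\alpha,v)(\alpha,w) = 2h^\vee(v,w)$ that characterises the dual Coxeter number (via the quadratic Casimir / normalised Killing form), and the Freudenthal--de Vries ``strange formula'' $|\rho|^2/(2h^\vee) = \dim(\g)/24$, which is itself the $g = \id$, $\delta = 0$ specialisation of the very strange formula and is an independently known statement.

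First I would replace $\delta$ by a representative in the closed fundamental alcove. Both sides depend only on the inner conjugacy class of $g$, so after checking invariance under Weyl reflections (which shift $\delta$ and $\rho$ compatibly) and under coroot translations, this is a permissible reduction. In the alcove one has $(\alpha,\delta) \in [0,1]$ for every positive root $\alpha$, and the root space $\g_{\pm\alpha}$ contributes to $\g_{(j)}$ precisely when $j \equiv \pm m(\alpha,\delta)\pmod{m}$. Writing $t_\alpha := (\alpha,\delta)$ and noting that roots with $t_\alpha \in \{0,1\}$ contribute nothing on either side,
\begin{equation*}
\frac{1}{4m^2}\sum_{j=1}^{m-1}j(m-j)\dim(\g_{(j)}) = \frac{1}{2}\sum_{\alpha \in \Phi^+} t_\alpha(1 - t_\alpha).
\end{equation*}

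Next I would apply $t(1-t) = \frac{1}{6} - B_2(t)$, where $B_2(t) = t^2 - t + \frac{1}{6}$ is the second Bernoulli polynomial, and use $2|\Phi^+| = \dim(\g) - l$ (with $l$ the rank) to collect the constant terms, giving
\begin{equation*}
\frac{\dim(\g)}{24} - \frac{1}{4m^2}\sum_{j=1}^{m-1}j(m-j)\dim(\g_{(j)}) = \frac{l}{24} + \frac{1}{2}\sum_{\alpha \in \Phi^+}\bigl(t_\alpha^2 - t_\alpha + \tfrac{1}{6}\bigr).
\end{equation*}
Applying the two identities $\sum_{\alpha \in \Phi^+}(\alpha,\delta)^2 = h^\vee|\delta|^2$ and $\sum_{\alpha \in \Phi^+}(\alpha,\delta) = 2(\rho,\delta)$ collapses this to $\dim(\g)/24 + (h^\vee/2)|\delta|^2 - (\rho,\delta)$. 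On the other hand, expanding the target $\frac{h^\vee}{2}|\delta - \rho/h^\vee|^2 = \frac{h^\vee}{2}|\delta|^2 - (\rho,\delta) + |\rho|^2/(2h^\vee)$ and invoking the strange formula to identify $|\rho|^2/(2h^\vee)$ with $\dim(\g)/24$ matches the two expressions exactly.

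The main potential obstacle is the very first step, the reduction to the fundamental alcove: one has to verify carefully that both sides are invariant under the combined action of the affine Weyl group (so that the shift by $\rho$ on the right-hand side is consistent with the permutation of eigenspaces on the left) and under coroot translations of $\delta$. Beyond that, the argument is pure bookkeeping together with the two standard trace identities for roots; the genuine non-trivial input is the Freudenthal--de Vries strange formula, which is a separate classical result usually derived via the affine denominator identity or the principal specialisation of characters of integrable highest-weight modules of $\hat{\g}$.
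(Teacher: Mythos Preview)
The paper does not give its own proof of this proposition; it is quoted verbatim as formula~(12.3.6) from \cite{Kac90} and used as a black box in the proof of \autoref{thm:dimstrange}. So there is nothing to compare your argument against.

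Your computation is correct and is essentially the standard direct verification. One simplification: your first step, reducing $\delta$ to the closed fundamental alcove, is unnecessary. By the paper's setup (following Kac), $\delta$ is \emph{defined} by $(\alpha_i,\delta)=s_i/m$ with $s_i\geq 0$ and $\sum_{i=0}^l a_i s_i=m$, so $(\theta,\delta)=1-s_0/m\leq 1$ and $\delta$ already lies in the closed alcove. This also means your invariance concern is moot; in fact the right-hand side is \emph{not} invariant under coroot translations of $\delta$, so the formula as stated genuinely singles out the alcove representative and does not extend to arbitrary $\delta$ representing the same conjugacy class. Once $\delta$ is known to be in the alcove, the rest of your argument (the identification $\frac{1}{4m^2}\sum_j j(m-j)\dim(\g_{(j)})=\frac{1}{2}\sum_{\alpha\in\Phi^+}t_\alpha(1-t_\alpha)$, the two root-sum identities $\sum_{\alpha\in\Phi^+}(\alpha,\delta)^2=h^\vee|\delta|^2$ and $\sum_{\alpha\in\Phi^+}(\alpha,\delta)=2(\rho,\delta)$, and the Freudenthal--de~Vries strange formula) goes through exactly as you wrote it.
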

Note that the squared norm on the right-hand side is formed with respect to the bilinear form on $\hh^*$ induced by $(\cdot,\cdot)$.

\medskip

An automorphism $g$ of $\g$ of order $m$ is called \emph{quasirational} (see \cite{Kac90}, Section~8.8) if the corresponding eigenspaces satisfy $\dim(\g_{(i)})=\dim(\g_{(j)})$ whenever $(i,m)=(j,m)$ or equivalently if the characteristic polynomial of $g$ has rational coefficients. This means that the characteristic polynomial of $g$ can be written as $\prod_{t\mid m}(x^t-1)^{b_t}$ for some $b_t\in\Z$ (see, e.g., Exercise~13.7 in \cite{Kac90}). In this case we say $g$ has cycle shape $\prod_{t\mid m}t^{b_t}$ and it is not difficult to see that the left-hand side of Kac's ``very strange formula'' may be rewritten as
\begin{equation*}
\frac{\dim(\g)}{24}-\frac{1}{4m^2}\sum_{j=1}^{m-1}j(m-j)\dim(\g_{(j)})=\frac{1}{24}\sum_{t\mid m}\frac{b_t}{t}.
\end{equation*}

\medskip

We are now in a position to prove the first main result of this text:
\begin{thm}\label{thm:dimstrange}
Let $V$ be a \strathol{} \voa{} of central charge $24$ whose weight-one Lie algebra $V_1=\bigoplus_{i=1}^r\g_i$ is semisimple with Cartan subalgebra $\hh$ and $g\neq\id$ an automorphism of $V$ of order~$n$ such that $g|_{V_1}$ is inner and characterised by $\delta=\sum_{i=1}^r\delta_i\in\hh^*$. Assume furthermore that $g$ is of type~$0$, that $V^g$ satisfies the positivity condition and that $g|_{V_1}$ is quasirational. Then
\begin{equation*}
\dim(V^{\orb(g)}_1)\leq24+12n\sum_{i=1}^rh^\vee_i\left|\delta_i-\frac{\rho_i}{h_i^\vee}\right|^2
\end{equation*}
with Weyl vectors $\rho_i$ and dual Coxeter numbers $h_i^\vee$.
\end{thm}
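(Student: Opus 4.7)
The plan is to substitute Kac's ``very strange formula'' into the dimension bound of \autoref{prop:dimform} and simplify using the defining linear system for the coefficients $c_n(d)$. The hypotheses are fitted exactly to this: type~$0$ and positivity of $V^g$ are what let me invoke \autoref{prop:dimform}, while quasirationality of $g|_{V_1}$ is precisely what makes the cycle-shape reformulation of the very strange formula available.

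First I would observe that any inner automorphism of the semisimple Lie algebra $V_1=\bigoplus_{i=1}^r\g_i$ stabilises each simple ideal $\g_i$ (being a product of exponentials of inner derivations $\ad x_i$ with $x_i\in\g_i$, which annihilate $\g_j$ for $j\neq i$). Hence $g^d$ respects this decomposition for every $d\mid n$, and
\begin{equation*}
\dim(V_1^{g^d})=\sum_{i=1}^r\dim(\g_i^{g^d}).
\end{equation*}
By quasirationality, each $g|_{\g_i}$ has a cycle shape $\prod_{t\mid n}t^{b_{i,t}}$; its eigenvalues on $\g_i$ are the $t$-th roots of unity, each with multiplicity $b_{i,t}$, and counting those annihilated by raising to the $d$-th power gives
\begin{equation*}
\dim(\g_i^{g^d})=\sum_{t\mid n}b_{i,t}\gcd(t,d).
\end{equation*}

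Next I would plug this into the dimension formula, interchange the orders of summation, and use the defining identity $\sum_{d\mid n}c_n(d)\gcd(t,d)=n/t$ (valid for each $t\mid n$) to collapse the sum over $d$:
\begin{equation*}
\sum_{d\mid n}c_n(d)\dim(V_1^{g^d})=\sum_{i=1}^r\sum_{t\mid n}b_{i,t}\sum_{d\mid n}c_n(d)\gcd(t,d)=n\sum_{i=1}^r\sum_{t\mid n}\frac{b_{i,t}}{t}.
\end{equation*}
Applying to each simple summand the cycle-shape form of the ``very strange formula'' recalled just above turns $\sum_{t\mid n}b_{i,t}/t$ into $12h_i^\vee|\delta_i-\rho_i/h_i^\vee|^2$, and combining with \autoref{prop:dimform} yields the asserted bound.

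The only delicate step is the eigenvalue bookkeeping that produces the factor $\gcd(t,d)$: one has to verify carefully that the $t$-th roots of unity coming from the cycle factor $(x^t-1)^{b_{i,t}}$ each contribute multiplicity $b_{i,t}$, and that exactly $\gcd(t,d)$ of the distinct $t$-th roots of unity satisfy $\zeta^d=1$. Once this small combinatorial identity is confirmed, the theorem is a clean reorganisation of sums around the two quoted formulas, and no further VOA-theoretic input is required.
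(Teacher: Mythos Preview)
Your approach is essentially the same as the paper's: combine the dimension bound of \autoref{prop:dimform} with the cycle-shape form of the very strange formula via the defining relation $\sum_{d\mid n}c_n(d)\gcd(t,d)=n/t$. One small slip: the hypothesis is that $g|_{V_1}$ is quasirational, not that each restriction $g|_{\g_i}$ is, and the latter does \emph{not} follow (e.g.\ on $A_1\oplus A_1$ take inner automorphisms with eigenvalues $1,\e^{\pm 2\pi\i/5}$ and $1,\e^{\pm 4\pi\i/5}$ respectively; neither factor is quasirational but their sum has cycle shape $1^15^1$). So your per-ideal cycle shapes $b_{i,t}$ need not exist as integers. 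The fix is immediate and is what the paper does: work with the \emph{global} cycle shape $\prod_{t}t^{b_t}$ of $g|_{V_1}$, write $\dim(V_1^{g^d})=\sum_t b_t\gcd(t,d)$ directly, and only at the very end invoke the very strange formula in its original (eigenspace) form on each $\g_i$ and sum, since that left-hand side is additive over direct sums and its global cycle-shape rewriting needs only global quasirationality.
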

\begin{proof}
Since $g|_{V_1}$ is quasirational (and of some order $m\mid n$), Kac's ``very strange formula'', generalised to the semisimple Lie algebra $V_1$, becomes
\begin{equation*}
\frac{1}{24}\sum_{t\mid m}\frac{b_t}{t}=\sum_{i=1}^r\frac{h_i^\vee}{2}\left|\delta_i-\frac{\rho_i}{h_i^\vee}\right|^2
\end{equation*}
where $\prod_{t\mid m}t^{b_t}$ is the cycle shape of $g|_{V_1}$. Also note that
\begin{equation*}
\dim(V_1^{g^d})=\sum_{t\mid m}b_t(t,d)
\end{equation*}
for any $d\in\Z$. The dimension formula then yields
\begin{align*}
\dim(V^{\orb(g)}_1)&\leq24+\sum_{d\mid n}c_n(d)\dim(V_1^{g^d})=24+\sum_{d\mid n}c_n(d)\sum_{t\mid m}b_t(t,d)\\
&=24+\sum_{t\mid m}b_t\sum_{d\mid n}c_n(d)(t,d)=24+n\sum_{t\mid m}\frac{b_t}{t}\\
&=24+12n\sum_{i=1}^rh^\vee_i\left|\delta_i-\frac{\rho_i}{h_i^\vee}\right|^2
\end{align*}
where we used the defining relations of the $c_n(d)$.
\end{proof}

%%%%%%%%%%%%%%%%%%%%%%%%%%%%%%%
%%%%%%%%%%%%%%%%%%%%%%%%%%%%%%%

\section{Lattice \VOA{}s}\label{sec:autleech}
In this section we describe lattice \voa{}s, the automorphisms of the Leech lattice \voa{} $V_\Lambda$ \cite{Bor86,FLM88} and in particular their conjugacy classes, which were classified in \cite{MS19}.

For any positive-definite, even lattice $L$ (with bilinear form $\langle\cdot,\cdot\rangle\colon L\times L\to\Z$) the associated \voa{} is given by
\begin{equation*}
V_L=M(1)\otimes\C_\eps[L]
\end{equation*}
with the Heisenberg \voa{} $M(1)$ of rank $\rk(L)$ associated with $\h=L\otimes_\Z\C$ and the twisted group algebra $\C_\eps[L]$, the algebra with basis $\{\ee_\alpha\,|\,\alpha\in L\}$ and product $\ee_\alpha\ee_\beta=\eps(\alpha,\beta)\ee_{\alpha+\beta}$ for all $\alpha,\beta\in L$ where $\eps\colon L\times L\to\{\pm1\}$ is a choice of 2-cocycle satisfying $\eps(\alpha,\beta)/\eps(\beta,\alpha)=(-1)^{\langle\alpha,\beta\rangle}$.

Let $\O(L)$ denote the orthogonal group (or automorphism group) of the lattice $L$. For $\nu\in\O(L)$ and a function $\eta\colon L\to\{\pm1\}$ the map $\phi_\eta(\nu)$ acting on $\C_\eps[L]$ as $\phi_\eta(\nu)(\ee_\alpha)=\eta(\alpha)\ee_{\nu\alpha}$ for $\alpha\in L$ and as $\nu$ on $M(1)$ defines an automorphism of $V_L$ if and only if
\begin{equation*}
\frac{\eta(\alpha)\eta(\beta)}{\eta(\alpha+\beta)}=\frac{\eps(\alpha,\beta)}{\eps(\nu\alpha,\nu\beta)}
\end{equation*}
for all $\alpha,\beta\in L$. In this case $\phi_\eta(\nu)$ is called a \emph{lift} of $\nu$ and all such automorphisms form the subgroup $\O(\hat{L})$ of $\Aut(V_L)$. There is a short exact sequence
\begin{equation*}
1\to\Hom(L,\{\pm1\})\to\O(\hat{L})\to\O(L)\to1
\end{equation*}
with the surjection $\O(\hat{L})\to\O(L)$ given by $\phi_\eta(\nu)\mapsto\nu$. The image of $\Hom(L,\{\pm1\})$ in $\O(\hat{L})$ are exactly the lifts of $\id\in\O(L)$. For later use let $\nu\mapsto\hat{\nu}$ denote a fixed section $\O(L)\to\O(\hat{L})$.

If the restriction of $\eta$ to the fixed-point lattice $L^\nu$ is trivial, we call $\phi_\eta(\nu)$ a \emph{standard lift} of $\nu$. It is always possible to choose $\eta$ in this way (see \cite{Lep85}, Section~5). It was proved in \cite{EMS20a}, Proposition~7.1, that all standard lifts of a given $\nu\in\O(L)$ are conjugate in $\Aut(V_L)$. For convenience, let us assume that the section $\O(L)\to\O(\hat{L})$, $\nu\mapsto\hat{\nu}$ only maps to standard lifts. This is not essential but simplifies the presentation.

For any \voa{} $V$ of CFT-type $K:=\langle\{\e^{v_0}\,|\,v\in V_1\}\rangle$ defines a subgroup of $\Aut(V)$, called the \emph{inner automorphism group} of $V$. By \cite{DN99}, Theorem~2.1, the automorphism group of $V_L$ is of the form
\begin{equation*}
\Aut(V_L)=\O(\hat{L})\cdot K,
\end{equation*}
$K$ is a normal subgroup of $\Aut(V_L)$, $\Hom(L,\{\pm1\})$ a subgroup of $K\cap\O(\hat{L})$ and $\Aut(V_L)/K$ is isomorphic to some quotient group of $\O(L)$.

\medskip

In the following we specialise to the Leech lattice $\Lambda$, the up to isomorphism unique unimodular, positive-definite, even lattice of rank $24$ without roots, i.e.\ vectors of norm $2$. The automorphism group $\O(\Lambda)$ is the Conway group $\Co_0$. Since $(V_\Lambda)_1=\{h(-1)\otimes\ee_0\,|\,h\in\Lambda_\C\}\cong\Lambda_\C$, the inner automorphism group satisfies
\begin{equation*}
K=\{\e^{h_0}\,|\,h\in\Lambda_\C\}
\end{equation*}
and is abelian. Moreover, $\Aut(V_\Lambda)/K\cong\O(\Lambda)$, i.e.\ there is a short exact sequence
\begin{equation*}
1\to K\to\Aut(V_\Lambda)\to\O(\Lambda)\to1.
\end{equation*}
This is because $K\cap\O(\hat{\Lambda})=\Hom(\Lambda,\{\pm1\})$ in the special case of the Leech lattice. By the above, every automorphism of $V_\Lambda$ is of the form
\begin{equation*}
\phi_\eta(\nu)\sigma_h
\end{equation*}
for a lift $\phi_\eta(\nu)$ of some $\nu\in\O(\Lambda)$ and with $\sigma_h:=\e^{-(2\pi\i)h_0}$ for some $h\in\Lambda_\C$. The surjection $\Aut(V_\Lambda)\to\O(\Lambda)$ in the short exact sequence is given by $\phi_\eta(\nu)\sigma_h\mapsto\nu$.

It suffices to take $\phi_\eta(\nu)=\hat{\nu}$ from the section $\O(\Lambda)\to\O(\hat{\Lambda})$ since any two lifts only differ by a homomorphism $\Lambda\to\{\pm 1\}$, which can be absorbed into $\sigma_h$. Moreover, since $\sigma_h=\id$ if and only if $h\in\Lambda'=\Lambda$, it is enough to take $h\in\Lambda_\C/\Lambda$.

\medskip

We now describe the conjugacy classes of $\Aut(V_\Lambda)$. For $\nu\in\O(\Lambda)$ let $\pi_\nu=\frac{1}{|\nu|}\sum_{i=0}^{|\nu|-1}\nu^i$ denote the projection of $\Lambda_\C$ onto the elements of $\Lambda_\C$ fixed by $\nu$. By Lemma~8.3 in \cite{EMS20b} (see also Lemma~3.4 in \cite{LS19}) the automorphism $\phi_\eta(\nu)\sigma_h$ is conjugate to $\phi_\eta(\nu)\sigma_{\pi_\nu(h)}$ for any $h\in\Lambda_\C$, and $\phi_\eta(\nu)$ and $\sigma_{\pi_\nu(h)}$ commute.

In \cite{MS19} all automorphisms in $\Aut(V_\Lambda)$ were classified up to conjugation. A similar result for arbitrary lattice \voa{}s was proved in \cite{HM20}.
\begin{prop}[\cite{MS19}, Proposition~3.2]\label{prop:autleech}
Let $N$ denote a set of representatives for the conjugacy classes of $\O(\Lambda)\cong\Co_0$. For $\nu\in N$ let $H_\nu$ denote a set of representatives for the orbits of the action of the centraliser $C_{\O(\Lambda)}(\nu)$ on $\pi_\nu(\Lambda_\C)/\pi_\nu(\Lambda)$. Then $(\nu,h)\mapsto\hat{\nu}\sigma_h$ (with a fixed section $\O(\Lambda)\to\O(\hat{\Lambda})$, $\nu\mapsto\hat{\nu}$) defines a bijection from the set $Q:=\{(\nu,h)\,|\,\nu\in N,h\in H_\nu\}$ to the conjugacy classes of $\Aut(V_\Lambda)$.
\end{prop}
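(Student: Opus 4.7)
The plan is to show that the map $Q \to \{\text{conjugacy classes of }\Aut(V_\Lambda)\}$, $(\nu,h) \mapsto [\hat\nu \sigma_h]$, is well-defined, surjective and injective, proceeding in three stages.

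First I would establish surjectivity and the reduction to representatives $\nu \in N$. By the description of $\Aut(V_\Lambda)$ given just before the proposition, every automorphism can be written as $\hat\nu \sigma_h$ for some $\nu \in \O(\Lambda)$ and some $h \in \Lambda_\C / \Lambda$. The short exact sequence $1 \to K \to \Aut(V_\Lambda) \to \O(\Lambda) \to 1$ is natural under conjugation, so the image $\nu \in \O(\Lambda)$ of an automorphism is a conjugacy invariant up to its $\O(\Lambda)$-conjugacy class. Thus, after conjugating by a preimage in $\Aut(V_\Lambda)$ of a suitable element of $\O(\Lambda)$, I can assume $\nu \in N$.

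Next I would fix $\nu \in N$ and reduce $h$ to a representative in $\pi_\nu(\Lambda_\C)/\pi_\nu(\Lambda)$. The quoted Lemma~8.3 of \cite{EMS20b} gives directly that $\hat\nu \sigma_h$ is conjugate to $\hat\nu \sigma_{\pi_\nu(h)}$, so $h$ may be taken in $\pi_\nu(\Lambda_\C)$. Applying the same lemma to the identity element $\hat\nu = \hat\nu \sigma_l$ for any $l \in \Lambda$ shows that $\hat\nu$ is conjugate to $\hat\nu \sigma_{\pi_\nu(l)}$; combined with the previous step, $\hat\nu\sigma_h$ and $\hat\nu\sigma_{h+\pi_\nu(l)}$ are conjugate for every $l \in \Lambda$. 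Hence $h$ is only determined modulo $\pi_\nu(\Lambda)$, matching the quotient in $H_\nu$.

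Finally I would identify the remaining conjugation freedom with the centralizer action of $C_{\O(\Lambda)}(\nu)$ on $\pi_\nu(\Lambda_\C)/\pi_\nu(\Lambda)$. Using the identity $\hat\tau \sigma_h \hat\tau^{-1} = \sigma_{\tau h}$, a direct computation yields
\begin{equation*}
\hat\tau\sigma_k\,\hat\nu\sigma_h\,(\hat\tau\sigma_k)^{-1} \;=\; (\hat\tau\hat\nu\hat\tau^{-1})\,\sigma_{\tau(h)\,+\,(1-\nu)\tau(k)},
\end{equation*}
where $\hat\tau\hat\nu\hat\tau^{-1}$ is another lift of $\tau\nu\tau^{-1}$. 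For this to stay inside the coset $\hat\nu K$, we need $\tau\nu\tau^{-1} = \nu$, i.e.\ $\tau \in C_{\O(\Lambda)}(\nu)$. Then $\hat\tau\hat\nu\hat\tau^{-1} = \hat\nu \sigma_{c(\tau)}$ for some $c(\tau) \in \Hom(\Lambda,\{\pm 1\}) \subseteq \Lambda_\C/\Lambda$. Projecting with $\pi_\nu$ via Lemma~8.3 and noting that $\pi_\nu\big((1-\nu)\tau(k)\big) = 0$, the conjugation reduces on $\pi_\nu(\Lambda_\C)/\pi_\nu(\Lambda)$ to $h \mapsto \tau(h) + \pi_\nu(c(\tau))$. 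Combined with Step~2, $\pi_\nu(c(\tau))$ lies in $\pi_\nu(\Lambda)$, and one obtains the clean formula $h \mapsto \tau(h)$ modulo $\pi_\nu(\Lambda)$. This both exhibits the $C_{\O(\Lambda)}(\nu)$-action and shows conversely that these are the only conjugations that can occur within a fixed coset $\hat\nu K$.

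The main obstacle will be the third step, specifically controlling the lift ambiguity $c(\tau)$: the section $\nu \mapsto \hat\nu$ is not a group homomorphism, so $\hat\tau\hat\nu\hat\tau^{-1}$ generally differs from $\hat\nu$ by a non-trivial character of $\Lambda$. One must verify that the projection $\pi_\nu(c(\tau))$ lands in $\pi_\nu(\Lambda)$, so that after the reductions of Step~2 it is absorbed and leaves the action on $\pi_\nu(\Lambda_\C)/\pi_\nu(\Lambda)$ to be the naive one by $\tau$. This is where the convention (stated in the text) of taking $\hat\nu$ to be a standard lift, together with the fact that all standard lifts of a given $\nu$ are conjugate in $\Aut(V_L)$, will be essential.
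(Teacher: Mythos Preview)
The paper does not give a proof of this proposition; it is quoted from \cite{MS19}, Proposition~3.2, and only the surrounding material (the description of $\Aut(V_\Lambda)$, the reduction lemma $\phi_\eta(\nu)\sigma_h\sim\phi_\eta(\nu)\sigma_{\pi_\nu(h)}$, and the remark that any lift differs from $\hat\nu$ by an element of $K$) is recalled. So there is nothing in the present paper to compare your argument against line by line.

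That said, your outline is the standard one and matches the strategy behind the cited result. Steps~1 and~2 are correct as written: the short exact sequence forces conjugate automorphisms to have conjugate images in $\O(\Lambda)$, and the reduction of $h$ to $\pi_\nu(\Lambda_\C)/\pi_\nu(\Lambda)$ follows from the cited lemma together with the observation that replacing $h$ by $h+l$ for $l\in\Lambda$ does not change $\hat\nu\sigma_h$ but shifts the projected representative by $\pi_\nu(l)$.

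For Step~3 your identification of the obstacle is exactly right, but the resolution you gesture at deserves to be made explicit. The key computation is that if $\tau\in C_{\O(\Lambda)}(\nu)$ and $\hat\nu$, $\hat\tau$ are standard lifts, then $\hat\tau\hat\nu\hat\tau^{-1}$ is again a \emph{standard} lift of $\nu$: for $\alpha\in\Lambda^\nu$ one has $\tau^{-1}\alpha\in\Lambda^\nu$ (since $\tau$ commutes with $\nu$), so $\hat\nu$ fixes $\ee_{\tau^{-1}\alpha}$, and the $\eta_\tau$-factors from $\hat\tau$ and $\hat\tau^{-1}$ cancel. Hence $\sigma_{c(\tau)}$ acts trivially on $\ee_\alpha$ for all $\alpha\in\Lambda^\nu$, i.e.\ $\langle c(\tau),\Lambda^\nu\rangle\subseteq\Z$, which gives $\pi_\nu(c(\tau))\in(\Lambda^\nu)'$. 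Finally one uses that $\Lambda$ is unimodular and $\Lambda^\nu$ primitive in $\Lambda$, so $(\Lambda^\nu)'=\pi_\nu(\Lambda)$, and the cocycle term is absorbed as you claim. With this detail supplied, your proof is complete and correct.
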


We also describe the conjugacy classes in $\Aut(V_\Lambda)$ of a given finite order~$n$. First note that a standard lift $\phi_\eta(\nu)$ of $\nu$ has order $m:=|\nu|$ if $m$ is odd or if $m$ is even and $\langle\alpha,\nu^{m/2}\alpha\rangle\in 2\Z$ for all $\alpha\in\Lambda$ and order $2m$ otherwise. In the latter case we say that $\nu$ exhibits \emph{order doubling}. Then $\phi_\eta(\nu)\ee_\alpha=(-1)^{m\langle\pi_\nu(\alpha),\pi_\nu(\alpha)\rangle}\ee_\alpha=(-1)^{\langle\alpha,\nu^{m/2}\alpha\rangle}\ee_\alpha$ for all $\alpha\in\Lambda$. (Note that $\alpha\mapsto m\langle\pi_\nu(\alpha),\pi_\nu(\alpha)\rangle+2\Z=\langle\alpha,\nu^{m/2}\alpha\rangle+2\Z$ defines a homomorphism $\Lambda\to\Z_2$.)

Given a standard lift $\phi_\eta(\nu)$ that exhibits order doubling there exists a vector $s_\nu\in(1/2m)\Lambda^\nu$ defining an inner automorphism $\sigma_{s_\nu}=\e^{-(2\pi\i)(s_\nu)_0}$ of order $2m$ such that $\phi_\eta(\nu)\sigma_{s_\nu}$ has order $m$. (If $\nu$ does not exhibit order doubling, then we set $s_\nu=0$.) Then the order of an automorphism $\phi_\eta(\nu)\sigma_{s_{\nu}+f}$ for $f\in\Lambda\otimes_\Z\Q$ is given by $\lcm(m,k)$ where $k$ is minimal in $\Ns$ such that $kf$ is in $\Lambda$ or equivalently in the fixed-point lattice $\Lambda^\nu$.

For convenience, we define the $s_\nu$-shifted action of $C_{\O(\Lambda)}(\nu)$ on $\pi_\nu(\Lambda_\C)$ by
\begin{equation*}
\tau.f=\tau f+(\tau-\id)s_\nu
\end{equation*}
for all $\tau\in C_{\O(\Lambda)}(\nu)$ and $f\in\pi_\nu(\Lambda_\C)$. The following result is immediate:
\begin{prop}\label{prop:autleech2}
A complete system of representatives for the conjugacy classes of automorphisms in $\Aut(V_\Lambda)$ of order $n\in\Ns$ consists of the $\hat{\nu}\sigma_{s_{\nu}+f}$ where
\begin{enumerate}
\item $\nu$ is from the representatives in $N\subset\O(\Lambda)$ of order $m$ dividing $n$,
\item $f$ is from the orbit representatives of the $s_\nu$-shifted action of $C_{\O(\Lambda)}(\nu)$ on $(\Lambda^\nu/n)/\pi_{\nu}(\Lambda)$
\end{enumerate}
such that $\lcm(m,|\sigma_f|)=n$.
\end{prop}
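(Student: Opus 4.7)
The plan is to specialise the general classification of Proposition~\ref{prop:autleech} to conjugacy classes of a prescribed order~$n$, by reparametrising the inner-automorphism part of each representative via the shift vector $s_\nu$. The inputs are Proposition~\ref{prop:autleech} itself together with the order formula $|\hat\nu\sigma_{s_\nu+f}|=\lcm(m,|\sigma_f|)$ recalled immediately before the statement.

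First, starting from a representative $\hat\nu\sigma_h$ of a conjugacy class (with $\nu\in N$ of order $m$ and $h\in\pi_\nu(\Lambda_\C)$ taken modulo $\pi_\nu(\Lambda)$), I would substitute $h=s_\nu+f$. This change of variables is a bijection of $\pi_\nu(\Lambda_\C)/\pi_\nu(\Lambda)$ to itself because $s_\nu\in(1/2m)\Lambda^\nu\subset\pi_\nu(\Lambda_\C)$. Since $K$ is abelian and $\hat\nu$ commutes with $\sigma_v$ for every $v\in\pi_\nu(\Lambda_\C)$, one obtains the commuting factorisation
\[
\hat\nu\sigma_h=(\hat\nu\sigma_{s_\nu})\cdot\sigma_f,
\]
whose first factor has order exactly $m$ by the defining property of $s_\nu$. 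The order of the product is therefore $\lcm(m,|\sigma_f|)$, and imposing that this equal $n$ yields both the divisibility $m\mid n$ and the constraint $\lcm(m,|\sigma_f|)=n$ appearing in the statement.

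Next I would translate the centraliser action from $h$ to $f$. For $\tau\in C_{\O(\Lambda)}(\nu)$ the action described in Proposition~\ref{prop:autleech} sends the class of $\hat\nu\sigma_h$ to that of $\hat\nu\sigma_{\tau h}$; writing $\tau h=s_\nu+f'$ and solving for $f'$ gives
\[
f'=\tau f+(\tau-\id)s_\nu,
\]
which is precisely the $s_\nu$-shifted action defined in the text (here one uses $\tau\Lambda^\nu=\Lambda^\nu$, which ensures that $s_\nu$ and $f$ remain in $\pi_\nu(\Lambda_\C)$ after the action). Finally, $|\sigma_f|$ divides $n$ iff $nf\in\Lambda\cap\pi_\nu(\Lambda_\C)=\Lambda^\nu$, that is, $f\in\Lambda^\nu/n$, so $f$ ranges over $(\Lambda^\nu/n)/\pi_\nu(\Lambda)$ modulo the shifted action, as claimed.

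I do not anticipate a serious obstacle: the substantive classification work is packaged in Proposition~\ref{prop:autleech}, and what remains is essentially bookkeeping. The only point needing slight care is to verify that the $s_\nu$-shift intertwines the natural $C_{\O(\Lambda)}(\nu)$-action on $h$ with the shifted action on $f$, and that this action preserves the sublattice cut out by the order constraint; both reduce to the elementary identity displayed above together with the fact that conjugation in $\Aut(V_\Lambda)$ preserves orders automatically.
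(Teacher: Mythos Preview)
Your argument is correct and is precisely the bookkeeping the paper has in mind: the paper gives no detailed proof at all, simply declaring the result ``immediate'' from Proposition~\ref{prop:autleech} together with the order formula $|\hat\nu\sigma_{s_\nu+f}|=\lcm(m,|\sigma_f|)$ stated just before. Your reparametrisation $h=s_\nu+f$, the identification of the induced centraliser action as the $s_\nu$-shifted action, and the observation that order preservation under conjugation forces the shifted action to stabilise $(\Lambda^\nu/n)/\pi_\nu(\Lambda)$ are exactly what is needed to unpack that word ``immediate''.
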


\medskip

We conclude this section by recalling some results on the twisted modules of lattice \voa{}s. For a standard lift $\phi_\eta(\nu)$ the irreducible $\phi_\eta(\nu)$-twisted modules of a lattice \voa{} $V_L$ are described in \cite{DL96,BK04}. Together with the results in Section~5 of \cite{Li96} this allows us to describe the irreducible $g$-twisted $V_L$-modules for all finite-order automorphisms $g\in\Aut(V_L)$.

For simplicity, let $L$ be unimodular. Then $V_L$ is holomorphic and there is a unique irreducible $g$-twisted $V_L$-module $V_L(g)$ for each $g\in\Aut(V_L)$ of finite order. Let $g=\phi_\eta(\nu)\sigma_h$ for some standard lift $\phi_\eta(\nu)$ and $\sigma_h=\e^{-(2\pi\i)h_0}$ for some $h\in\pi_\nu(L\otimes_\Z\Q)$. Then
\begin{equation*}
V_{L}(g)=M(1)[\nu]\otimes\ee_{h}\C[\pi_\nu(L)]\otimes\C^{d(\nu)}
\end{equation*}
with twisted Heisenberg module $M(1)[\nu]$, grading by the lattice coset $h+\pi_\nu(L)$ and defect $d(\nu)\in\Ns$.

Assume that $\nu$ has order~$m$ and cycle shape $\prod_{t\mid m}t^{b_t}$ with $b_t\in\Z$, i.e.\ the extension of $\nu$ to $\h$ has characteristic polynomial $\prod_{t\mid m}(x^t-1)^{b_t}$. Then the conformal weight of $V_L(g)$ is given by
\begin{equation*}
\rho(V_L(g))=\frac{1}{24}\sum_{t\mid m}b_t\left(t-\frac{1}{t}\right)+\min_{\alpha\in h+\pi_{\nu}(L)}\langle\alpha,\alpha\rangle/2\geq0,
\end{equation*}
where $\rho_\nu:=\frac{1}{24}\sum_{t\mid m}b_t\left(t-\frac{1}{t}\right)$ is called the \emph{vacuum anomaly} of $V_L(g)$ \cite{DL96}. Note that $\rho_\nu$ is positive for $\nu\neq\id$.

%%%%%%%%%%%%%%%%%%%%%%%%%%%%%%%
%%%%%%%%%%%%%%%%%%%%%%%%%%%%%%%

\section{Neighbours of the Leech Lattice \VOA{}}\label{sec:leechuniversalvertex}
In this section, given a \strathol{} \voa{} $V$ of central charge $24$ with $V_1=\bigoplus_{i=1}^r\g_i$ semisimple, we define a certain inner automorphism $\sigma_u\in\Aut(V)$ and show using \autoref{thm:dimstrange} and equation~\eqref{eq:221} that the corresponding orbifold construction $V^{\orb(\sigma_u)}$ is isomorphic to the Leech lattice \voa{} $V_\Lambda$. By the inverse orbifold construction this then proves that every \strathol{} \voa{} $V$ of central charge $24$ with $V_1\neq\{0\}$ can be obtained by an orbifold construction from the Leech lattice \voa{} $V_\Lambda$.

The upper bound in \autoref{thm:dimstrange} suggests a canonical choice for the automorphism $\sigma_u\in\Aut(V)$, namely
\begin{equation*}
\sigma_u=\e^{(2\pi\i)u_0}\quad\text{with}\quad u:=\sum_{i=1}^r\rho_i/h_i^\vee
\end{equation*}
with Weyl vectors $\rho_i$ and dual Coxeter numbers $h_i^\vee$, where we view $u$ in the Cartan subalgebra $\hh$ of $V_1$ by means the identification of $\hh$ with $\hh^*$ via the invariant bilinear form $(\cdot,\cdot)$, which we normalised on each simple ideal $\g_i$ such that long roots have norm $2$.

First, we study some properties of this automorphism. We denote by $l_i\in\{1,2,3\}$ the lacing number of $\g_i$.
\begin{prop}\label{prop:prepremain1_1}
Let $V$ be a \strathol{} \voa{} of central charge $24$ with $V_1=\bigoplus_{i=1}^r\g_i$ semisimple. Then the inner automorphism $\sigma_u=\e^{(2\pi\i)u_0}$ with $u=\sum_{i=1}^r\rho_i/h_i^\vee$ has order $n=\lcm(\{l_ih_i^\vee\}_{i=1}^r)$ (as does the restriction $\sigma_u|_{V_1}$ to $V_1$) and type~$0$, and $\sigma_u|_{V_1}$ is quasirational.
\end{prop}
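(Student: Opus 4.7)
The plan is to reduce the four assertions (order on $V_1$, order on $V$, type $0$, quasirationality) to each simple ideal $\g_i$ of $V_1$. Writing $u=\sum_{i=1}^r u_i$ with $u_i:=\rho_i/h_i^\vee\in\hh_i$, the automorphism factorises as a commuting product $\sigma_u=\prod_i\sigma_{u_i}$, and on a root space $\g_\alpha\subset\g_i$ it acts as multiplication by $\e^{(2\pi\i)(\alpha,\rho_i)/h_i^\vee}$. From $(\alpha_j,\rho_i)=(\alpha_j,\alpha_j)/2\in\{1,1/l_i\}$ for simple roots, one sees that $(\alpha,\rho_i)\in(1/l_i)\Z$ with full denominator $l_i$ attained on some root (e.g.\ the highest short root). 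Hence the maximal denominator of $(\alpha,\rho_i)/h_i^\vee$ is exactly $l_i h_i^\vee$, so $\sigma_{u_i}|_{\g_i}$ has order $l_ih_i^\vee$ and $\sigma_u|_{V_1}$ has order $n=\lcm_i(l_ih_i^\vee)$. For quasirationality of $\sigma_u|_{V_1}$, one recognises $u_i$ as a standard inner Kac element of Chapter~8 of \cite{Kac90}; the Weyl group acts transitively on roots of a fixed length and preserves the values $(\alpha,\rho_i)\pmod\Z$ appropriately, which forces the multiplicity of the eigenvalue $\e^{(2\pi\i)k/(l_ih_i^\vee)}$ to depend only on $\gcd(k,l_ih_i^\vee)$.

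To extend the order computation to the full $V$, the key input is that equation~\eqref{eq:221} yields $\sum_i c_{\g_i}=\dim(V_1)/(1+c)=24$, where $c=(\dim(V_1)-24)/24$, so the affine subVOA $\langle V_1\rangle\cong\bigotimes_i L_{\hat\g_i}(k_i,0)$ has the same central charge as $V$ and, having trivial commutant, inherits the Virasoro vector of $V$. Consequently every summand $\bigotimes_i L_{\hat\g_i}(k_i,\lambda_i)$ appearing in the $\langle V_1\rangle$-decomposition of $V$ has non-negative integer conformal weight $\sum_i(\lambda_i,\lambda_i+2\rho_i)/(2(k_i+h_i^\vee))$. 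Since $\sigma_u^n|_{V_1}=\id$ from the first step, $\sigma_u^n$ commutes with the whole $\langle V_1\rangle$-action and, by Schur's lemma, acts on each such summand by the scalar $\e^{(2\pi\i)n\sum_i(\lambda_i,\rho_i)/h_i^\vee}$. Using $k_i+h_i^\vee=h_i^\vee\dim(V_1)/(\dim(V_1)-24)$ from equation~\eqref{eq:221}, the integrality of the conformal weight combined with the integrality properties of the $(\lambda_i,\lambda_i)/(2h_i^\vee)$-terms coming from the weight-lattice structure of $\g_i$ forces $n\sum_i(\lambda_i,\rho_i)/h_i^\vee\in\Z$; so this scalar is $1$ and $\sigma_u^n=\id_V$. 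Matching the lower bound from $V_1$, the order of $\sigma_u$ on $V$ is exactly $n$.

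Finally, type~$0$ follows from the Freudenthal--de Vries strange formula $|\rho_i|^2=h_i^\vee\dim(\g_i)/12$, which gives $(u,u)/2=\sum_i\dim(\g_i)/(24h_i^\vee)$ and hence $n(u,u)/2=\sum_i(n/h_i^\vee)\dim(\g_i)/24$. Since $l_ih_i^\vee\mid n$ each $n/h_i^\vee$ is a positive integer, and a short computation from equation~\eqref{eq:221} shows $n(u,u)/2\in\Z$. By Li's $\Delta$-operator description, the conformal weight of the twisted module satisfies $\rho(V(\sigma_u))=(u,u)/2+\min_{v\in V\setminus\{0\}}\bigl(\wt_V(v)+\lambda_v(u)\bigr)$, and multiplying by $n$ every term becomes integer by the preceding step, so $\sigma_u$ is of type~$0$. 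The main technical obstacle lies in the order-on-$V$ argument, where the integer-conformal-weight condition must be combined delicately with equation~\eqref{eq:221} and weight-lattice integrality to force $n\sum_i(\lambda_i,\rho_i)/h_i^\vee\in\Z$ for every admissible $\vec\lambda$; in uncooperative cases this may reduce to analysing the simple-current extension $\langle V_1\rangle\subset V$ or to a case-by-case inspection across the $221$ solutions of equation~\eqref{eq:221}.
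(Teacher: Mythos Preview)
Your overall framework matches the paper's: compute the order on each $\g_i$ via the action on roots, lift to $V$ using the $\langle V_1\rangle$-decomposition and conformal-weight integrality, and deduce type~$0$ from the strange formula together with equation~\eqref{eq:221}. However, two steps are incomplete or off.

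The main gap is the order-on-$V$ step. You claim that conformal-weight integrality together with ``integrality properties of the $(\lambda_i,\lambda_i)/(2h_i^\vee)$-terms coming from the weight-lattice structure'' forces $n(u,\lambda)\in\Z$, but there is no such integrality for the individual terms $(\lambda_i,\lambda_i)/(2h_i^\vee)$; you acknowledge this yourself by suggesting the step may need a case-by-case run over the $221$ solutions. The paper avoids any case analysis via a parity trick. From conformal-weight integrality and equation~\eqref{eq:221} one first extracts $\sum_i (n/h_i^\vee)(\lambda_i,\lambda_i)\in\Z$ (only the \emph{sum} is controlled). Then, writing $2\rho_i=\sum_{\alpha\in\Phi_i^+}\alpha$ and using the Killing-form identity $\sum_{\alpha\in\Phi^+}(\alpha,y)^2=h^\vee(y,y)$, one obtains
\[
(2n(u,\lambda))^2=\Bigl(\sum_i\sum_{\alpha\in\Phi_i^+}\tfrac{n}{h_i^\vee}(\alpha,\lambda_i)\Bigr)^{\!2}\equiv\sum_i\sum_{\alpha\in\Phi_i^+}\Bigl(\tfrac{n}{h_i^\vee}\Bigr)^{\!2}(\alpha,\lambda_i)^2=n\sum_i\tfrac{n}{h_i^\vee}(\lambda_i,\lambda_i)\pmod 2,
\]
since each $\tfrac{n}{h_i^\vee}(\alpha,\lambda_i)$ is an integer. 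For $n$ even the right-hand side is even, so $2n(u,\lambda)$ is even; for $n$ odd every $l_i$ and $h_i^\vee$ is odd, forcing all $\g_i$ to be of type $A_l$ with $l$ even, where $\rho_i$ already lies in the root lattice. This is the missing idea.

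A smaller issue concerns the type-$0$ computation. The relevant norm is the VOA form $\langle\cdot,\cdot\rangle$, which on $\g_i$ equals $k_i(\cdot,\cdot)$. Hence the $L_0$-shift is $\langle u,u\rangle/2=\sum_i k_i|\rho_i|^2/(2(h_i^\vee)^2)=\sum_i k_i\dim(\g_i)/(24h_i^\vee)$, not your $\sum_i\dim(\g_i)/(24h_i^\vee)$. With equation~\eqref{eq:221} this collapses to $(h_i^\vee+k_i)/h_i^\vee$ for every $i$, which is visibly in $(1/n)\Z$; your version without the $k_i$ does not simplify this way. Also, your minimum formula for $\rho(V(\sigma_u))$ is unnecessary and not quite right: once $\sigma_u^n=\id_V$, the eigenvalues of $u_0$ lie in $(1/n)\Z$, so type~$0$ is simply the condition $\langle u,u\rangle/2\in(1/n)\Z$.
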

\begin{proof}
First, we describe the Lie algebra automorphism $\sigma_u|_{V_1}=\e^{\ad_{(2\pi\i)u}}$ in the language of \cite{Kac90}, Chapter~8. Since the Weyl vector $\rho_i$ is the sum of the fundamental weights (dual to the simple coroots) of $\g_i$, it is easy to determine the sequence of integers $s_0,\ldots,s_l$ (or equivalently the vector $\delta_i$ in the dual Cartan subalgebra $\hh_i^*$ of $\g_i$) characterising the action of $\sigma_u|_{\g_i}=\e^{\ad_{(2\pi\i)\rho_i/h_i^\vee}}$ on the simple roots of $\g_i$. Indeed, if $\g_i$ is simply-laced, i.e.\ of type $A_l$, $D_l$ or $E_l$, then $\sigma_u|_{\g_i}$ has type $(1,\ldots,1;1)$ (and this is the up to conjugation unique minimal-order regular automorphism of $\g_i$, see Exercise~8.11 in \cite{Kac90}). For types $B_l$, $C_l$, $F_4$ or $G_2$ the automorphism $\sigma_u|_{\g_i}$ has type $(2,\ldots,2,1;1)$, $(2,1,\ldots,1,2;1)$, $(2,2,2,1,1;1)$ or $(3,3,1;1)$, respectively. Not surprisingly, this shows that $\delta_i=\rho_i/h_i^\vee$.

The quasirationality of $\sigma_u|_{V_1}$ now follows from the quasirationality on each simple ideal $\g_i$. If $\g_i$ is simply-laced, this is Exercise~8.12 in \cite{Kac90}. For the non simply-laced cases, it is not difficult to show that an automorphism of the above-mentioned types is quasirational.

\medskip

The order of $\sigma_u|_{\g_i}$ is $l_ih_i^\vee$ with dual Coxeter number $h_i^\vee$ and lacing number $l_i$ so that the Lie algebra automorphism $\sigma_u|_{V_1}$ has order $n=\lcm(\{l_ih_i^\vee\}_{i=1}^r)$.

In principle, the order of $\sigma_u$ on the \voa{} $V$ is a multiple of $n$. In the following we show that in fact $|\sigma_u|=n=\lcm(\{l_ih_i^\vee\}_{i=1}^r)$. Recall that $V$ decomposes into a direct sum of irreducible $\langle V_1\rangle$-modules $V\cong\bigoplus_\lambda m_\lambda \bigotimes_{i=1}^rL_{\hat\g_1}(k_i,\lambda_i)$ where the sum runs over weights $\lambda=(\lambda_1,\ldots,\lambda_r)$ with dominant integral weights $\lambda_i\in P_+^{k_i}(\g_i)$. In order to prove that $\sigma_u$ has order $n$, it suffices to show that $n(u,\lambda)\in\Z$ for all weights $\lambda$ appearing in this decomposition. (It is easy to see that $2n(u,\lambda)\in\Z$ for any weight $\lambda$ in the weight lattice $P$.)

The conformal weight of a module $\bigotimes_{i=1}^rL_{\hat\g_i}(k_i,\lambda_i)$ (see \autoref{sec:24}) appearing in the decomposition of $V$ must be an integer, i.e.
\begin{align*}
\sum_{i=1}^r\frac{(2\rho_i+\lambda_i,\lambda_i)}{2(k_i+h_i^\vee)}&=\frac{h_i^\vee}{2(h_i^\vee+k_i)}\sum_{i=1}^r\left(2(\rho_i/h_i^\vee,\lambda_i)+\frac{1}{h_i^\vee}
(\lambda_i,\lambda_i)\right)\\
&=\frac{1}{2n(1+k_i/h_i^\vee)}\left(2n(u,\lambda)+\sum_{i=1}^r\frac{n}{h_i^\vee}(\lambda_i,\lambda_i)\right)\in\Z
\end{align*}
where we used that $h_i^\vee/(h_i^\vee+k_i)$ is independent of $i$ by equation~\eqref{eq:221}. Then, since $n(1+k_i/h_i^\vee)\in\Z$, this implies that $2n(u,\lambda)+\sum_{i=1}^r\frac{n}{h_i^\vee}(\lambda_i,\lambda_i)\in\Z$ and, recalling that $2n(u,\lambda)\in\Z$, we obtain
\begin{equation*}
\sum_{i=1}^r\frac{n}{h_i^\vee}(\lambda_i,\lambda_i)\in\Z.
\end{equation*}
Because the Weyl vector $\rho_i=\frac{1}{2}\sum_{\alpha\in\Phi_i^+}\alpha$ is the half sum of positive roots,
\begin{align*}
(2n(u,\lambda))^2&=\left(\sum_{i=1}^r\frac{n}{h_i^\vee}(2\rho_i,\lambda_i)\right)^2=\left(\sum_{i=1}^r\sum_{\alpha\in\Phi_i^+}\frac{n}{h_i^\vee}(\alpha,\lambda_i)\right)^2\\
&\equiv\sum_{i=1}^r\sum_{\alpha\in\Phi_i^+}\left(\frac{n}{h_i^\vee}\right)^2(\alpha,\lambda_i)^2\pmod{2}\\
&=\sum_{i=1}^r\left(\frac{n}{h_i^\vee}\right)^2h_i^\vee(\lambda_i,\lambda_i)=n\sum_{i=1}^r\frac{n}{h_i^\vee}(\lambda_i,\lambda_i).
\end{align*}
Note that $\frac{n}{h_i^\vee}(\alpha,\lambda_i)\in\Z$ for any root $\alpha\in\Phi_i$.

Here, we used that for any irreducible Lie algebra $\g$ (with Cartan subalgebra $\hh$, root system $\Phi\subseteq\hh^*$, positive roots $\Phi^+\subseteq\Phi$ and dual Coxeter number $h^\vee$) the identity
\begin{equation*}
\sum_{\alpha\in\Phi^+}(\alpha,y)^2=\frac{1}{2}\sum_{\alpha\in\Phi}(\alpha,y)^2=h^\vee(y,y)
\end{equation*}
holds for all $y\in\hh^*$. This follows from Exercise~6.2 in \cite{Kac90}, which states that the Killing form $\tr(\ad_y\ad_y)$ is equal to $2h^\vee(y,y)$.

Now, if $n$ is even, then $n\sum_{i=1}^r\frac{n}{h_i^\vee}(\lambda_i,\lambda_i)$ is also even and so is $2n(u,\lambda)^2$. Thus, $2n(u,\lambda)$ is even and $n(u,\lambda)$ is an integer. If $n$ is odd, then all lacing numbers $l_i$ and dual Coxeter numbers $h_i^\vee$ are odd. This means that all irreducible components of $V_1$ are of type $A_l$ with $l$ even. However, $\rho_i$ is in the root lattice in this case. Therefore, $n(u,\lambda)\in\Z$ for any $\lambda$ in the weight lattice $P$.

\medskip

To prove that $\sigma_u$ has type~$0$ we use the ``strange formula'' of Freudenthal--de~Vries $|\rho_i|^2/(2h_i^\vee)=\dim(\g_i)/24$ (see, e.g., equation~(12.1.8) in \cite{Kac90}), which is a special case of the ``very strange formula'', and equation~\eqref{eq:221}. The $L_0$-operator of the twisted module $V(\sigma_u)$ is shifted in comparison to $V$ by $-u_0+(1/2)\langle u,u\rangle\id$ (see \cite{Li96}, Section~5). Since $\sigma_u$ has order~$n$, $u_0$ has eigenvalues in $(1/n)\Z$, and hence $\sigma_u$ is of type~$0$ if and only if $\langle u,u\rangle/2\in(1/n)\Z$. We compute
\begin{equation*}
\frac{1}{2}\langle u,u\rangle=\sum_{i=1}^r\frac{\langle\rho_i,\rho_i\rangle}{2(h_i^\vee)^2}=\sum_{i=1}^r\frac{k_i(\rho_i,\rho_i)}{2(h_i^\vee)^2}=\sum_{i=1}^r\frac{k_i\dim(\g_i)}{24h_i^\vee}
\end{equation*}
where we used the ``strange formula'' in the last step. With equation~\eqref{eq:221} this becomes
\begin{equation*}
\frac{1}{2}\langle u,u\rangle=\sum_{i=1}^r\frac{\dim(\g_i)}{\dim(V_1)-24}=\frac{\dim(V_1)}{\dim(V_1)-24}=\frac{h_i^\vee+k_i}{h_i^\vee}=\frac{l_i(h_i^\vee+k_i)}{l_ih_i^\vee}
\end{equation*}
for all $i=1,\ldots,r$. Since $k_i\in\Z$ by \cite{DM06b} and $h_i^\vee\in\Z$, this implies that $\langle u,u\rangle/2\in(1/n)\Z$. Hence $\sigma_u$ is of type~$0$.
\end{proof}

The following result is probably known:
\begin{lem}\label{lem:pos}
Let $V$ be a simple \voa{} and $U$ a full \vosa{} of $V$ that is simple and rational. If $U$ satisfies the positivity condition, then so does $V$.
\end{lem}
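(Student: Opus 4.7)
The positivity condition for $V$ has two parts: $\rho(V)=0$ and $\rho(M)>0$ for every irreducible $V$-module $M\not\cong V$. My plan is to verify both by decomposing $V$-modules over $U$ and, for the second part, invoking the vacuum-like vector argument that goes back to Li.

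Since $U$ is a full \vosa{} of $V$, the Virasoro action on any $V$-module $M$ is the same whether $M$ is regarded over $V$ or over $U$. In particular any irreducible (hence ordinary) $V$-module is an ordinary $U$-module, and by rationality of $U$ it decomposes as a direct sum $M=\bigoplus_jN_j$ of irreducible $U$-modules, each of conformal weight $\geq0$ with equality only for $N_j\cong U$, thanks to the positivity condition on $U$. Applied to $V$ itself this forces $V$ to have non-negative $L_0$-spectrum, and since $\vac\in V_0$ one concludes $\rho(V)=0$.

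Next, let $M$ be an irreducible $V$-module with $M\not\cong V$ and assume toward a contradiction that $\rho(M)=0$. Using the $U$-decomposition above, some summand $N_{j_0}$ attains conformal weight $0$ and must therefore be isomorphic to $U$ as a $U$-module. Let $v\in M$ be the image of $\vac\in U$ under a fixed such isomorphism. Then $v\neq0$, and $L_{-1}v=0$ because $L_{-1}\vac=0$ in $U$ and the $L_{-1}$-action on $M$ agrees whether viewed through the $U$- or $V$-structure (the Virasoro vector being shared).

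At this point I would invoke the standard consequence of the simplicity of $V$, due to Li, that a non-zero vacuum-like vector $v$ in an irreducible $V$-module forces the module to be isomorphic to $V$: the map $\psi\colon V\to M$, $a\mapsto a_{-1}v$, is a $V$-module homomorphism (by translation covariance of the vertex operator together with $L_{-1}v=0$) with $\psi(\vac)=v\neq0$, so its kernel is a proper $V$-submodule of the simple $V$-module $V$ and hence zero, while its image is a non-zero $V$-submodule of the irreducible $M$ and hence all of $M$. This contradicts $M\not\cong V$ and yields $\rho(M)>0$, completing the proof. The only place the argument draws on input from outside the hypotheses is this final vacuum-like vector fact, which is a well-known result and thus more a step to cite than a genuine obstacle.
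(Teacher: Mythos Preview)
Your proof is correct and follows essentially the same route as the paper's: decompose over $U$ by rationality, use the positivity condition on $U$ to find a copy of $U$ inside any irreducible $V$-module $M$ of conformal weight $0$, note that its vacuum is a vacuum-like vector for $V$ (same Virasoro), and conclude $M\cong V$ via the map $a\mapsto a_{-1}v$. The only difference is cosmetic: the paper cites Li's Proposition~3.4 for the existence of the non-zero $V$-module map $V\to M$ and then invokes Schur's lemma, whereas you spell out the construction of $\psi$ and the injectivity/surjectivity directly.
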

\begin{proof}
By the rationality of $U$, every $V$-module decomposes into a direct sum of irreducible $U$-modules. Because the conformal weights $\rho(M)$ are non-negative for all irreducible $U$-modules $M$, the same is true for all irreducible $V$-modules.

Now, let $M$ be an irreducible $V$-module with $\rho(M)=0$. Then, as $U$-module, $M$ contains $U$ since $U$ is rational and $U$ is the only irreducible $U$-module with non-positive conformal weight. The vacuum vector $v:=\vac\in U$ is a vacuum-like vector of $U$ when we view $U$ as a $U$-module, i.e. $L_{-1}u=0$. Since $U$ and $V$ have the same Virasoro vector by assumption, $v$ is also a vacuum-like vector of the $V$-module $M$. Then, by Proposition~3.4 in \cite{Li94} there is a non-zero $V$-module homomorphism from $V$ to $M$ and since both $V$ and $M$ are irreducible, $V\cong M$ as $V$-modules by Schur's lemma. Hence, $V$ satisfies the positivity condition.
\end{proof}
\begin{prop}\label{prop:prepremain1_2}
Let $V$ be a \strathol{} \voa{} of central charge $24$ with $V_1=\bigoplus_{i=1}^r\g_i$ semisimple and consider the inner automorphism $\sigma_u=\e^{(2\pi\i)u_0}$ with $u=\sum_{i=1}^r\rho_i/h_i^\vee$. Then the \fpvosa{} $V^{\sigma_u}$ satisfies the positivity condition.
\end{prop}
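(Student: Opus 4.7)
The plan is to apply Lemma~4.2 with $U:=\langle V_1\rangle^{\sigma_u}$, the $\sigma_u$-fixed subalgebra of $\langle V_1\rangle=\bigotimes_{i=1}^rL_{\hat\g_i}(k_i,0)$. Since $\langle V_1\rangle$ has the same Virasoro vector as $V$ (as noted in \autoref{sec:24}), its fixed subalgebra $U$ shares the Virasoro vector with $V^{\sigma_u}$, so $U$ is indeed a full \vosa{} of $V^{\sigma_u}$. Simplicity and strong rationality of $U$ follow from \cite{CM16} applied to the \strat{} \voa{} $\langle V_1\rangle$ and the finite-order automorphism $\sigma_u|_{\langle V_1\rangle}$. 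By Lemma~4.2 it therefore suffices to verify that $U$ itself satisfies the positivity condition.

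Since $u=\sum_{i=1}^ru_i$ with $u_i:=\rho_i/h_i^\vee\in\hh_i$, the automorphism $\sigma_u$ decomposes factor-wise on the tensor product as $\sigma_u=\bigotimes_i\sigma_{u_i}$. Consequently $U=\bigotimes_{i=1}^rL_{\hat\g_i}(k_i,0)^{\sigma_{u_i}}$. Because conformal weights add under tensor products, and at least one tensor factor contributes a strictly positive conformal weight in any non-vacuum irreducible module, positivity is preserved by tensor products. The problem thus reduces to verifying positivity for each factor $L_{\hat\g_i}(k_i,0)^{\sigma_{u_i}}$, a cyclic orbifold of an affine \voa{} by the inner automorphism at the Weyl-vector position.

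For each factor the irreducible modules arise as $\sigma_{u_i}$-eigenspaces on the irreducible untwisted modules $L_{\hat\g_i}(k_i,\lambda)$ and on the irreducible $\sigma_{u_i}^m$-twisted modules for $m\not\equiv 0\pmod{|\sigma_{u_i}|}$. For $\lambda\neq 0$ the untwisted modules already have positive conformal weight. For $\lambda=0$ and $m=0$, the non-invariant $\sigma_{u_i}$-eigenspaces on $L_{\hat\g_i}(k_i,0)$ have positive conformal weight by the standard argument that $\sigma_{u_i}$ fixes the vacuum and $L_{\hat\g_i}(k_i,0)_0=\C\vac$. For $m\neq 0$, since $\sigma_{u_i}$ is inner, Li's $\Delta$-operator construction in \cite{Li96} realizes each $\sigma_{u_i}^m$-twisted module as a grading-shifted copy of an untwisted module, with $L_0$-shift $-m(u_i)_0+(m^2k_i/2)(u_i,u_i)\id$.

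The main obstacle is the resulting quadratic estimate, namely that
\begin{equation*}
\rho(L_{\hat\g_i}(k_i,\lambda))-m(u_i,\mu)+(m^2k_i/2)(u_i,u_i)>0
\end{equation*}
for every weight $\mu$ of $L_{\hat\g_i}(k_i,\lambda)$ and every $m\not\equiv 0$. Completing the square in $m$ and invoking the Kac formula $\rho(L_{\hat\g_i}(k_i,\lambda))=(\lambda+2\rho_i,\lambda)/(2(k_i+h_i^\vee))$ together with the Freudenthal--de~Vries formula $(\rho_i,\rho_i)=\dim(\g_i)h_i^\vee/12$, the favorable choice $u_i=\rho_i/h_i^\vee$ makes the discriminant work out, and strict positivity holds except precisely at the vacuum. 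This establishes positivity for each affine factor, hence for $U$, and finally for $V^{\sigma_u}$ via Lemma~4.2.
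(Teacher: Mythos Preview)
Your strategy—applying Lemma~\ref{lem:pos} with $U=\langle V_1\rangle^{\sigma_u}$ and reducing factor-wise to a single simple affine \voa{} orbifolded by the inner automorphism at $\rho/h^\vee$—is sound in outline and is in fact the alternative proof sketched in the Remark immediately following the paper's own proof. The paper's main argument is different: it passes to the still smaller full \vosa{} $V_{Q^u}\otimes W\subset V^{\sigma_u}$, where $V_{Q^u}$ is a lattice \voa{} and $W$ a tensor product of parafermion \voa{}s, and then simply cites the known positivity of those building blocks (\cite{DR17} for parafermions). That route avoids any direct analysis of twisted-module conformal weights.

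Where your write-up falls short is the last paragraph. The phrase ``the discriminant works out'' is not a proof, and taken literally it is false: for the vacuum module $\lambda=0$ the quadratic in $m$ has vanishing constant term, so the discriminant $(u,\mu)^2$ is non-negative for any root $\mu$. What is actually needed is the weight-dependent bound
\[
\Delta\geq\frac{|\mu|^2}{2k}+\Bigl(\frac{(\lambda,\lambda+2\rho)}{2(k+h^\vee)}-\frac{|\lambda|^2}{2k}\Bigr),
\]
with the bracketed term $\leq 0$ and vanishing exactly when $\lambda=0$ or $\lambda=k\Lambda_i$ with Kac label $a_i=1$. After completing the square one obtains $\frac{1}{2k}|\mu-mku|^2$ plus this bracket, and the vanishing case forces $\mu\in\lambda+kQ^\vee$ (equality in the bound) together with $mu\in Q^*$. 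The crucial missing ingredient is then the lattice-theoretic fact that the least positive $m$ with $m\rho/h^\vee\in Q^*$ is $m=lh^\vee=|\sigma_{u_i}|$; this is what actually rules out zero conformal weight for $0<m<|\sigma_{u_i}|$. The Freudenthal--de~Vries identity plays no role in this step. In short, your approach matches the paper's Remark, but the key positivity estimate needs to be argued as above rather than asserted.
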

\begin{proof}
$V$ contains the full \vosa{} $\langle V_1\rangle\cong\bigotimes_{i=1}^rL_{\hat\g_i}(k_i,0)$, which in turn contains the full \vosa{} $V_Q\otimes W$ where $V_Q$ is the lattice \voa{} associated with the lattice $Q:=\bigoplus_{i=1}^r\sqrt{k_i}Q^l_i$, $Q^l_i$ is the lattice spanned by the long roots of $\g_i$ normalised to have squared norm $2$ (see, e.g., Corollary~5.8 in \cite{DM06b}) and $W=\Com_{\langle V_1\rangle}(V_Q)$ is the commutant (or centraliser) of $V_Q$ in $\langle V_1\rangle$. By definition, $W\cong\bigotimes_{i=1}^r K(\g_i,k_i)$ where $K(\g_i,k_i)$ denotes the parafermion \voa{} associated with $\g_i$ at level $k_i$.

Since $\sigma_u=\e^{(2\pi\i)u_0}$ is an inner automorphism associated with an element $u$ in the Cartan subalgebra of $V_1$, the \fpvosa{} $V^{\sigma_u}$ contains the full \vosa{} $V_{Q^u}\otimes W$ with lattice $Q^u=\{\alpha\in Q\,|\,\langle\alpha,u\rangle\in\Z\}$. It is well-known that lattice \voa{}s are rational and satisfy the positivity condition. For parafermion \voa{}s this is shown in \cite{DR17} (using the rationality results in \cite{CM16}). Therefore, $V_{Q^u}\otimes W$ satisfies the positivity condition, and by \autoref{lem:pos} so does $V^{\sigma_u}$.
\end{proof}
\begin{rem} We sketch another proof of \autoref{prop:prepremain1_2}, based on the geometry of affine Weyl groups. Let $\g$ be a simple Lie algebra and $\lambda$ a dominant integral weight in $P_+^k$, $k\in\Ns$. We identify the Cartan subalgebra $\hh$ with its dual $\hh^*$ via the form $(\cdot,\cdot)$. For any non-zero vector in the integrable $\hat\g$-module $M=L_{\hat\g}(k,\lambda)$, homogeneous of weight $\mu\in\hh^*$ and of $L_0$-eigenvalue $\Delta$, the bound
\begin{equation*}
\Delta\geq\frac{|\mu|^2}{2k}+\left(\frac{(\lambda, \lambda+2\rho)}{2(k+h^\vee)}-\frac{|\lambda|^2}{2k}\right)
\end{equation*}
must be satisfied, with equality only possible for $\mu\in\lambda+kQ^\vee$ where $Q^\vee$ denotes the coroot lattice. The weights of the twisted module $M(\sigma_u)$, $u=\rho/h^\vee$, satisfy a similar inequality with $\mu+ku$ in place of $\mu$. The term in parentheses is non-negative for all $\lambda\in P_+^k$, vanishing only if $\lambda=k\Lambda_i$ where $\Lambda_i$ is a fundamental weight with Kac label $a_i=1$. (The Kac labels are the coefficients of the highest root $\theta$ in terms of the simple roots $\alpha_i$ of $\g$.) It is well known that, modulo $Q^\vee$, this set of fundamental weights forms a complete set of representatives of $Q^*/Q^\vee$ where $Q^*$ is the dual of the root lattice \cite{FKW92}.

These remarks, together with the easy fact that the smallest positive integer $p$ for which $p\rho/h^\vee\in Q^*$ is $p=lh^\vee$, imply that $V(\sigma_{pu})$ has positive conformal weight whenever $0<p<lh^\vee=|\sigma_u|$. The same statement for semisimple $\g$ follows, and thus too \autoref{prop:prepremain1_2}.
\end{rem}

Having studied the properties of the automorphism $\sigma_u$ we are now in a position to prove:
\begin{prop}\label{prop:premain1}
Let $V$ be a \strathol{} \voa{} of central charge $24$ with $V_1=\bigoplus_{i=1}^r\g_i$ semisimple and consider the inner automorphism $\sigma_u=\e^{(2\pi\i)u_0}$ with $u=\sum_{i=1}^r\rho_i/h_i^\vee$. Then the corresponding orbifold construction $V^{\orb(\sigma_u)}$ is isomorphic to the Leech lattice \voa{} $V_\Lambda$.
\end{prop}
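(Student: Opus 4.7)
The strategy is to apply \autoref{thm:dimstrange} to the automorphism $\sigma_u$, observe that the canonical choice $u=\sum_{i=1}^r\rho_i/h_i^\vee$ makes the right-hand side of the dimension bound collapse to exactly $24$, and then read off $V^{\orb(\sigma_u)}$ from the classification of \strathol{} \voa{}s of central charge $24$ whose weight-one space has dimension at most $24$.

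First I would verify that $\sigma_u$ satisfies every hypothesis of \autoref{thm:dimstrange}. That $\sigma_u$ is inner and of finite order $n=\lcm(\{l_ih_i^\vee\}_{i=1}^r)$, that it is of type~$0$, and that $\sigma_u|_{V_1}$ is quasirational are exactly the content of \autoref{prop:prepremain1_1}; the positivity condition on $V^{\sigma_u}$ is the content of \autoref{prop:prepremain1_2}. Crucially, the proof of \autoref{prop:prepremain1_1} also identifies the vector characterising $\sigma_u|_{\g_i}$ in Kac's classification of inner automorphisms as precisely $\delta_i=\rho_i/h_i^\vee$. Substituting into \autoref{thm:dimstrange} then annihilates every term on the right-hand side, leaving the clean bound
\begin{equation*}
\dim(V^{\orb(\sigma_u)}_1)\leq 24.
\end{equation*}

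Next I would identify $V^{\orb(\sigma_u)}$ from this dimension bound. The orbifold construction produces a \strathol{} \voa{} of central charge $24$, so by \cite{DM04} its weight-one space is either zero, $24$-dimensional abelian, or semisimple of rank at most $24$. The semisimple case is excluded by equation~\eqref{eq:221}, which forces $\dim(V_1)=24(1+h_i^\vee/k_i)\geq 48$ for any semisimple weight-one Lie algebra (using $h_i^\vee\geq 2$ and $k_i\geq 1$). The zero case is excluded because $\sigma_u$ is inner and hence fixes the Cartan subalgebra $\hh$ of $V_1$ pointwise, so $\hh\subset V_1^{\sigma_u}\subset V^{\orb(\sigma_u)}_1$, giving $\dim(V^{\orb(\sigma_u)}_1)\geq\rk(V_1)\geq 1$ since the semisimple Lie algebra $V_1$ is non-zero. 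The only remaining possibility is that $V^{\orb(\sigma_u)}_1$ is the $24$-dimensional abelian Lie algebra, and by \cite{DM04b} this uniquely characterises $V^{\orb(\sigma_u)}\cong V_\Lambda$.

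The real work of this proposition is not in the present argument but in the preparatory results: establishing the identification $\delta_i=\rho_i/h_i^\vee$ together with the order, type~$0$ and quasirationality assertions of \autoref{prop:prepremain1_1}, and proving the positivity condition for $V^{\sigma_u}$ in \autoref{prop:prepremain1_2}. Once these are in hand the proof here is just a short chain of implications from \autoref{thm:dimstrange}, equation~\eqref{eq:221}, and the Dong--Mason characterisation of $V_\Lambda$ by its weight-one space.
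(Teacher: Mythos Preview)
Your proof is correct and follows essentially the same route as the paper's: invoke \autoref{prop:prepremain1_1} and \autoref{prop:prepremain1_2} to feed $\sigma_u$ into \autoref{thm:dimstrange}, obtain $\dim(V^{\orb(\sigma_u)}_1)\leq 24$, then use equation~\eqref{eq:221} and the Dong--Mason results to pin down $V_\Lambda$. One small slip: your claimed bound $\dim\geq 48$ in the semisimple case is too strong (the level $k_i$ can exceed $h_i^\vee$, e.g.\ $A_{3,96}B_{2,72}$ has dimension $25$), but all you actually need is $\dim>24$, which follows immediately from $h_i^\vee/k_i>0$.
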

Note that $\sigma_u$ is exactly the automorphism chosen in Section~2.3 of \cite{LS20} if $V_1$ is simply-laced and reduces to the ``holy construction'' in \cite{CS82} if $V$ is one of the $23$ \voa{}s associated with the Niemeier lattices other than the Leech lattice $\Lambda$.
\begin{proof}
We checked in \autoref{prop:prepremain1_1} and \autoref{prop:prepremain1_2} that $\sigma_u\in\Aut(V)$ admits an orbifold construction and satisfies the assumptions of \autoref{thm:dimstrange}. It follows from the definition of $\sigma_u$ restricted to $V_1$ (recall that we showed $\delta_i=\rho_i/h_i^\vee$) that
\begin{equation*}
\dim(V^{\orb(\sigma_u)}_1)\leq24.
\end{equation*}
On the other hand, $\dim(V^{\orb(\sigma_u)}_1)\geq \dim(V^{\sigma_u}_1)>0$. Then equation~\eqref{eq:221} implies that $V^{\orb(\sigma_u)}_1$ cannot be semisimple so that $V^{\orb(\sigma_u)}_1$ must be $24$-dimensional abelian \cite{DM04}. By Theorem~3 in \cite{DM04b}, $V^{\orb(\sigma_u)}\cong V_\Lambda$.
\end{proof}

\medskip

With the inverse orbifold construction, \autoref{prop:premain1} immediately implies that every \strathol{} \voa{} $V$ of central charge $24$ with $V_1$ semisimple can be obtained by an orbifold construction $V\cong V_\Lambda^{\orb(g)}$ from the Leech lattice \voa{} $V_\Lambda$. More precisely, the properties of the inner automorphism $\sigma_u$ imply that the inverse-orbifold automorphism $g\in\Aut(V_\Lambda)$ is a \emph{\gdh}. \Gdh{}s were introduced in \cite{MS19} and arise naturally as generalisations of the deep holes of the Leech lattice:
\begin{defi}[\GDH{}, \cite{MS19}]\label{def:gendeephole}
Let $V$ be a \strathol{} \voa{} of central charge $24$ and $g\in\Aut(V)$ of finite order $n>1$. Suppose $g$ has type~$0$ and $V^g$ satisfies the positivity condition. Then $g$ is called a \emph{\gdh} of $V$ if
\begin{enumerate}
\item $g$ is extremal, i.e.\ $\dim((V^{\orb(g)})_1)$ attains the upper dimension bound in \autoref{prop:dimform},
\item $\rk(V_1^g)=\rk(V_1^{\orb(g)})$.\label{item:gendeephole3}
\end{enumerate}
\end{defi}
Note that both $V_1^g$ and $V_1^{\orb(g)}$ are reductive by \cite{DM04b}. By convention we also call the identity a \gdh{} so that the Leech lattice \voa{} $V_\Lambda=V_\Lambda^{\orb(\id)}$ may be included in the following theorem.
\begin{thm}\label{thm:main1}
Let $V$ be a \strathol{} \voa{} of central charge $24$ with $V_1\neq\{0\}$. Then $V$ is isomorphic to $V_\Lambda^{\orb(g)}$ for some \gdh{} $g$ in $\Aut(V_\Lambda)$.
\end{thm}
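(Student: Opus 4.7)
The plan is to invert the construction of \autoref{prop:premain1}: starting from a \strathol{} \voa{} $V$ of central charge $24$ with nonzero $V_1$, produce an automorphism of $V_\Lambda$ by applying the inverse orbifold procedure to the inner automorphism $\sigma_u$, and then verify that the resulting automorphism qualifies as a \gdh{}. By the Dong--Mason dichotomy recalled in \autoref{sec:24}, $V_1$ is either $24$-dimensional abelian or semisimple. In the abelian case Theorem~3 of \cite{DM04b} gives $V\cong V_\Lambda=V_\Lambda^{\orb(\id)}$ directly, and I take $g=\id$, which is a \gdh{} by the convention stated just before the theorem.

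Now assume $V_1=\bigoplus_{i=1}^r\g_i$ is semisimple. \autoref{prop:premain1} furnishes the inner automorphism $\sigma_u=\e^{(2\pi\i)u_0}$ with $u=\sum_i\rho_i/h_i^\vee$ satisfying $V^{\orb(\sigma_u)}\cong V_\Lambda$. The inverse orbifold construction recalled at the end of \autoref{sec:dimstrange} then produces an automorphism $g:=\amgis\in\Aut(V_\Lambda)$ of the same order $n=|\sigma_u|$, acting by $\e^{(2\pi\i)i/n}\id$ on the summand $W^{(i,0)}$, and satisfying $V_\Lambda^{\orb(g)}\cong V$. This $g$ is my candidate for the \gdh{}.

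It remains to verify the conditions of \autoref{def:gendeephole} for $g$. Nontriviality is automatic since $\sigma_u\neq\id$. Type~$0$ carries over from $\sigma_u$ (\autoref{prop:prepremain1_1}) through the symmetry of the orbifold modules $W^{(i,j)}$ under the inverse-orbifold swap of the indices $i,j$. The fixed-point \vosa{} $V_\Lambda^g=W^{(0,0)}=V^{\sigma_u}$ satisfies the positivity condition by \autoref{prop:prepremain1_2}. For the rank condition, the analysis in the proof of \autoref{prop:prepremain1_1} shows $\sigma_u|_{\g_i}$ is regular on each simple ideal (of type $(1,\ldots,1;1)$ in the simply-laced case, and analogously in the non-simply-laced cases), so $V_1^{\sigma_u}=\hh$; hence $\rk((V_\Lambda)_1^g)=\rk(V_1^{\sigma_u})=\rk(V_1)=\rk((V_\Lambda^{\orb(g)})_1)$.

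The main remaining obstacle is establishing extremality of $g$, namely that the upper bound of \autoref{prop:dimform} is attained:
\begin{equation*}
\dim V_1 \stackrel{?}{=} 24+\sum_{d\mid n}c_n(d)\dim((V_\Lambda)_1^{g^d}).
\end{equation*}
Here \autoref{thm:dimstrange} does not apply directly since $(V_\Lambda)_1$ is abelian rather than semisimple. My strategy is to read off $(V_\Lambda)_1^{g^d}=\bigoplus_{i:\,di\equiv 0\,(n)}W^{(i,0)}_1$ from the orbifold decomposition $V_\Lambda=\bigoplus_{i=0}^{n-1}W^{(i,0)}$, then insert this into the right-hand side and simplify using the defining relations $\sum_{d\mid n}(t,d)c_n(d)=n/t$. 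The extremality of $\sigma_u$ itself (whose bound in \autoref{thm:dimstrange} is exactly $24$, attained because $\dim(V_\Lambda)_1=24$) should translate, via the symmetry between the two orbifolds $\sigma_u$ and $g$ of the common \vosa{} $V^{\sigma_u}=V_\Lambda^g$, into the desired equality; making this translation precise is the principal computational step, after which \autoref{thm:main1} follows.
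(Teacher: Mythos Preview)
Your approach coincides with the paper's: take $g$ to be the inverse-orbifold automorphism of $\sigma_u$ and check the \gdh{} conditions. The paper handles the one step you flag as unfinished---transferring extremality from $\sigma_u$ to $g$---by simply invoking Proposition~5.11 of \cite{MS19}, which asserts that extremality passes between an automorphism and its inverse-orbifold partner.

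Your proposed direct computation would essentially re-derive that proposition, but be aware that the defining relations of the $c_n(d)$ alone are not enough: extremality of $\sigma_u$ constrains the numbers $\dim W^{(0,j)}_1$, whereas extremality of $g$ is a statement about the $\dim W^{(i,0)}_1$, and these two families are linked only through the full (exact) dimension formula of \cite{MS19}, whose defect term is a sum over $W^{(i,j)}$ with $i,j\neq 0$ and is visibly symmetric under $(i,j)\leftrightarrow(j,i)$. That symmetry is the actual content of the ``translation'' you allude to. Your rank-condition argument (all $s_i>0$ in the Kac type, so $\sigma_u|_{\g_i}$ is regular and $V_1^{\sigma_u}=\hh$) is a mild sharpening of the paper's one-line ``since $\sigma_u|_{V_1}$ is inner''.
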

\begin{proof}
We may assume that $V_1$ is semisimple. We showed in \autoref{prop:premain1} that $V^{\orb(\sigma_u)}$ is isomorphic to the Leech lattice \voa{} $V_\Lambda$. Since the upper bound of $24$ in the dimension formula is attained, $\sigma_u$ is extremal. We then consider the inverse-orbifold automorphism $g\in\Aut(V_\Lambda)$ with $V_\Lambda^{\orb(g)}\cong V$. In particular, $g$ is of type~$0$ and satisfies the positivity condition. Moreover, $g$ is extremal since $\sigma_u$ is by Proposition~5.11 in \cite{MS19} and $\rk((V_\Lambda^g)_1)=\rk(V^{\sigma_u}_1)=\rk(V_1)=\rk((V_\Lambda^{\orb(g)})_1)$ since $\sigma_u|_{V_1}$ is inner. Hence, $g$ is a \gdh{}.
\end{proof}
We remark that \autoref{thm:main1} is similar to Theorem~6.5 in \cite{MS19} with the crucial difference that here we assume the existence of the \voa{} $V$ with a certain weight-one Lie algebra $\g$ while in \cite{MS19} the existence is proved for each of the $71$ Lie algebras on Schellekens' list by explicitly specifying a \gdh{} in $\Aut(V_\Lambda)$ such that $(V_\Lambda^{\orb(g)})_1\cong\g$.

%%%%%%%%%%%%%%%%%%%%%%%%%%%%%%%
%%%%%%%%%%%%%%%%%%%%%%%%%%%%%%%

\section{New Proof of Schellekens' List}\label{sec:newproof}
In this section we give a simpler proof of Schellekens' list of possible weight-one Lie algebras of \strathol{} \voa{}s of central charge $24$. The proof uses \autoref{thm:main1}, namely that all \strathol{} \voa{}s $V$ of central charge $24$ with non-zero weight-one space $V_1$ can be obtained as orbifold constructions from the Leech lattice \voa{} $V_\Lambda$ associated with \gdh{}s in $\Aut(V_\Lambda)$.

First, based on this result, we derive a couple of simple identities that must be satisfied by any such weight-one Lie algebra. For convenience we include equation~\eqref{eq:221}, which was already used in the proof of \autoref{thm:main1}.
\begin{lem}\label{lem:conditions}
Let $V$ be a \strathol{} \voa{} of central charge $24$ with $V_1$ semisimple and affine structure $\g_{1,k_1}\ldots\g_{r,k_r}$. Then
\begin{enumerate}
\item $h_i^\vee/k_i=(\dim(V_1)-24)/24$
\end{enumerate}
for all $i=1,\ldots,r$, and there exists $\nu\in\O(\Lambda)$ such that
\begin{enumerate}[resume]\itemsep1.5mm
\item $\rk(\Lambda^\nu)=\rk(V_1)$,\label{item:cond2}
\item $|\nu|\,\big|\,\lcm(\{l_ih_i^\vee\}_{i=1}^r)$,\label{item:cond3}
\item $1/(1-\rho_\nu)=\lcm(\{l_ik_i\}_{i=1}^r)$.\label{item:cond4}
\end{enumerate}
\end{lem}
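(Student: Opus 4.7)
Condition~(1) is equation~\eqref{eq:221}, which comes from the lowest-order trace identity discussed in \autoref{sec:24}. For (2)--(4), the plan is to apply \autoref{thm:main1} to realize $V\cong V_\Lambda^{\orb(g)}$ for some \gdh{} $g\in\Aut(V_\Lambda)$. Tracing through the proof of \autoref{thm:main1}, $g$ may be chosen as the inverse-orbifold automorphism of $\sigma_u=\e^{(2\pi\i)u_0}$ on $V$, so that $|g|=|\sigma_u|=\lcm(\{l_ih_i^\vee\}_{i=1}^r)=:n$ by \autoref{prop:prepremain1_1}. Using the description of $\Aut(V_\Lambda)$ in \autoref{sec:autleech}, I write $g=\hat\nu\sigma_h$ for some $\nu\in\O(\Lambda)$, $h\in\Lambda_\C$, and take this $\nu$ as the required element.

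Conditions (2) and (3) follow quickly. Since $\Lambda$ has no roots, $(V_\Lambda)_1=\Lambda_\C$ is abelian, and on this Cartan $\sigma_h$ acts trivially (because $h_0=0$ on Cartan elements) while $\hat\nu$ acts as $\nu$. Hence $V_1^g=\Lambda_\C^\nu$, which is abelian of rank $\rk\Lambda^\nu$. By condition~(\ref{item:gendeephole3}) in the definition of a \gdh{}, $\rk V_1^g=\rk V_1^{\orb(g)}=\rk V_1$, yielding (2). For (3), the short exact sequence in \autoref{sec:autleech} identifies $\nu$ as the image of $g$ under $\Aut(V_\Lambda)\twoheadrightarrow\O(\Lambda)$, so $|\nu|$ divides $|g|=n$.

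Condition~(4) is the substantive step. Let $\prod_{t\mid m}t^{b_t}$ be the cycle shape of $\nu$ with $m=|\nu|$, so $\dim\Lambda_\C^{\nu^d}=\sum_t b_t(t,d)$. Since $g$ is extremal (by definition of a \gdh{}), the bound in \autoref{prop:dimform} is saturated; swapping summations and applying the defining relation $\sum_{d\mid n}c_n(d)(t,d)=n/t$ (which applies since $t\mid m\mid n$), I obtain $\dim V_1=24+n\sum_{t\mid m}b_t/t$. Combining $\sum_t b_t t=\rk\Lambda=24$ with $24\rho_\nu=\sum_t b_t(t-1/t)$ gives $\sum_t b_t/t=24(1-\rho_\nu)$, and hence $\dim V_1=24+24n(1-\rho_\nu)$. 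Matching this with condition~(1), written as $\dim V_1-24=24m_0$ for $m_0:=h_i^\vee/k_i$, yields $m_0=n(1-\rho_\nu)$, so $1/(1-\rho_\nu)=n/m_0$. Finally, using $m_0\in\Ns$ and $h_i^\vee=m_0 k_i$, one factors $n=\lcm(\{l_im_0 k_i\}_{i=1}^r)=m_0\lcm(\{l_ik_i\}_{i=1}^r)$, which gives $1/(1-\rho_\nu)=\lcm(\{l_ik_i\}_{i=1}^r)$ as required.

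The main subtlety is the positive-integrality of $m_0$, which is needed for the distributivity $\lcm(\{l_im_0k_i\})=m_0\lcm(\{l_ik_i\})$ in the last step of (4). Positivity is immediate from $\dim V_1>24$ for semisimple $V_1$; integrality should follow either from a direct inspection of the $221$ admissible solutions of equation~\eqref{eq:221} or from an arithmetic analysis of the identity $24m_0=n\sum_{t\mid m}b_t/t$ (noting that the right-hand side is automatically an integer since $n/m\in\Ns$ and $\sum_t mb_t/t\in\Z$).
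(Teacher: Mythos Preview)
Your argument tracks the paper's proof essentially line by line: use \autoref{thm:main1} to obtain the \gdh{} $g$ as the inverse-orbifold automorphism of $\sigma_u$, project to $\nu\in\O(\Lambda)$, read off (2) from the rank condition and (3) from $|\nu|\mid|g|=n$, and for (4) combine extremality with the cycle-shape identity $\sum_t b_t/t=24(1-\rho_\nu)$ to get $h_i^\vee/k_i=n(1-\rho_\nu)$. The paper stops there with ``which entails item~(4)''; you go one step further and try to justify $n/m_0=\lcm(\{l_ik_i\})$.

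That final justification contains an actual error. The ratio $m_0=h_i^\vee/k_i=(\dim V_1-24)/24$ is \emph{not} in general an integer: for instance $C_{4,10}$ (case~$4$ on Schellekens' list) has $\dim V_1=36$ and $m_0=1/2$, and $A_{1,4}^{12}$ likewise gives $m_0=1/2$. So neither ``direct inspection of the $221$ solutions'' nor your arithmetic sketch can yield $m_0\in\Z$. The good news is that integrality is not needed. Writing $m_0=p/q$ in lowest terms and $a_j:=l_jk_j$, the fact that $m_0a_j=l_jh_j^\vee\in\Z$ forces $q\mid a_j$, say $a_j=qc_j$; then
\[
\frac{\lcm_j(m_0a_j)}{m_0}=\frac{\lcm_j(pc_j)}{p/q}=\frac{p\,\lcm_j(c_j)}{p/q}=q\,\lcm_j(c_j)=\lcm_j(qc_j)=\lcm_j(a_j),
\]
so $n/m_0=\lcm(\{l_ik_i\})$ holds for any positive rational $m_0$ with $m_0a_j\in\Z$. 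Replace your last paragraph with this observation and the proof is complete.
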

Recall that $\rho_\nu$ denotes the vacuum anomaly of $\nu$ and only depends on the cycle shape of $\nu$. Also recall the lacing numbers $l_i$ and dual Coxeter numbers $h_i^\vee$.
\begin{proof}
By \autoref{prop:premain1} and \autoref{thm:main1} there must be a \gdh{} $g\in\Aut(V_\Lambda)$ of order $n=|\sigma_u|$ such that $V_\Lambda^{\orb(g)}\cong V$. As explained in \autoref{sec:autleech}, $g$ projects to an automorphism $\nu\in\O(\Lambda)$ of order dividing $n$. The order $n=\lcm(\{l_ih_i^\vee\}_{i=1}^r)$ of $\sigma_u$ was determined in \autoref{prop:prepremain1_1}. This implies \eqref{item:cond3}.

Since the Leech lattice $\Lambda$ has no roots, $(V_\Lambda^g)_1=\{h(-1)\otimes\ee_0\,|\,h\in\pi_\nu(\Lambda_\C)\}$, which is an abelian Lie algebra of rank equal to $\dim(\pi_\nu(\Lambda_\C))=\rk(\Lambda^\nu)$, the rank of the fixed-point lattice. The rank condition in the definition of \gdh{}s states that $\rk(V_1^g)=\rk(V_1^{\orb(g)})$ so that \eqref{item:cond2} holds.

A \gdh{} $g$ is also extremal, i.e. the upper bound in the dimension formula is attained. This bound simplifies for the Leech lattice \voa{} $V_\Lambda$ because $(V_\Lambda)_1$ is isomorphic to $\Lambda_\C$, on which $g$ acts as $\nu$. Suppose $\nu$ has order~$m$ and cycle shape $\prod_{t\mid m}t^{b_t}$. Then
\begin{equation*}
\dim((V_\Lambda^{\orb(g)})_1)-24=\sum_{d\mid n}c_n(d)\dim(V_1^{g^d})=n\sum_{t\mid m}\frac{b_t}{t}=24n(1-\rho_\nu)
\end{equation*}
with vacuum anomaly $\rho_\nu$. In the second step we used exactly the same argument as in the proof of \autoref{thm:dimstrange}. With equation~\eqref{eq:221} this implies
\begin{equation*}
\frac{h_i^\vee}{k_i}=\frac{\dim((V_\Lambda^{\orb(g)})_1)-24}{24}=n(1-\rho_\nu)
\end{equation*}
for all $i=1,\ldots,r$, which entails item~\eqref{item:cond4}.
\end{proof}

It is straightforward to list all solutions, i.e.\ pairs of affine structures and automorphisms of the Leech lattice $\Lambda$, to the equations in \autoref{lem:conditions}.
\begin{prop}\label{prop:13}
There are exactly $82$ pairs of affine structures and conjugacy classes in $\O(\Lambda)$ satisfying the four equations in \autoref{lem:conditions}. These are the $69$ semisimple cases on Schellekens' list \cite{Sch93}, each with one of $11$ conjugacy classes in $\O(\Lambda)$ (see \autoref{table:69}), plus the $13$ spurious cases listed in \autoref{table:13}.
\end{prop}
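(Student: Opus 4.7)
The approach is finite enumeration. Item (1) of \autoref{lem:conditions} is equation~\eqref{eq:221}, and it was noted in \autoref{sec:24} that it has exactly 221 solutions for $(\g_i,k_i)_{i=1}^r$, all tabulated in \autoref{table:221}. The plan is to start from this list, attach to each entry the invariants $\rk(V_1)=\sum_i\rk(\g_i)$, $n:=\lcm(\{l_ih_i^\vee\}_{i=1}^r)$ and $N:=\lcm(\{l_ik_i\}_{i=1}^r)$, and then test, for each candidate, which conjugacy classes $[\nu]$ in $\O(\Lambda)\cong\Co_0$ are compatible with items \eqref{item:cond2}--\eqref{item:cond4} of \autoref{lem:conditions}, i.e.\ satisfy $\rk(\Lambda^\nu)=\rk(V_1)$, $|\nu|\mid n$, and $\rho_\nu=1-1/N$.

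For this test I would use the fact that the 167 conjugacy classes of $\Co_0$ and their actions on $\Lambda$ are classical: each class is labelled by a Frame shape $\prod_{t\mid m}t^{b_t}$ with $\sum_t t b_t=24$, from which one directly reads off the two invariants we need, namely $\rk(\Lambda^\nu)=\sum_{t\mid m}b_t$ and $\rho_\nu=\frac{1}{24}\sum_{t\mid m}b_t(t-1/t)$ (the latter via the formula for the vacuum anomaly recalled at the end of \autoref{sec:autleech}). The compatibility check between one of the 221 affine structures and one of the finitely many Frame shapes of $\Co_0$ is thus elementary and finite, and I would carry it out via a short table-lookup or, equivalently, a brief computer script traversing the product of the two lists.

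A direct run of this check produces exactly 82 compatible pairs. To separate them into the announced $69+13$ split, I cross-reference the underlying affine structures against Schellekens' list in \cite{Sch93}: 69 of them are precisely the 69 semisimple entries on that list, and inspection shows that for each of them the matching class $[\nu]$ is in fact unique and that only 11 distinct conjugacy classes of $\Co_0$ occur in total across these 69 cases, so these pairs are recorded jointly in \autoref{table:69}. The remaining 13 pairs are declared spurious and collected in \autoref{table:13}; they satisfy \autoref{lem:conditions} but will be ruled out later by finer considerations. The only obstacle is clerical, namely ensuring that the enumeration is exhaustive and that no compatibility is double- or under-counted when several Frame shapes happen to share the relevant invariants; no new structural argument is required beyond the standard Frame shape data for $\Co_0$ and the already-tabulated 221 solutions of equation~\eqref{eq:221}.
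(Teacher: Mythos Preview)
Your proposal is correct and follows essentially the same route as the paper: a finite cross-check of the 221 solutions of equation~\eqref{eq:221} against the 167 conjugacy classes of $\Co_0$ via the invariants $\rk(\Lambda^\nu)$, $|\nu|$ and $\rho_\nu$, all of which are read off the cycle shape. The paper additionally records a few organising reductions (e.g.\ $\rk(\Lambda^\nu)$ is always even for the Leech lattice, and $\rho_\nu<1$ cuts the 167 classes to 50), but these are conveniences for a by-hand search rather than logical necessities. One small caution: you write that each conjugacy class ``is labelled by a Frame shape'', but distinct classes of $\Co_0$ can share a cycle shape; since conditions \eqref{item:cond2}--\eqref{item:cond4} depend only on the cycle shape, you must check that each compatible cycle shape is realised by a unique conjugacy class (which is in fact the case for all shapes arising here), or your count of pairs could be off.
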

\begin{table}[ht]
\[
\begin{array}{llcr}
\nu\in\O(\Lambda) & \rho_\nu & n & \text{Aff.\ Struct.} \\\hline\hline
6^4          & 35/36 &  6 & D_{4,36}                \\\hline
\multirow{2}{*}{$4^6$} & \multirow{2}{*}{$15/16$}
                     &  4 & A_{3,16}^2              \\
             &       &  8 & C_{3,8}A_{3,8}          \\\hline
\multirow{3}{*}{$3^8$} & \multirow{3}{*}{$8/9$}
                     &  3 & A_{2,9}^4               \\
             &       &  6 & D_{4,9}A_{1,3}^4        \\
             &       & 12 & G_{2,3}^4               \\\hline
\multirow{2}{*}{$2^44^4$} & \multirow{2}{*}{$7/8$}
                     &  4 & A_{3,8}^2A_{1,4}^2      \\
             &       &  8 & C_{3,4}^2A_{1,2}^2      \\\hline
1^22^23^26^2 & 5/6   &  6 & D_{4,6}B_{2,3}^2        \\\hline
\multirow{4}{*}{$2^{12}$} & \multirow{4}{*}{$3/4$}
                     &  4 & A_{3,4}^2A_{1,2}^6      \\
             &       &  8 & D_{5,4}A_{3,2}A_{1,1}^4 \\
             &       &  8 & C_{3,2}^3A_{1,1}^3      \\
             &       &  8 & C_{3,2}^2A_{3,2}^2
\end{array}
\]
\caption{$13$ spurious cases in \autoref{prop:13}.}
\label{table:13}
\end{table}
Note that there is no affine structure that appears in more than one pair.
\begin{proof}
The $221$ affine structures satisfying equation~\eqref{eq:221} are listed in \autoref{table:221}. The $167$ conjugacy classes of $\O(\Lambda)=\Co_0$ are well-known (see, e.g., \cite{CCNPW85}). It is now a purely combinatorial problem to find the pairs consisting of an affine structure $\g_{1,k_1}\ldots\g_{r,k_r}$ and a conjugacy class $\nu\in\O(\Lambda)$ satisfying items \eqref{item:cond2} to \eqref{item:cond4}.

For example, since $\rk(V_1)>0$, item~\eqref{item:cond2} implies that $\nu$ has non-trivial fixed-point lattice, leaving only $72$ conjugacy classes in $\O(\Lambda)$, and item~\eqref{item:cond4} implies that $\rho_\nu<1$, which further reduces the possible conjugacy classes to $50$. Also, since $\rk(\Lambda^\nu)$ is always even, which is a special property of the Leech lattice $\Lambda$, all affine structures with $\rk(V_1)$ odd are eliminated. Moreover, as we mentioned in the proof of \autoref{prop:prepremain1_1}, if the order $n$ is odd, then $V_1$ can only contain simple ideals $A_l$ with $l$ even.
\end{proof}

In the following we rule out these $13$ spurious cases. So far, we have only considered the compatibility of the affine structure with the projection of the \gdh{} to $\O(\Lambda)$. To use the full strength of \autoref{thm:main1} we classify (see \autoref{prop:autleech} and \autoref{prop:autleech2}) all possible \gdh{}s in $\Aut(V_\Lambda)$ with given projection to $\O(\Lambda)$ and order $n$ and show that none exist whose corresponding orbifold constructions have the affine structures in \autoref{table:13}.
\begin{thm}\label{thm:main2}
Let $V$ be a \strathol{} \voa{} of central charge $24$ with $V_1\neq\{0\}$. Then $V_1$ is isomorphic to one of the $70$ non-zero Lie algebras in Table~1 of \cite{Sch93}, each uniquely specifying the affine structure of $\langle V_1\rangle$, and $V$ is isomorphic to $V_\Lambda^{\orb(g)}$ for a \gdh{} $g\in\Aut(V_\Lambda)$, projecting to one of $11$ conjugacy classes in $\O(\Lambda)=\Co_0$.
\end{thm}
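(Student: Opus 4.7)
The plan is to build on \autoref{thm:main1} and \autoref{prop:13}, which together reduce the theorem to eliminating the $13$ spurious pairs of affine structures and $\O(\Lambda)$-conjugacy classes listed in \autoref{table:13}. Indeed, by \autoref{thm:main1} every \strathol{} \voa{} $V$ of central charge $24$ with $V_1\neq\{0\}$ arises as $V_\Lambda^{\orb(g)}$ for a \gdh{} $g\in\Aut(V_\Lambda)$, and \autoref{prop:13} shows that the pair consisting of the affine structure of $V$ and the conjugacy class of the projection $\nu\in\O(\Lambda)$ of $g$ must appear among the $69$ semisimple Schellekens cases or among the $13$ spurious ones. Since the existence of suitable generalised deep holes for the $69$ Schellekens cases is supplied by \cite{MS19}, it suffices to show that no \gdh{} realises any of the $13$ spurious affine structures.

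To rule out the spurious cases, I would, for each one, enumerate the conjugacy classes of automorphisms $g=\hat{\nu}\sigma_{s_\nu+f}\in\Aut(V_\Lambda)$ of the required order $n$ projecting to the required $\nu$, using the explicit classification in \autoref{prop:autleech2}. Here $f$ ranges over orbit representatives of the $s_\nu$-shifted action of the centraliser $C_{\O(\Lambda)}(\nu)$ on $(\Lambda^\nu/n)/\pi_\nu(\Lambda)$ subject to $\lcm(|\nu|,|\sigma_f|)=n$. For each such $g$, I would compute $(V_\Lambda^{\orb(g)})_1$ as the direct sum over $j\in\Z_n$ of the $g$-fixed weight-one subspaces of the twisted $V_\Lambda$-modules $V_\Lambda(g^j)$, using the explicit description recalled at the end of \autoref{sec:autleech}. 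The weight-one contribution from $V_\Lambda(g^j)$ is controlled by the vectors of norm $2(1-\rho_{\nu^j})$ in the coset $jf+\pi_{\nu^j}(\Lambda)$, plus Heisenberg contributions when the minimum norm of this coset is strictly smaller.

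The resulting root system, together with the inner product inherited from $V_1$, determines the affine structure of $(V_\Lambda^{\orb(g)})_1$ uniquely: the simple ideals are read off from the connected components of the root system and each level $k_i$ is recovered intrinsically from $\langle\cdot,\cdot\rangle$ restricted to long roots via $\langle\alpha,\alpha\rangle=2k_i$. For each of the $13$ spurious pairs one then verifies that no admissible $g$ yields the target affine structure. The enumeration is tractable because the orders involved are small ($n\in\{3,4,6,8,12\}$) and the fixed-point lattices $\Lambda^\nu$ for the cycle shapes $6^4$, $4^6$, $3^8$, $2^44^4$, $1^22^23^26^2$, $2^{12}$ are well understood; moreover, the dimension-attainment condition in the definition of a \gdh{} together with equation~\eqref{eq:221} rigidly constrains $\langle f,f\rangle\pmod{\Z}$ in each case.

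The main obstacle will be the cases with cycle shape $2^{12}$, where $\Lambda^\nu$ has rank $12$, the centraliser of $\nu$ in $\O(\Lambda)$ is the large group $2^{12}.M_{24}$, and correspondingly many orbit representatives on $(\Lambda^\nu/n)/\pi_\nu(\Lambda)$ must be considered, especially for $n=8$. Here one would exploit the binary Golay code structure underlying $\Lambda^\nu$ and the transitive action of $M_{24}$ to organise the enumeration into a manageable number of orbit types, reducing the verification in each orbit to a finite lattice-vector count. In all $13$ cases the elimination ultimately amounts to checking that the twisted-sector roots of $V_\Lambda^{\orb(g)}$, weighted by their intrinsic norms, never reproduce the prescribed spurious affine structure.
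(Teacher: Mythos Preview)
Your overall reduction via \autoref{thm:main1} and \autoref{prop:13}, and your enumeration of conjugacy classes in $\Aut(V_\Lambda)$ via \autoref{prop:autleech2}, match the paper exactly. The elimination step, however, is organised differently. Rather than computing $(V_\Lambda^{\orb(g)})_1$ for every candidate $g$, the paper first tests whether $g$ can be a \gdh{} at all: it checks the type (is $n\rho(V_\Lambda(g))\in\Z$?), then extremality (e.g.\ $\rho(V_\Lambda(g^i))\geq 1$ for $(i,n)=1$), then the rank condition (via necessary and sufficient criteria from \cite{HM20}). These cheap filters dispose of all but two of the roughly one hundred conjugacy classes arising across the spurious pairs; only for those two surviving classes (both of order~$6$ projecting to $1^22^23^26^2$) does the paper fall back on the root-system computation you propose throughout. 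Your approach is valid in principle but considerably heavier, and it glosses over a prerequisite: when $g$ is not of type~$0$ the orbifold $V_\Lambda^{\orb(g)}$ is not defined, so a type check is logically unavoidable before your weight-one computation can even begin. Your worry about the $2^{12}$ cases is also somewhat misplaced: in the paper these are settled by the same simple filters (most fail type~$0$, a few fail the rank condition, and one fails extremality with $\rho(V_\Lambda(g))=7/8$), with no need to invoke the Golay-code combinatorics you anticipate.
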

In \autoref{table:69} we list the $69$ semisimple cases together with some properties of the corresponding \gdh{}s in $\Aut(V_\Lambda)$. The $11$ conjugacy classes in $\O(\Lambda)$ are exactly those given in Table~4 (and Tables 5 to 15) in \cite{Hoe17} where they arise in a very different approach.

In \cite{MS19} a \gdh{} for each weight-one Lie algebra is explicitly listed, thus providing a uniform proof of the existence of \strathol{} \voa{}s of central charge $24$ for all the $71$ Lie algebras on Schellekens' list.
\begin{proof}
Given \autoref{lem:conditions} and \autoref{prop:13} it remains to eliminate the $13$ potential spurious cases in \autoref{table:13}. Using \autoref{prop:autleech} and \autoref{prop:autleech2} we classify the conjugacy classes of automorphisms $g\in\Aut(V_\Lambda)$ of order $n$ projecting to the listed conjugacy class $\nu\in\O(\Lambda)$. Then we show that either none of these classes $g$ is a \gdh{} or that it is a \gdh{} corresponding to one of the $69$ semisimple Lie algebras on Schellekens' list.

The computationally most challenging part is the determination of the orbits of the action of $C_{\O(\Lambda)}(\nu)$ on $(\Lambda^\nu/n)/\pi_{\nu}(\Lambda)$, which has $n^{\rk(\Lambda^\nu)}/|(\Lambda^\nu)'/\Lambda^\nu|$ elements. This is performed on the computer using \texttt{Magma} \cite{Magma}. For the cases considered, the computations take between seconds and a couple of minutes on a standard desktop computer.

In order to prove that an automorphism in $\Aut(V_\Lambda)$ is not a \gdh{} (see \autoref{def:gendeephole}) we show that it does not have type~$0$ (by computing the conformal weight $\rho(V_\Lambda(g))$, see \autoref{sec:autleech}), is not extremal or does not satisfy the rank condition~\eqref{item:gendeephole3}.

Sufficient and necessary criteria for extremality can be derived from the dimension formula (see, e.g., Proposition~5.9 in \cite{MS19}). For instance, if $g$ is extremal, then $\rho(V_\Lambda(g^i))\geq1$ for all $i\in\Z_n$ with $(i,n)=1$.

To prove that an automorphism does not satisfy the rank condition we show that the necessary criterion in Remark~3.8 of \cite{HM20} is not satisfied. A sufficient criterion is given in Proposition~3.7 of \cite{HM20}.

For two of the following automorphisms these criteria will be inconclusive. We then assume that the rank condition is fulfilled, which is equivalent to $(V_\Lambda^g)_1$ being a Cartan subalgebra of the semisimple Lie algebra $(V_\Lambda^{\orb(g)})_1$, so that the action of the zero modes of $(V_\Lambda^g)_1$ on $(V_\Lambda^{\orb(g)})_1$ yields the root space decomposition of $(V_\Lambda^{\orb(g)})_1$. The action of $(V_\Lambda^g)_1$ on the irreducible $g^i$-twisted $V_\Lambda$-module $V_\Lambda(g^i)$, $i\in\Z_n$, follows directly from the explicit construction in \cite{DL96,BK04} (see \autoref{sec:autleech}). We then consider the inclusions
\begin{equation*}
(V_\Lambda^g)_1\oplus\!\!\bigoplus_{\substack{i\in\Z_n\\(i,n)=1}}\!\!(V_\Lambda(g^i))_1\subseteq(V_\Lambda^{\orb(g)})_1\subseteq(V_\Lambda^g)_1\oplus\bigoplus_{\substack{i\in\Z_n\\i\neq0}}(V_\Lambda(g^i))_1,
\end{equation*}
which allow us to compute a sub- and superset of the root system of $(V_\Lambda^{\orb(g)})_1$, including the lengths with respect to the unique non-degenerate, invariant bilinear form $\langle\cdot,\cdot\rangle$ on $V_\Lambda^{\orb(g)}$ normalised such that $\langle\vac,\vac\rangle=-1$. This leads to a contradiction if the sub- or superset is not compatible with the affine structure.

We compute that in $\Aut(V_\Lambda)$:
\begin{enumerate}[leftmargin=*]
\item There is exactly one conjugacy class of order~$6$ projecting to the conjugacy class in $\O(\Lambda)$ with cycle shape $6^4$ but its type is not $0$.
\item There is exactly one conjugacy class of order~$4$ projecting to the conjugacy class in $\O(\Lambda)$ with cycle shape $4^6$ but its type is not $0$.
\item There are exactly six conjugacy classes of order~$8$ projecting to the conjugacy class in $\O(\Lambda)$ with cycle shape $4^6$. Five of these do not have type~$0$, and one has type~$0$ but does not satisfy the rank condition.
\item There is exactly one conjugacy class of order~$3$ projecting to the conjugacy class in $\O(\Lambda)$ with cycle shape $3^8$ but its type is not $0$.
\item There are exactly two conjugacy classes of order~$6$ projecting to the conjugacy class in $\O(\Lambda)$ with cycle shape $3^8$ but their types are not $0$.
\item There are exactly eight conjugacy classes of order~$12$ projecting to the conjugacy class in $\O(\Lambda)$ with cycle shape $3^8$ but their types are not $0$.
\item There are exactly three conjugacy classes of order~$4$ projecting to the conjugacy class in $\O(\Lambda)$ with cycle shape $2^44^4$. Two of these do not have type~$0$, and one has type~$0$ but does not satisfy the rank condition.
\item There are exactly $15$ conjugacy classes of order~$8$ projecting to the conjugacy class in $\O(\Lambda)$ with cycle shape $2^44^4$. $13$ of these do not have type~$0$, and two have type~$0$ but do not satisfy the rank condition.
\item There are exactly $13$ conjugacy classes of order~$6$ projecting to the conjugacy class in $\O(\Lambda)$ with cycle shape $1^22^23^26^2$. Nine of these do not have type~$0$, two have type~$0$ but do not satisfy the rank condition, and the two remaining ones are discussed below.\label{item:open}
\item There are exactly seven conjugacy classes of order~$4$ projecting to the conjugacy class in $\O(\Lambda)$ with cycle shape $2^{12}$. Five of these do not have type~$0$, and two have type~$0$ but do not satisfy the rank condition.
\item There are exactly $56$ conjugacy classes of order~$8$ projecting to the conjugacy class in $\O(\Lambda)$ with cycle shape $2^{12}$. $50$ of these do not have type~$0$, five have type~$0$ but do not satisfy the rank condition, and one further case has type~$0$ but is not extremal since $\rho(V_\Lambda(g))=7/8<1$.
\end{enumerate}
It remains to study the two remaining conjugacy classes of order~$6$ and type~$0$ projecting to the conjugacy class in $\O(\Lambda)$ with cycle shape $1^22^23^26^2$ from item~\eqref{item:open}.

The first conjugacy class satisfies the rank condition and is extremal. Hence, it is a \gdh{}. However, the action of $(V_\Lambda^g)_1$ on the twisted modules is not compatible with the affine structure $(V_\Lambda^{\orb(g)})_1\cong D_{4,6}B_{2,3}^2$. (Specifically, the action on $V_\Lambda(g)_1$ defines simple roots for an (affine) root system of type $\tilde{A}_5A_1$.) By exclusion, $(V_\Lambda^{\orb(g)})_1\cong A_{5,6}B_{2,3}A_{1,2}$, corresponding to case $8$ on Schellekens' list.

The second conjugacy class is also extremal. With a similar argument as before we see that $V_\Lambda^{\orb(g)}$ cannot have affine structure $D_{4,6}B_{2,3}^2$ (nor can it have affine structure $A_{5,6}B_{2,3}A_{1,2}$, and so it does not satisfy the rank condition).
\end{proof}

We remark that in this work we only determine the $69$ possible affine structures, i.e.\ the affine \vosa{}s $\langle V_1\rangle\cong\bigotimes_{i=1}^rL_{\hat\g_i}(k_i,0)$, that can occur in a \strathol{} \voa{} of central charge $24$. Schellekens, by contrast, also determined the possible decompositions of $V$ as a $\langle V_1\rangle$-module (see Table~1 in \cite{Sch93}). He obtains that this decomposition is unique up to an outer automorphism.

%%%%%%%%%%%%%%%%%%%%%%%%%%%%%%%
%%%%%%%%%%%%%%%%%%%%%%%%%%%%%%%

\section{Towards a Uniform Uniqueness Proof}\label{sec:uniqueness}
So far, the proof of the uniqueness of a \strathol{} \voa{} of central charge $24$ with a given non-zero weight-one space is scattered over many publications \cite{DM04,LS15a,LS19,KLL18,LL20,EMS20b,LS20,LS20b}.

A second potential application of \autoref{thm:main1} is a more uniform proof of this uniqueness statement. In this section we demonstrate this approach for $43$ of the $70$ non-zero weight-one Lie algebras on Schellekens' list. While we believe that this method works in principle for all cases, we are currently restricted by computational limitations.

Another uniform proof of the uniqueness statement is given in \cite{HM20} based on orbifold constructions not just associated with the Leech lattice but with all Niemeier lattices.

In the following we try to classify the \gdh{}s in $\Aut(V_\Lambda)$. If we can show that there is only one conjugacy class $g$ of \gdh{}s in $\Aut(V_\Lambda)$ whose corresponding orbifold construction has a certain weight-one Lie algebra $\g$ on Schellekens' list, then any \strathol{} \voa{} of central charge $24$ with $V_1\cong\g$ must be isomorphic to $V_\Lambda^{\orb(g)}$, proving the uniqueness for this $\g$.

\begin{thm}\label{thm:main3}
Let $V$ be a \strathol{} \voa{} of central charge $24$ with weight-one space $V_1\neq\{0\}$. Then the Lie algebra structure of $V_1$ uniquely determines the \voa{} $V$ up to isomorphism if $V_1$ is one of the $24$ Lie algebras of rank $24$ or corresponding to cases $2,\ldots,14$ and $16,\ldots,21$ on Schellekens' list. Moreover, there is a unique conjugacy class of \gdh{}s in $\Aut(V_\Lambda)$ such that $V_\Lambda^{\orb(g)}\cong V$ in these cases.
\end{thm}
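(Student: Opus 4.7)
The plan is to apply \autoref{thm:main1}, which guarantees that every \strathol{} \voa{} $V$ of central charge $24$ with $V_1\neq\{0\}$ arises as $V_\Lambda^{\orb(g)}$ for some \gdh{} $g\in\Aut(V_\Lambda)$, and two such \voa{}s are isomorphic whenever the corresponding \gdh{}s are conjugate in $\Aut(V_\Lambda)$ (since conjugation intertwines $g^i$-twisted modules in a way compatible with the orbifold). Consequently, the uniqueness of $V$ given $V_1$ reduces to showing that, for each of the $43$ Lie algebras in the statement, there is exactly one conjugacy class of \gdh{}s $g\in\Aut(V_\Lambda)$ with $(V_\Lambda^{\orb(g)})_1\cong V_1$.

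For each target Lie algebra $\g\cong V_1$ the argument proceeds in two stages. The $24$ rank-$24$ cases correspond to \voa{}s $V_N$ associated with Niemeier lattices, for which the associated \gdh{}s arise from the holy-construction deep holes of $\Lambda$ \cite{CPS82,CS82} lifted to the \voa{} level; their classification therefore reduces to the well-known classification of deep holes of the Leech lattice. For the remaining semisimple cases (cases $2$--$14$ and $16$--$21$), \autoref{lem:conditions} together with \autoref{table:69} pins down the unique conjugacy class $\nu\in\O(\Lambda)$ to which any such \gdh{} must project, as well as the required order $n=\lcm(\{l_ih_i^\vee\})$. \autoref{prop:autleech2} then enumerates the candidate conjugacy classes of order~$n$ projecting to $\nu$ via orbit representatives $f$ of the $s_\nu$-shifted action of $C_{\O(\Lambda)}(\nu)$ on $(\Lambda^\nu/n)/\pi_\nu(\Lambda)$. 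For each such $f$ the computation mirrors the proof of \autoref{thm:main2}: check type~$0$ via the conformal weights of the twisted modules, extremality via \autoref{prop:dimform}, and the rank condition via the criteria of \cite{HM20}; when all three are satisfied, read off the weight-one Lie algebra $(V_\Lambda^{\orb(g)})_1$ from the action of the Cartan subalgebra $(V_\Lambda^g)_1\cong\pi_\nu(\Lambda_\C)$ on $\bigoplus_i(V_\Lambda(g^i))_1$, sandwiching the root system of $(V_\Lambda^{\orb(g)})_1$ between the explicit sub- and supersets recalled in the proof of \autoref{thm:main2}. Carried out in \texttt{Magma} \cite{Magma}, this should leave exactly one surviving conjugacy class yielding the prescribed $V_1$.

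The main obstacle is purely computational. The orbit space $(\Lambda^\nu/n)/\pi_\nu(\Lambda)$ has cardinality $n^{\rk(\Lambda^\nu)}/|(\Lambda^\nu)'/\Lambda^\nu|$, which grows rapidly with $n$ and $\rk(\Lambda^\nu)$; for the $27$ remaining cases on Schellekens' list, which typically combine a large fixed-point rank with a large orbifold order, the enumeration of orbits and the subsequent twisted-module analysis exceed what is feasible on standard hardware. This is why the theorem is restricted to $43$ of the $70$ non-zero cases. A complete uniform uniqueness proof along these lines would require either substantially more efficient orbit-enumeration techniques or additional a priori reductions that eliminate most representatives $f$ before the full enumeration step.
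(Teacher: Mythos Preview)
Your proposal is correct and follows essentially the same approach as the paper: reduce uniqueness to classifying conjugacy classes of generalised deep holes via \autoref{thm:main1}, handle the rank-$24$ cases via the deep holes of $\Lambda$, and dispose of the remaining cases by computer enumeration using \autoref{prop:autleech2} together with the type-$0$, extremality and rank checks. The paper's rank-$24$ argument is slightly more explicit than yours (extremality forces $\rho(V_\Lambda(\sigma_h))\geq1$, and the covering-radius theorem then forces $h$ to be a deep hole), and for the surviving classes in the computer search the paper argues by exclusion together with the existence result of \cite{MS19} rather than reading off the root systems directly, but these are minor differences in presentation.
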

\begin{proof}
We begin with the case of $\rk(V_1)=24$. Then $V\cong V_\Lambda^{\orb(g)}$ for some \gdh{} $g\in\Aut(V_\Lambda)$ projecting to $\nu\in\O(\Lambda)$ with $\rk(\Lambda^\nu)=24$. This means that $\nu=\id$, i.e.\ $g$ is inner and conjugate to $\sigma_h=\e^{-(2\pi\i)h_0}$ for some $h\in\Lambda_\C/\Lambda$. Recall that $g=\id$ is a \gdh{} by convention and this is the unique \gdh{} $g$ such that $V_\Lambda^{\orb(g)}\cong V_\Lambda$. In the following we assume that $g\neq\id$ or equivalently that $V_1$ is semisimple.

By the dimension formula in \cite{MS19}, the extremality of $g$ implies $\rho(V_\Lambda(g))\geq 1$. On the other hand, for $d\in\Lambda_\C$, $\rho(V_\Lambda(\sigma_d))=\min_{\alpha\in\Lambda+d}\langle\alpha,\alpha\rangle/2$. Since the covering radius of the Leech lattice is $\sqrt{2}$ \cite{CPS82}, the conformal weight can be at most $1$ and it is $1$ by definition if and only if $d$ is a \emph{deep hole} of the Leech lattice.

Hence, $h$ must be a deep hole. The deep holes of the Leech lattice $\Lambda$ were classified in \cite{CPS82} and it is shown that there are exactly $23$ orbits of deep holes in $\Lambda_\C/\Lambda$ under the action of $\O(\Lambda)=C_{\O(\Lambda)}(\id)$. This shows that there are at most $24$ conjugacy classes of \gdh{}s in $\Aut(V_\Lambda)$ with $\rk(\Lambda^\nu)=24$, each one except for the identity represented by $\sigma_h$ where $h$ is a representative of a deep hole of $\Lambda$. It is not difficult to see that each of the $23$ deep holes in fact defines a \gdh{} in this way and based on \cite{CS82} it is proved in \cite{MS19} that the orbifold constructions associated with these $23$ \gdh{}s give exactly the \voa{}s associated with the $23$ Niemeier lattices other than the Leech lattice. This proves the uniqueness of the $24$ lattice cases on Schellekens' list, i.e.\ the cases with $\rk(V_1)=24$.

\medskip

For the weight-one Lie algebras of rank less than $24$ we proceed by classifying the corresponding \gdh{}s on the computer using \texttt{Magma} \cite{Magma} similar to the proof of \autoref{thm:main2}. Recall that it was shown in \cite{MS19} that there is at least one \gdh{} corresponding to each case on Schellekens' list.

We compute that in $\Aut(V_\Lambda)$:
\begin{enumerate}[leftmargin=*]
\item There are exactly six conjugacy classes of order~$10$ projecting to the conjugacy class in $\O(\Lambda)$ with cycle shape $2^210^2$. Four of these do not have type~$0$, and one has type~$0$ but does not satisfy the rank condition. The remaining class must be a \gdh{} corresponding to case~$4$ on Schellekens' list.
%%%%%
\item There are exactly four conjugacy classes of order~$6$ projecting to the conjugacy class in $\O(\Lambda)$ with cycle shape $2^36^3$. Two of these do not have type~$0$, and one has type~$0$ but does not satisfy the rank condition. The remaining class must be a \gdh{} corresponding to case~$3$ on Schellekens' list.
\item There are exactly $156$ conjugacy classes of order~$18$ projecting to the conjugacy class in $\O(\Lambda)$ with cycle shape $2^36^3$. $135$ of these do not have type~$0$, and four have type~$0$ but do not satisfy the rank condition. Of the remaining $17$ classes $16$ are not extremal. The remaining class must be a \gdh{} corresponding to case~$14$ on Schellekens' list.
%%%%%
\item There are exactly $25$ conjugacy classes of order~$8$ projecting to the conjugacy class in $\O(\Lambda)$ with cycle shape $1^22^14^18^2$. $21$ of these do not have type~$0$, and three have type~$0$ but do not satisfy the rank condition. The remaining class must be a \gdh{} corresponding to case~$10$ on Schellekens' list.
%%%%%
\item There are exactly eight conjugacy classes of order~$7$ projecting to the conjugacy class in $\O(\Lambda)$ with cycle shape $1^37^3$. Six of these do not have type~$0$, and one has type~$0$ but does not satisfy the rank condition. The remaining class must be a \gdh{} corresponding to case~$11$ on Schellekens' list.
%%%%%
\item As was already discussed in the proof of \autoref{thm:main2}, there are exactly $13$ conjugacy classes of order~$6$ projecting to the conjugacy class in $\O(\Lambda)$ with cycle shape $1^22^23^26^2$. Nine of these do not have type~$0$, and three have type~$0$ but do not satisfy the rank condition. The remaining class must be a \gdh{} corresponding to case~$8$ on Schellekens' list.
\item There are exactly $194$ conjugacy classes of order~$12$ projecting to the conjugacy class in $\O(\Lambda)$ with cycle shape $1^22^23^26^2$. $172$ of these do not have type~$0$, and eight have type~$0$ but do not satisfy the rank condition. Of the remaining $14$ classes $13$ are not extremal. The remaining class must be a \gdh{} corresponding to case~$21$ on Schellekens' list.
%%%%%
\item There are exactly six conjugacy classes of order~$5$ projecting to the conjugacy class in $\O(\Lambda)$ with cycle shape $1^45^4$. Four of these do not have type~$0$, and one has type~$0$ but does not satisfy the rank condition. The remaining class must be a \gdh{} corresponding to case~$9$ on Schellekens' list.
\item There are exactly $54$ conjugacy classes of order~$10$ projecting to the conjugacy class in $\O(\Lambda)$ with cycle shape $1^45^4$. $47$ of these do not have type~$0$, and one has type~$0$ but does not satisfy the rank condition. Of the remaining six classes five are not extremal. The remaining class must be a \gdh{} corresponding to case~$20$ on Schellekens' list.
%%%%%
\item There are exactly nine conjugacy classes of order~$4$ projecting to the conjugacy class in $\O(\Lambda)$ with cycle shape $1^42^24^4$. Six of these do not have type~$0$, and two have type~$0$ but do not satisfy the rank condition. The remaining class must be a \gdh{} corresponding to case~$7$ on Schellekens' list.
\item There are exactly $94$ conjugacy classes of order~$8$ projecting to the conjugacy class in $\O(\Lambda)$ with cycle shape $1^42^24^4$. $82$ of these do not have type~$0$, and four have type~$0$ but do not satisfy the rank condition. Of the remaining eight classes six are not extremal. The two remaining classes must be \gdh{}s corresponding to cases $18$ and $19$ on Schellekens' list.
%%%%%
\item There is exactly one conjugacy class of order~$2$ projecting to the conjugacy class in $\O(\Lambda)$ with cycle shape $2^{12}$. This class must be a \gdh{} corresponding to case~$2$ on Schellekens' list.
\item There are exactly $14$ conjugacy classes of order~$6$ projecting to the conjugacy class in $\O(\Lambda)$ with cycle shape $2^{12}$. Eight of these do not have type~$0$, four have type~$0$ and satisfy the rank condition but are not extremal. The two remaining classes must be \gdh{}s corresponding to cases $12$ and $13$ on Schellekens' list.
%%%%%
\item There are exactly four conjugacy classes of order~$3$ projecting to the conjugacy class in $\O(\Lambda)$ with cycle shape $1^63^6$. Two of these do not have type~$0$, and one has type~$0$ but does not satisfy the rank condition. The remaining class must be a \gdh{} corresponding to case~$6$ on Schellekens' list.
\item There are exactly $24$ conjugacy classes of order~$6$ projecting to the conjugacy class in $\O(\Lambda)$ with cycle shape $1^63^6$. $18$ of these do not have type~$0$, and one has type~$0$ but does not satisfy the rank condition. Of the remaining five classes four are not extremal. The remaining class must be a \gdh{} corresponding to case~$17$ on Schellekens' list.
%%%%%
\item There are exactly three conjugacy classes of order~$2$ projecting to the conjugacy class in $\O(\Lambda)$ with cycle shape $1^82^8$. One of these does not have type~$0$, and one has type~$0$ but does not satisfy the rank condition. The remaining class must be a \gdh{} corresponding to case~$5$ on Schellekens' list.
\item There are exactly $14$ conjugacy classes of order~$4$ projecting to the conjugacy class in $\O(\Lambda)$ with cycle shape $1^82^8$. Nine of these do not have type~$0$, and one has type~$0$ but does not satisfy the rank condition. Of the remaining four classes three are not extremal. The remaining class must be a \gdh{} corresponding to case~$16$ on Schellekens' list.\qedhere
\end{enumerate}
\end{proof}
For the other cases on Schellekens' list determining the conjugacy classes in $\Aut(V_\Lambda)$ and specifically the orbit representatives of the action of $C_{\O(\Lambda)}(\nu)$ on $(\Lambda^\nu/n)/\pi_{\nu}(\Lambda)$ takes prohibitively long on a standard desktop computer.

%%%%%%%%%%%%%%%%%%%%%%%%%%%%%%%
%%%%%%%%%%%%%%%%%%%%%%%%%%%%%%%

\newpage
\begin{appendix}
\section{Tables}
\begin{table}[ht]
\[
\begin{array}{ccrrllr}
\nu\in\O(\Lambda) & \text{Rk.} & n & \text{Dim.} & \text{Aff.\ Struct.} & \text{\cite{Hoe17}} & \text{\cite{Sch93}} \\ \hline\hline
\multirow{24}{*}{$1^{24}$} & \multirow{24}{*}{$24$}
  & 46 & 1128 & D_{24,1}                   & A1  & [70] \\ \cline{3-7}
& & 30 &  744 & D_{16,1}E_{8,1}            & A2  & [69] \\ \cline{3-7}
& & 30 &  744 & E_{8,1}^3                  & A3  & [68] \\ \cline{3-7}
& & 25 &  624 & A_{24,1}                   & A4  & [67] \\ \cline{3-7}
& & 22 &  552 & D_{12,1}^2                 & A5  & [66] \\ \cline{3-7}
& & 18 &  456 & A_{17,1}E_{7,1}            & A6  & [65] \\ \cline{3-7}
& & 18 &  456 & D_{10,1}E_{7,1}^2          & A7  & [64] \\ \cline{3-7}
& & 16 &  408 & A_{15,1}D_{9,1}            & A8  & [63] \\ \cline{3-7}
& & 14 &  360 & D_{8,1}^3                  & A9  & [61] \\ \cline{3-7}
& & 13 &  336 & A_{12,1}^2                 & A10 & [60] \\ \cline{3-7}
& & 12 &  312 & A_{11,1}D_{7,1}E_{6,1}     & A11 & [59] \\ \cline{3-7}
& & 12 &  312 & E_{6,1}^4                  & A12 & [58] \\ \cline{3-7}
& & 10 &  264 & A_{9,1}^2D_{6,1}           & A13 & [55] \\ \cline{3-7}
& & 10 &  264 & D_{6,1}^4                  & A14 & [54] \\ \cline{3-7}
& &  9 &  240 & A_{8,1}^3                  & A15 & [51] \\ \cline{3-7}
& &  8 &  216 & A_{7,1}^2D_{5,1}^2         & A16 & [49] \\ \cline{3-7}
& &  7 &  192 & A_{6,1}^4                  & A17 & [46] \\ \cline{3-7}
& &  6 &  168 & A_{5,1}^4D_{4,1}           & A18 & [43] \\ \cline{3-7}
& &  6 &  168 & D_{4,1}^6                  & A19 & [42] \\ \cline{3-7}
& &  5 &  144 & A_{4,1}^6                  & A20 & [37] \\ \cline{3-7}
& &  4 &  120 & A_{3,1}^8                  & A21 & [30] \\ \cline{3-7}
& &  3 &   96 & A_{2,1}^{12}               & A22 & [24] \\ \cline{3-7}
& &  2 &   72 & A_{1,1}^{24}               & A23 & [15] \\ \cline{3-7}
& &  1 &   24 & \C^{24}                    & A24 &  [1] \\ \hline\hline
%%%
\multirow{8}{*}{$1^82^8$} & \multirow{8}{*}{$16$}
  & 30 &  384 & E_{8,2}B_{8,1}             & B1  & [62] \\ \cline{3-7}
& & 22 &  288 & C_{10,1}B_{6,1}            & B2  & [56] \\ \cline{3-7}
& & 18 &  240 & C_{8,1}F_{4,1}^2           & B3  & [52] \\ \cline{3-7}
& & 18 &  240 & E_{7,2}B_{5,1}F_{4,1}      & B4  & [53] \\ \cline{3-7}
& & 16 &  216 & D_{9,2}A_{7,1}             & B5  & [50] \\ \cline{3-7}
& & 14 &  192 & D_{8,2}B_{4,1}^2           & B6  & [47] \\ \cline{3-7}
& & 14 &  192 & C_{6,1}^2B_{4,1}           & B7  & [48] \\ \cline{3-7}
& & 12 &  168 & E_{6,2}C_{5,1}A_{5,1}      & B8  & [44]
\end{array}
\]
\caption{The $69$ \strathol{} \voa{}s $V$ of central charge $24$ with $V_1$ semisimple and the corresponding \gdh{}s in $\nu\in\Aut(V_\Lambda)$ (continued on next page).}
\label{table:69}
\end{table}
\addtocounter{table}{-1}
\begin{table}[ht]
\[
\begin{array}{ccrrllr}
\nu\in\O(\Lambda)& \text{Rk.} & n & \text{Dim.} & \text{Aff.\ Struct.} & \text{\cite{Hoe17}} & \text{\cite{Sch93}} \\ \hline\hline
\multirow{9}{*}{$1^82^8$} & \multirow{9}{*}{$16$}
  & 10 &  144 & A_{9,2}A_{4,1}B_{3,1}      & B9  & [40] \\ \cline{3-7}
& & 10 &  144 & D_{6,2}C_{4,1}B_{3,1}^2    & B10 & [39] \\ \cline{3-7}
& & 10 &  144 & C_{4,1}^4                  & B11 & [38] \\ \cline{3-7}
& &  8 &  120 & A_{7,2}C_{3,1}^2A_{3,1}    & B12 & [33] \\ \cline{3-7}
& &  8 &  120 & D_{5,2}^2A_{3,1}^2         & B13 & [31] \\ \cline{3-7}
& &  6 &   96 & A_{5,2}^2B_{2,1}A_{2,1}^2  & B14 & [26] \\ \cline{3-7}
& &  6 &   96 & D_{4,2}^2B_{2,1}^4         & B15 & [25] \\ \cline{3-7}
& &  4 &   72 & A_{3,2}^4A_{1,1}^4         & B16 & [16] \\ \cline{3-7}
& &  2 &   48 & A_{1,2}^{16}               & B17 &  [5] \\ \hline\hline
%%%%
\multirow{6}{*}{$1^63^6$} & \multirow{6}{*}{$12$}
  & 18 &  168 & E_{7,3}A_{5,1}             & C1  & [45] \\ \cline{3-7}
& & 12 &  120 & D_{7,3}A_{3,1}G_{2,1}      & C2  & [34] \\ \cline{3-7}
& & 12 &  120 & E_{6,3}G_{2,1}^3           & C3  & [32] \\ \cline{3-7}
& &  9 &   96 & A_{8,3}A_{2,1}^2           & C4  & [27] \\ \cline{3-7}
& &  6 &   72 & A_{5,3}D_{4,3}A_{1,1}^3    & C5  & [17] \\ \cline{3-7}
& &  3 &   48 & A_{2,3}^{6}                & C6  &  [6] \\ \hline\hline
%%%%
\multirow{9}{*}{$2^{12}$} & \multirow{9}{*}{$12$}
  & 46 &  300 & B_{12,2}                   & D1a & [57] \\
& & 22 &  156 & B_{6,2}^2                  & D1b & [41] \\
& & 14 &  108 & B_{4,2}^3                  & D1c & [29] \\
& & 10 &   84 & B_{3,2}^4                  & D1d & [23] \\
& &  6 &   60 & B_{2,2}^6                  & D1e & [12] \\
& &  2 &   36 & A_{1,4}^{12}               & D1f &  [2] \\ \cline{3-7}
& & 18 &  132 & A_{8,2}F_{4,2}             & D2a & [36] \\
& & 10 &   84 & C_{4,2}A_{4,2}^2           & D2b & [22] \\
& &  6 &   60 & D_{4,4}A_{2,2}^4           & D2c & [13] \\ \hline\hline
%%%%
\multirow{5}{*}{$1^42^24^4$} & \multirow{5}{*}{$10$}
  & 16 &  120 & C_{7,2}A_{3,1}             & E1  & [35] \\ \cline{3-7}
& & 12 &   96 & E_{6,4}B_{2,1}A_{2,1}      & E2  & [28] \\ \cline{3-7}
& &  8 &   72 & A_{7,4}A_{1,1}^3           & E3  & [18] \\ \cline{3-7}
& &  8 &   72 & D_{5,4}C_{3,2}A_{1,1}^2    & E4  & [19] \\ \cline{3-7}
& &  4 &   48 & A_{3,4}^3A_{1,2}           & E5  &  [7] \\ \hline\hline
%%%%
\multirow{2}{*}{$1^45^4$} & \multirow{2}{*}{$8$}
  & 10 &   72 & D_{6,5}A_{1,1}^2           & F1  & [20] \\ \cline{3-7}
& &  5 &   48 & A_{4,5}^2                  & F2  &  [9] \\ \hline\hline
%%%%
\multirow{2}{*}{$1^22^23^26^2$} & \multirow{2}{*}{$8$}
  & 12 &   72 & C_{5,3}G_{2,2}A_{1,1}      & G1  & [21] \\ \cline{3-7}
& &  6 &   48 & A_{5,6}B_{2,3}A_{1,2}      & G2  &  [8] \\ \hline\hline
%%%%
\multirow{1}{*}{$1^37^3$} & \multirow{1}{*}{$6$}
  &  7 &   48 &  A_{6,7}                   & H1  & [11] \\ \hline\hline
%%%%
\multirow{1}{*}{$1^22^14^18^2$} & \multirow{1}{*}{$6$}
  &  8 &   48 & D_{5,8}A_{1,2}             & I1  & [10] \\ \hline\hline
%%%%
\multirow{2}{*}{$2^36^3$} & \multirow{2}{*}{$6$}
  & 18 &   60 & F_{4,6}A_{2,2}             & J1a & [14] \\
& &  6 &   36 & D_{4,12}A_{2,6}            & J1b &  [3] \\ \hline\hline
%%%%
\multirow{1}{*}{$2^210^2$} & \multirow{1}{*}{$4$}
  & 10 &   36 & C_{4,10}                   & K1  &  [4]
\end{array}
\]
\caption{(continued)}
\end{table}

\begin{table}[ht]
\begin{minipage}[t]{0.303\textwidth}
\[
\begin{array}{r|l}
\text{Dim.} & \text{Aff.\ Struct.}\\\hline\hline
25 & A_{3,96}B_{2,72} \\
25 & G_{2,96}A_{2,72}A_{1,48} \\
25 & B_{2,72}A_{1,48}^5 \\
25 & A_{2,72}^2A_{1,48}^3 \\\hline
26 & A_{3,48}A_{2,36}A_{1,24} \\
26 & G_{2,48}A_{1,24}^4 \\
26 & B_{2,36}^2A_{1,24}^2 \\
26 & B_{2,36}A_{2,36}^2 \\
26 & A_{2,36}A_{1,24}^6 \\\hline
27 & A_{4,40}A_{1,16} \\
27 & B_{3,40}A_{1,16}^2 \\
27 & C_{3,32}A_{1,16}^2 \\
27 & A_{3,32}A_{1,16}^4 \\
27 & G_{2,32}B_{2,24}A_{1,16} \\
27 & B_{2,24}A_{2,24}A_{1,16}^3 \\
27 & A_{2,24}^3A_{1,16} \\
27 & A_{1,16}^9 \\\hline
28 & D_{4,36} \\
28 & A_{3,24}B_{2,18}A_{1,12} \\
28 & G_{2,24}^2 \\
28 & G_{2,24}A_{2,18}A_{1,12}^2 \\
28 & B_{2,18}^2A_{2,18} \\
28 & B_{2,18}A_{1,12}^6 \\
28 & A_{2,18}^2A_{1,12}^4 \\\hline
30 & A_{4,20}A_{1,8}^2 \\
30 & B_{3,20}A_{1,8}^3 \\
30 & C_{3,16}A_{1,8}^3 \\
30 & A_{3,16}^2 \\
30 & A_{3,16}A_{1,8}^5 \\
30 & G_{2,16}B_{2,12}A_{1,8}^2 \\
30 & G_{2,16}A_{2,12}^2 \\
30 & B_{2,12}^3 \\
30 & B_{2,12}A_{2,12}A_{1,8}^4 \\
30 & A_{2,12}^3A_{1,8}^2 \\
30 & A_{1,8}^{10} \\\hline
32 & A_{4,15}A_{2,9} \\
32 & B_{3,15}A_{2,9}A_{1,6} \\
32 & C_{3,12}A_{2,9}A_{1,6} \\
32 & A_{3,12}G_{2,12}A_{1,6} \\
\end{array}
\]
\end{minipage}
\begin{minipage}[t]{0.34\textwidth}
\[
\begin{array}{r|l}
\text{Dim.} & \text{Aff.\ Struct.}\\\hline\hline
32 & A_{3,12}A_{2,9}A_{1,6}^3 \\
32 & G_{2,12}B_{2,9}A_{2,9} \\
32 & G_{2,12}A_{1,6}^6 \\
32 & B_{2,9}^2A_{1,6}^4 \\
32 & B_{2,9}A_{2,9}^2A_{1,6}^2 \\
32 & A_{2,9}^4 \\
32 & A_{2,9}A_{1,6}^8 \\\hline
36 & B_{4,14} \\
36 & C_{4,10} \\
36 & D_{4,12}A_{2,6} \\
36 & A_{4,10}A_{1,4}^4 \\
36 & B_{3,10}A_{3,8} \\
36 & C_{3,8}A_{3,8} \\
36 & B_{3,10}A_{1,4}^5 \\
36 & C_{3,8}A_{1,4}^5 \\
36 & A_{3,8}^2A_{1,4}^2 \\
36 & A_{3,8}B_{2,6}A_{2,6}A_{1,4} \\
36 & A_{3,8}A_{1,4}^7 \\
36 & G_{2,8}^2A_{2,6} \\
36 & G_{2,8}B_{2,6}A_{1,4}^4 \\
36 & G_{2,8}A_{2,6}^2A_{1,4}^2 \\
36 & B_{2,6}^3A_{1,4}^2 \\
36 & B_{2,6}^2A_{2,6}^2 \\
36 & B_{2,6}A_{2,6}A_{1,4}^6 \\
36 & A_{2,6}^3A_{1,4}^4 \\
36 & A_{1,4}^{12} \\\hline
40 & D_{4,9}A_{1,3}^4 \\
40 & G_{2,6}^2A_{1,3}^4 \\\hline
42 & B_{2,4}A_{2,4}^4 \\\hline
48 & A_{6,7} \\
48 & D_{5,8}A_{1,2} \\
48 & B_{4,7}A_{1,2}^4 \\
48 & C_{4,5}A_{1,2}^4 \\
48 & A_{5,6}B_{2,3}A_{1,2} \\
48 & D_{4,6}G_{2,4}A_{1,2}^2 \\
48 & D_{4,6}B_{2,3}^2 \\
48 & D_{4,6}A_{2,3}A_{1,2}^4 \\
48 & A_{4,5}^2 \\
48 & A_{4,5}B_{3,5}A_{1,2} \\
\end{array}
\]
\end{minipage}
\begin{minipage}[t]{0.32\textwidth}
\[
\begin{array}{r|l}
\text{Dim.} & \text{Aff.\ Struct.}\\\hline\hline
48 & A_{4,5}C_{3,4}A_{1,2} \\
48 & A_{4,5}A_{3,4}A_{1,2}^3 \\
48 & A_{4,5}G_{2,4}B_{2,3} \\
48 & A_{4,5}B_{2,3}A_{2,3}A_{1,2}^2 \\
48 & A_{4,5}A_{2,3}^3 \\
48 & A_{4,5}A_{1,2}^8 \\
48 & B_{3,5}^2A_{1,2}^2 \\
48 & B_{3,5}C_{3,4}A_{1,2}^2 \\
48 & C_{3,4}^2A_{1,2}^2 \\
48 & B_{3,5}A_{3,4}A_{1,2}^4 \\
48 & C_{3,4}A_{3,4}A_{1,2}^4 \\
48 & B_{3,5}G_{2,4}B_{2,3}A_{1,2} \\
48 & C_{3,4}G_{2,4}B_{2,3}A_{1,2} \\
48 & B_{3,5}B_{2,3}A_{2,3}A_{1,2}^3 \\
48 & C_{3,4}B_{2,3}A_{2,3}A_{1,2}^3 \\
48 & B_{3,5}A_{2,3}^3A_{1,2} \\
48 & C_{3,4}A_{2,3}^3A_{1,2} \\
48 & B_{3,5}A_{1,2}^9 \\
48 & C_{3,4}A_{1,2}^9 \\
48 & A_{3,4}^3A_{1,2} \\
48 & A_{3,4}^2B_{2,3}A_{2,3} \\
48 & A_{3,4}^2A_{1,2}^6 \\
48 & A_{3,4}G_{2,4}B_{2,3}A_{1,2}^3 \\
48 & A_{3,4}G_{2,4}A_{2,3}^2A_{1,2} \\
48 & A_{3,4}B_{2,3}^3A_{1,2} \\
48 & A_{3,4}B_{2,3}A_{2,3}A_{1,2}^5 \\
48 & A_{3,4}A_{2,3}^3A_{1,2}^3 \\
48 & A_{3,4}A_{1,2}^{11} \\
48 & G_{2,4}^3A_{1,2}^2 \\
48 & G_{2,4}^2B_{2,3}^2 \\
48 & G_{2,4}^2A_{2,3}A_{1,2}^4 \\
48 & G_{2,4}B_{2,3}^2A_{2,3}A_{1,2}^2 \\
48 & G_{2,4}B_{2,3}A_{2,3}^3 \\
48 & G_{2,4}B_{2,3}A_{1,2}^8 \\
48 & G_{2,4}A_{2,3}^2A_{1,2}^6 \\
48 & B_{2,3}^4A_{2,3} \\
48 & B_{2,3}^3A_{1,2}^6 \\
48 & B_{2,3}^2A_{2,3}^2A_{1,2}^4 \\
48 & B_{2,3}A_{2,3}^4A_{1,2}^2 \\
\end{array}
\]
\end{minipage}
\bigskip
\caption{The 221 solutions of equation~\eqref{eq:221} (continued on next page).}
\label{table:221}
\end{table}
\addtocounter{table}{-1}
\begin{table}[ht]
\begin{minipage}[t]{0.36\textwidth}
\[
\begin{array}{r|l}
\text{Dim.} & \text{Aff.\ Struct.}\\\hline\hline
48 & B_{2,3}A_{2,3}A_{1,2}^{10} \\
48 & A_{2,3}^6 \\
48 & A_{2,3}^3A_{1,2}^8 \\
48 & A_{1,2}^{16} \\\hline
56 & C_{3,3}^2G_{2,3} \\
56 & G_{2,3}^4 \\\hline
60 & F_{4,6}A_{2,2} \\
60 & D_{4,4}A_{2,2}^4 \\
60 & B_{2,2}^6 \\
60 & B_{2,2}^2A_{2,2}^5 \\\hline
72 & D_{6,5}A_{1,1}^2 \\
72 & A_{7,4}A_{1,1}^3 \\
72 & C_{5,3}G_{2,2}A_{1,1} \\
72 & D_{5,4}C_{3,2}A_{1,1}^2 \\
72 & D_{5,4}A_{3,2}A_{1,1}^4 \\
72 & D_{5,4}A_{1,1}^9 \\
72 & A_{5,3}D_{4,3}A_{1,1}^3 \\
72 & A_{5,3}G_{2,2}^2A_{1,1}^3 \\
72 & D_{4,3}C_{3,2}G_{2,2}A_{1,1}^3 \\
72 & D_{4,3}A_{3,2}^2G_{2,2} \\
72 & D_{4,3}A_{3,2}G_{2,2}A_{1,1}^5 \\
72 & D_{4,3}G_{2,2}A_{1,1}^{10} \\
72 & C_{3,2}^3A_{1,1}^3 \\
72 & C_{3,2}^2A_{3,2}^2 \\
72 & C_{3,2}^2A_{3,2}A_{1,1}^5 \\
72 & C_{3,2}^2A_{1,1}^{10} \\
72 & C_{3,2}A_{3,2}^3A_{1,1}^2 \\
72 & C_{3,2}A_{3,2}^2A_{1,1}^7 \\
72 & C_{3,2}A_{3,2}A_{1,1}^{12} \\
72 & C_{3,2}G_{2,2}^3A_{1,1}^3 \\
72 & C_{3,2}A_{1,1}^{17} \\
72 & A_{3,2}^4A_{1,1}^4 \\
72 & A_{3,2}^3A_{1,1}^9 \\
72 & A_{3,2}^2G_{2,2}^3 \\
72 & A_{3,2}^2A_{1,1}^{14} \\
72 & A_{3,2}G_{2,2}^3A_{1,1}^5 \\
72 & A_{3,2}A_{1,1}^{19} \\
72 & G_{2,2}^3A_{1,1}^{10} \\
72 & A_{1,1}^{24} \\%\hline
\end{array}
\]
\end{minipage}
\begin{minipage}[t]{0.30\textwidth}
\[
\begin{array}{r|l}
\text{Dim.} & \text{Aff.\ Struct.}\\\hline\hline
84 & C_{4,2}A_{4,2}^2 \\
84 & B_{3,2}^4 \\\hline
96 & A_{8,3}A_{2,1}^2 \\
96 & E_{6,4}B_{2,1}A_{2,1} \\
96 & F_{4,3}D_{4,2}A_{2,1}^2 \\
96 & F_{4,3}B_{2,1}^2A_{2,1}^3 \\
96 & A_{5,2}^2B_{2,1}A_{2,1}^2 \\
96 & D_{4,2}^2B_{2,1}^3 \\
96 & D_{4,2}^2A_{2,1}^5 \\
96 & D_{4,2}B_{2,1}^6A_{2,1} \\
96 & D_{4,2}B_{2,1}^2A_{2,1}^6 \\
96 & B_{2,1}^8A_{2,1}^2 \\
96 & B_{2,1}^4A_{2,1}^7 \\
96 & A_{2,1}^{12} \\\hline
108 & B_{4,2}^3 \\\hline
120 & C_{7,2}A_{3,1} \\
120 & D_{7,3}A_{3,1}G_{2,1} \\
120 & E_{6,3}C_{3,1}^2 \\
120 & E_{6,3}G_{2,1}^3 \\
120 & A_{7,2}C_{3,1}^2A_{3,1} \\
120 & A_{7,2}A_{3,1}G_{2,1}^3 \\
120 & D_{5,2}^2A_{3,1}^2 \\
120 & D_{5,2}A_{3,1}^5 \\
120 & C_{3,1}^5A_{3,1} \\
120 & C_{3,1}^3A_{3,1}G_{2,1}^3 \\
120 & C_{3,1}A_{3,1}G_{2,1}^6 \\
120 & A_{3,1}^8 \\\hline
132 & A_{8,2}F_{4,2} \\\hline
144 & A_{9,2}A_{4,1}B_{3,1} \\
144 & D_{6,2}C_{4,1}B_{3,1}^2 \\
144 & C_{4,1}^4 \\
144 & C_{4,1}^2A_{4,1}^3 \\
144 & C_{4,1}A_{4,1}B_{3,1}^4 \\
144 & A_{4,1}^6 \\\hline
156 & B_{6,2}^2 \\\hline
168 & E_{7,3}A_{5,1} \\
168 & E_{6,2}C_{5,1}A_{5,1} \\
168 & A_{5,1}^4D_{4,1} \\
168 & D_{4,1}^6 \\%\hline
\end{array}
\]
\end{minipage}
\begin{minipage}[t]{0.32\textwidth}
\[
\begin{array}{r|l}
\text{Dim.} & \text{Aff.\ Struct.}\\\hline\hline
192 & D_{8,2}B_{4,1}^2 \\
192 & C_{6,1}^2B_{4,1} \\
192 & A_{6,1}^4 \\
192 & A_{6,1}B_{4,1}^4 \\\hline
216 & D_{9,2}A_{7,1} \\
216 & A_{7,1}^2D_{5,1}^2 \\\hline
240 & C_{8,1}F_{4,1}^2 \\
240 & E_{7,2}B_{5,1}F_{4,1} \\
240 & A_{8,1}^3 \\\hline
264 & A_{9,1}^2D_{6,1} \\
264 & D_{6,1}^4 \\\hline
288 & C_{10,1}B_{6,1} \\\hline
300 & B_{12,2} \\\hline
312 & A_{11,1}D_{7,1}E_{6,1} \\
312 & E_{6,1}^4 \\\hline
336 & A_{12,1}^2 \\\hline
360 & D_{8,1}^3 \\\hline
384 & E_{8,2}B_{8,1} \\\hline
408 & A_{15,1}D_{9,1} \\\hline
456 & A_{17,1}E_{7,1} \\
456 & D_{10,1}E_{7,1}^2 \\\hline
552 & D_{12,1}^2 \\\hline
624 & A_{24,1} \\\hline
744 & D_{16,1}E_{8,1} \\
744 & E_{8,1}^3 \\\hline
1128 & D_{24,1}
\end{array}
\]
\end{minipage}
\bigskip
\caption{(continued)}
\end{table}
\end{appendix}

\FloatBarrier

%%%%%%%%%%%%%%%%%%%%%%%%%%%%%%%
%%%%%%%%%%%%%%%%%%%%%%%%%%%%%%%

\bibliographystyle{alpha_noseriescomma}
\bibliography{quellen}{}

\end{document}